\theoremstyle{plain}
\newtheorem{theorem}{Theorem}[section]
\newtheorem{lemma}[theorem]{Lemma}
\newtheorem{corollary}[theorem]{Corollary}
\newtheorem{proposition}[theorem]{Proposition}
\newtheorem{fact}[theorem]{Fact}
\theoremstyle{definition}
\newtheorem{question}[theorem]{Question}
\newtheorem{definition}[theorem]{Definition}
\newtheorem{remark}[theorem]{Remark}
\newtheorem{example}[theorem]{Example}
\def\cf{\mathrm{cf}}
\begin{document}

\title[Continuous weak selections on products and filter spaces]{The existence of continuous weak selections and 
orderability-type 
properties in products and filter spaces}
\author[K. Motooka]{Koichi Motooka}
\address{Graduate School of Science and Engineering, 
	Ehime University,
	Matsuyama, 790-8577, Japan}
\email{k-motooka0426@outlook.jp}
\author[D. Shakhmatov]{Dmitri Shakhmatov}
\email{dmitri.shakhmatov@ehime-u.ac.jp}
\author[T. Yamauchi]{Takamitsu Yamauchi}
\email{yamauchi.takamitsu.ts@ehime-u.ac.jp}

\thanks{{\em Correspondent author\/}: Dmitri Shakhmatov, tel: (81) 89 927-9558, fax: (81) 89 927-9560}

\keywords{continuous weak selection, weakly orderable space, suborderable space, orderable space, product, space with a single non-isolated point}
\subjclass[2010]{Primary: 54C65; Secondary: 54A20, 54B10, 54B20, 54F05}

\thanks{The second listed author was partially supported by the Grant-in-Aid for Scientific Research~(C) No.~26400091 by the Japan Society for the Promotion of Science (JSPS), and the third listed author was supported by JSPS KAKENHI Grant number 26800040}

\begin{abstract}
Orderability, weak orderability and the existence of continuous weak selections on spaces with 
a single
non-isolated point 
and their products are discussed.
We 
prove that a closed continuous image $X$ of a suborderable space must be hereditarily paracompact provided that its product $X\times Y$ with some non-discrete space $Y$ has a separately continuous weak selection.
\end{abstract}

\maketitle	

\section{Introduction}

Throughout this paper, {\em all spaces are assumed to be Hausdorff topological spaces\/}.

Let $X$ be a space and 
$
\mathcal{F}_{2}(X)=\{F\subset X:1\leq|F|\leq2\}
$,
where $|F|$ is the cardinality of $F$.
We always consider $\mathcal{F}_{2}(X)$
with the Vietoris topology
generated by the base consisting of all sets of the form
\begin{align*}
\langle\mathcal{V}\rangle=\{S\in \mathcal{F}_{2}(X):S\subset \bigcup\mathcal{V}\text{ and }S\cap V\neq\emptyset\text{ for each }V\in \mathcal{V}\},
\end{align*}
where $\mathcal{V}$ runs over all finite families of open subsets of $X$. (It suffices to assume that $|\mathcal{V}|\le 2$ here.)
A function $\sigma :\mathcal{F}_{2}(X) \rightarrow X$ is called a \textit{weak selection\/} on $X$ if $\sigma (F)\in F$ for every $F\in \mathcal{F}_{2}(X)$.
A weak selection on a space $X$ is {\em continuous\/} if it is continuous with respect to the Vietoris topology on its domain $\mathcal{F}_{2}(X)$ and the original topology on the range $X$.

A relation $\preceq$ on $X$ is:
\begin{itemize}
\item  {\em total\/} if
$x\preceq y$ or $y\preceq x$ for every $x, y \in X$,
\item {\em antisymmetric\/} if $x,y\in X$, $x\preceq y$ and $y\preceq x$
imply $x=y$,
\item
{\em transitive\/} if $x,y,z\in X$, $x\preceq y$ and $y\preceq z$ imply $x\preceq z$.
\end{itemize}

 A relation $\preceq$ on $X$ satisfying all three conditions above is called a {\em linear order\/} on $X$. 
A space $(X,\tau)$ is \textit{orderable} (respectively,  \textit{weakly orderable} \cite{mill:1981}) if $\tau_{\preceq}=\tau$ (respectively, $\tau_{\preceq}\subset \tau$)
for some linear order $\preceq$ on $X$. 
A space $X$ is \textit{suborderable} if it is a subspace of some orderable space.
Obviously, every orderable space is suborderable, and every suborderable space is weakly orderable.
The converse implications do not hold in general.

Every weak selection $\sigma :\mathcal{F}_{2}(X) \rightarrow X$
 determines 
 the relation $\preceq_{\sigma}$ on $X$ 
 defined by letting $x\preceq_{\sigma}y$ if and only if  $\sigma (\{x,y\})=x$
 for $\{x,y\}\in \mathcal{F}_{2}(X)$.
This relation 
is both total and antisymmetric, but it could fail to be transitive.
 A total and antisymmetric relation $\preceq$ is called a \emph{selection relation\/} because the conjunction of these two conditions is equivalent to the equality $\preceq =\preceq_\sigma$ for some (unique) weak selection $\sigma$ on $X$ \cite{gutev-nogura:2005}.

Let
$\preceq$ be a selection relation on $X$.
For $x,y\in X$, the notion $x\prec y$ means that $x\preceq y$ and $x\neq y$.
For each $x, y\in X$, we define
\begin{align*}
(\leftarrow,x)_{\preceq}=\{z\in X:z\prec_{\sigma}x\}
&\text{, }
(x,\rightarrow)_{\preceq}=\{z\in X:x\prec_{\sigma}z\}
\text{, }
\\
(\leftarrow,x]_{\preceq}=\{z\in X:z\preceq_{\sigma}x\}
&\text{, }
[x,\rightarrow)_{\preceq}=\{z\in X:x\preceq_{\sigma}z\}
\text{, }
\\
(x,y)_{\preceq}=(x,\rightarrow)_{\preceq} \cap (\leftarrow,y)_{\preceq}
&\text{, }
(x,y]_{\preceq}=(x,\rightarrow)_{\preceq} \cap (\leftarrow,y]_{\preceq}\text{, and so on.}
\end{align*}

For every selection relation $\preceq$ on $X$,  
the family $\{ (\leftarrow,x)_{\preceq}:x\in X\}\cup\{ (x,\rightarrow)_{\preceq} : x \in X \}$ of half-intervals generates a topology on $X$ which we shall denote by $\tau_\preceq$.
The
topology $\tau_{\preceq_{\sigma}}$ on $X$ 
is known as a \textit{selection topology\/} determined by $\sigma$ \cite{gutev-nogura:2001}. 

If $\tau$ is a topology on a set $X$, then a weak selection $\sigma$ on a topological space $(X,\tau)$ is called \textit{separately continuous\/} provided that $\tau_{\preceq_{\sigma}}\subset \tau$
\cite{gutev-nogura:2010}.
Every continuous weak selection is separately continuous, 
but
the converse does not hold in general (\cite[Example 1.21]{artico-marconi-pelant-rotter-tkachenko:2002}, \cite[Example 4.3]{gutev-nogura:2010}).
However, to our knowledge, it is unknown whether there is a space with a separately continuous weak selection which does not admit a continuous one.

It is well known that every weakly orderable space has a continuous weak selection \cite[Lemma 7.5.1]{micheal:1951}.
It was proved in \cite[Theorem 2.7]{hrusak-martinez-ruiz:2009} that there exists a space with a continuous weak selection which is not weakly orderable.
	
The above results can be summarized as follows:

\begin{equation}
\label{four:properties}
\arraycolsep = .5mm
\begin{array}{rl}
\text{orderable}\to\text{suborderable}
&\to \text{weakly orderable}\\ 
&\to \text{admits a continuous weak selection}\\
&\to \text{admits a separately continuous weak selection}
\end{array}
\end{equation}

In this paper, every filter $p$ is assumed to be non-trivial and free, that is, $\emptyset \notin p$ and $\bigcap p=\emptyset$.
Following \cite{garcia-miyazaki-nogura:2013}, for a filter $p$ on an infinite set $X$,
the space $X_p=X\cup \{p\}$,
where $X$ is discrete and the neighborhoods of $p$ are of the form $P\cup\{p\}$ for $P\in p$, is called a \textit{filter space}. Every filter space has only one non-isolated point $p$, and every (Hausdorff) space with 
a single
non-isolated point can be described as a filter space.

The reversibility of implications in \eqref{four:properties} for filter spaces was studied in \cite{garcia-miyazaki-nogura-tomita:2013}, see Remark \ref{lemma105:product-selection}~(i).

Recently, a great deal of attention in the literature was paid to the problem of 
the existence of (separately) continuous weak selections on a product of two spaces, as well as, on the square of a space itself (\cite{garcia-miyazaki-nogura:2013}, \cite{gutev:2016}).
 Let us highlight some relevant results.

\begin{theorem} 
\label{corollary101:product-selection}
If a product $X\times Y$ of a space $X$ with a non-discrete space $Y$ admits a separately continuous weak selection, then $\psi (X)\leq a(Y)$.
\end{theorem}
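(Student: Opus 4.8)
The plan is to bound the pseudocharacter $\psi(x_0,X)$ at an arbitrary point $x_0\in X$ by exhibiting a family of at most $a(Y)$ open neighbourhoods of $x_0$ whose intersection is $\{x_0\}$; taking the supremum over $x_0$ then yields $\psi(X)\le a(Y)$. Let $\sigma$ be the given separately continuous weak selection on $X\times Y$ and let $\preceq\ =\ \preceq_\sigma$ be its selection relation, so that $\tau_\preceq\subseteq\tau_{X\times Y}$; equivalently, every half-interval $(\leftarrow,p)_\preceq$ and $(p,\rightarrow)_\preceq$ is open in $X\times Y$ for each $p\in X\times Y$. Since $Y$ is non-discrete, I fix a non-isolated point $q\in Y$ together with a family $\{V_\alpha:\alpha<a(Y)\}$ of open neighbourhoods of $q$ witnessing the value of $a(Y)$ (for instance with $\bigcap_{\alpha}V_\alpha=\{q\}$).

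First I would record two consequences of separate continuity that provide the raw material. (a) For each fixed $y\in Y$ the map $e_y\colon X\to X\times Y$, $e_y(x)=(x,y)$, is an embedding onto $X\times\{y\}$, and pulling $\preceq$ back along $e_y$ gives a total, antisymmetric relation $\preceq_y$ on $X$; since $(\leftarrow,x_0)_{\preceq_y}=e_y^{-1}[(\leftarrow,(x_0,y))_\preceq]$ and symmetrically for the right ray, each $\preceq_y$ is itself a separately continuous weak selection on $X$. In particular the fibre over $q$ produces $\preceq_q$. (b) Given any $x\ne x_0$, totality forces (say) $(x_0,q)\prec(x,q)$; then $(x_0,q)\in(\leftarrow,(x,q))_\preceq$, so there is a basic open box $C\times D\subseteq(\leftarrow,(x,q))_\preceq$ with $x_0\in C$ and $q\in D$, and because $(x,q)\notin(\leftarrow,(x,q))_\preceq$ while $q\in D$ we must have $x\notin C$. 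Thus every point $x\ne x_0$ can be separated from $x_0$ by an open set, and the neighbourhood $D$ of $q$ attached to this separation contains some $V_{\alpha(x)}$.

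The crux, and the step I expect to be the main obstacle, is to compress these per-point separations into a single family of size $a(Y)$ governed by the neighbourhood structure of $q$, and this is exactly where non-discreteness of $Y$ must be used. The idea is to index a candidate pseudobase by $\alpha<a(Y)$: for each $\alpha$ choose $y_\alpha\in V_\alpha$ with $y_\alpha\ne q$ (possible since $q$ is non-isolated) and consider the clopen-in-$(X\setminus\{x_0\})$ partition $X\setminus\{x_0\}=E_\alpha\sqcup E_\alpha'$ determined by the fibre selection $\preceq_{y_\alpha}$, where $E_\alpha=\{x:(x,y_\alpha)\prec(x_0,y_\alpha)\}$; these sets are open in $X$ by (a). One then wants to assemble from the data $\{\preceq_{y_\alpha}\}_{\alpha}$ and the fixed relation $\preceq_q$ a neighbourhood $W_\alpha$ of $x_0$ for each $\alpha$, and to prove $\bigcap_{\alpha}W_\alpha=\{x_0\}$. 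The mechanism I would try to exploit is that, as the $V_\alpha$ shrink to $q$, separate continuity forces the comparison of $(x,y)$ with $(x_0,q)$ to stabilise to its value at $q$ (which is fixed for a given $x$), so that for each $x\ne x_0$ the side of $x$ is eventually determined, while the side of $x_0$ is controlled by the fixed relation $\preceq_q$; a pigeonhole across $\alpha<a(Y)$ should then locate an $\alpha$ with $x\notin W_\alpha$.

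The delicate point — and the reason this is the hard step — is that the comparisons furnished by separate continuity mix the two levels $y_\alpha$ and $q$, whereas a clean pseudobase wants same-level information; reconciling the two, and verifying that exactly $a(Y)$ witnesses suffice rather than $|X|$-many, is where the argument must be done carefully. I would approach the verification by contradiction: assuming some $x^{*}\ne x_0$ lay in every $W_\alpha$, I would use the family $\{V_\alpha\}$ to build a net approaching $q$ along which the selection orders $(x^{*},\cdot)$ and $(x_0,\cdot)$ incompatibly with the openness of the half-intervals at $(x_0,q)$, contradicting $\tau_\preceq\subseteq\tau_{X\times Y}$. This contradiction would yield $\bigcap_\alpha W_\alpha=\{x_0\}$ and hence $\psi(x_0,X)\le a(Y)$.
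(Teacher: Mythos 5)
There is a genuine gap, and it occurs at the very first step: you treat $a(Y)$ as if it were a pseudo-character. The approaching number supplies a \emph{set} $A\subset Y\setminus\{q\}$ with $|A|\le a(Y)$ and $q\in\overline{A}$ for some non-isolated $q$; it does \emph{not} supply a family $\{V_\alpha:\alpha<a(Y)\}$ of open neighbourhoods of $q$ with $\bigcap_\alpha V_\alpha=\{q\}$. The latter would mean $\psi(q,Y)\le a(q,Y)$, which fails badly in general: Example \ref{example:a<psi<card} produces filter spaces with $a=\omega$ and $\psi$ an arbitrary regular cardinal. Everything downstream is built on this nonexistent object: the points $y_\alpha\in V_\alpha$, the ``shrinking to $q$'' mechanism, and the final ``net approaching $q$ along the $V_\alpha$'' all presuppose that the $V_\alpha$ form a (pseudo)base of size $a(Y)$ at $q$. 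Even if such a pseudobase existed, picking one point from each member would not give a set accumulating at $q$ (a pseudobase is not a base), so the stabilisation/pigeonhole step would still have nothing to act on. Finally, the step you yourself flag as ``the crux'' --- defining the $W_\alpha$ and proving $\bigcap_\alpha W_\alpha=\{x_0\}$ --- is only described as a hoped-for mechanism, not carried out, so the proof is incomplete even on its own terms. (For what it is worth, the paper does not prove this theorem either; it quotes it from \cite{garcia-miyazaki-nogura:2013}, \cite{gutev:2016} and \cite{motooka:2016}.)

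The witnessing data must be used the other way round. Fix a non-isolated $q\in Y$ and $A\subset Y\setminus\{q\}$ with $|A|\le a(Y)$ and $q\in\overline{A}$, fix $x_0\in X$, and split $A$ into $L=\{y\in A:(x_0,y)\prec_\sigma(x_0,q)\}$ and $R=A\setminus L$; keep the half, say $L$, with $q\in\overline{L}$. For each $y\in L$, separate continuity makes $((x_0,y),\rightarrow)_{\preceq_\sigma}$ an open neighbourhood of $(x_0,q)$ and $(\leftarrow,(x_0,q))_{\preceq_\sigma}$ an open neighbourhood of $(x_0,y)$; choose boxes $U_y\times W_y$ and $C_y\times D_y$ inside them respectively and put $G_y=U_y\cap C_y$, an open neighbourhood of $x_0$. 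If some $x\ne x_0$ belonged to every $G_y$, compare $(x,q)$ with $(x_0,q)$: in either case the relevant open half-interval contains a box around $(x_0,q)$ or around $(x,q)$ whose $Y$-factor is a neighbourhood of $q$ and hence meets $L$; the resulting $y\in L$ produces two opposite $\prec_\sigma$-comparisons (of $(x,q)$ with $(x_0,y)$ in one case, of $(x,y)$ with $(x_0,q)$ in the other), contradicting antisymmetry. Thus $\bigcap_{y\in L}G_y=\{x_0\}$ and $\psi(x_0,X)\le|L|\le a(Y)$. Note that the property actually needed --- every neighbourhood of $q$ meets $L$ --- is precisely the accumulation property your family $\{y_\alpha\}$ lacks.
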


We refer the reader to Section \ref{section:notation} for the definitions of cardinal functions $\psi (X)$ and $a(Y)$.
 
This theorem was proved first  in the case of continuous weak selections for products of filter spaces by Garc\'{i}a-Ferreira, Miyazaki and Nogura \cite[Theorem 2.2]{garcia-miyazaki-nogura:2013}. (It should be noted that the result for filter spaces 
implies the result for general spaces here.)
The generalization to the case of {\em separately\/} continuous weak selections can be derived from \cite[Proposition 3.4 and Remark 3.5]{gutev:2016}.
A slightly more general result can be found 
in \cite{motooka:2016}.

\begin{theorem}
\label{tot:disc}
If a product $X\times Y$ of a space $X$ with a non-discrete space $Y$ admits a separately continuous weak selection, then $X$ is totally disconnected.
\end{theorem}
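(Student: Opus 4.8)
The plan is to argue by contradiction. Suppose $X$ is not totally disconnected, so that some connected subset $C\subseteq X$ contains two distinct points $a,b$; let $\sigma$ be a separately continuous weak selection on $X\times Y$ with associated selection relation $\preceq\,=\,\preceq_\sigma$, and fix a non-isolated point $q\in Y$, which exists because $Y$ is non-discrete. Separate continuity says precisely that every half-interval $(\leftarrow,p)_{\preceq}$ and $(p,\rightarrow)_{\preceq}$ is open in $X\times Y$. The engine of the proof will be the following dichotomy, immediate from connectedness: for any connected $K\subseteq X\times Y$ and any $p\notin K$, the disjoint open sets $(\leftarrow,p)_{\preceq}$ and $(p,\rightarrow)_{\preceq}$ cover $K$, so $K$ lies entirely in one of them. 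I would apply this only to the connected ``slices'' $C\times\{y\}\cong C$.

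Next I would set up the perturbation. Renaming $a,b$ if necessary, assume $(a,q)\prec(b,q)$. Consider $V=\{y\in Y:(a,q)\prec(b,y)\}$ and $U=\{y\in Y:(a,y)\prec(b,q)\}$. Since the coordinate embeddings $y\mapsto(a,y)$ and $y\mapsto(b,y)$ are continuous and the relevant half-intervals are open, both $V$ and $U$ are open, and each contains $q$ because $(a,q)\prec(b,q)$. As $q$ is non-isolated, the neighborhood $V\cap U$ of $q$ contains some $y_1\neq q$. The point of choosing $y_1$ in \emph{both} sets is that the dichotomy then traps the whole slice $C\times\{y_1\}$ between $(a,q)$ and $(b,q)$: since $y_1\in V$ witnesses $(a,q)\prec(b,y_1)$, the dichotomy for $p=(a,q)$ (note $(a,q)\notin C\times\{y_1\}$ as $y_1\neq q$) forces $C\times\{y_1\}\subseteq((a,q),\rightarrow)_{\preceq}$; and since $y_1\in U$ witnesses $(a,y_1)\prec(b,q)$, the dichotomy for $p=(b,q)$ forces $C\times\{y_1\}\subseteq(\leftarrow,(b,q))_{\preceq}$.

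Finally I would derive the contradiction by reflecting back through the slice $C\times\{q\}$. Fix any $x\in C$. The first containment gives $(a,q)\prec(x,y_1)$, so $(a,q)\in(\leftarrow,(x,y_1))_{\preceq}$; applying the dichotomy now to the connected slice $C\times\{q\}$ and the point $(x,y_1)\notin C\times\{q\}$ shows that the \emph{entire} slice $C\times\{q\}$ lies in $(\leftarrow,(x,y_1))_{\preceq}$, whence $(b,q)\prec(x,y_1)$. But the second containment gives $(x,y_1)\prec(b,q)$, and $(b,q)\neq(x,y_1)$ because $y_1\neq q$; this violates antisymmetry of $\preceq$. I expect the main difficulty to be organizational rather than technical: the single genuine idea is the connectedness dichotomy, and the care lies in selecting one perturbed slice $C\times\{y_1\}$ through the common neighborhood $V\cap U$ so that it is simultaneously trapped above $(a,q)$ and below $(b,q)$, which is exactly what lets the final reflection through $(x,y_1)$ collide with antisymmetry.
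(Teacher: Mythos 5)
Your argument is correct and complete. The paper itself does not prove Theorem~\ref{tot:disc}: it quotes the result, attributing the continuous case to Garc\'{i}a-Ferreira, Miyazaki and Nogura and the separately continuous extension to \cite{motooka:2016}, so there is no in-paper proof to measure you against. On its own merits your proof holds up: separate continuity is exactly the statement that every half-interval $(\leftarrow,p)_{\preceq_\sigma}$ and $(p,\rightarrow)_{\preceq_\sigma}$ is open in $X\times Y$, and your dichotomy uses only totality and antisymmetry of $\preceq_\sigma$ (never transitivity, which a selection relation need not have), which is precisely the right level of generality here. The three applications of the dichotomy --- trapping $C\times\{y_1\}$ above $(a,q)$ via $y_1\in V$, trapping it below $(b,q)$ via $y_1\in U$, and then reflecting through $C\times\{q\}$ against the off-slice point $(x,y_1)$ --- are each legitimate because the excluded point really does lie off the slice ($y_1\neq q$ in all three cases), and the final clash $(b,q)\prec(x,y_1)\prec(b,q)$ with $(b,q)\neq(x,y_1)$ contradicts antisymmetry as claimed. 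This perturbation-of-a-connected-slice idea is essentially the mechanism behind the cited proofs as well, so you have reconstructed the standard argument rather than found a genuinely different one.
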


This theorem was proved in  the case of continuous weak selections by Garc\'{i}a-Ferreira, Miyazaki and Nogura \cite[Theorem 3.4]{garcia-miyazaki-nogura:2013}. It was extended to the case of separately continuous weak selections by the first author in \cite{motooka:2016}.

The following theorem was proved 
in \cite{motooka:2016}
as an extension of \cite[Theorem 2.2]{garcia-miyazaki-nogura:2013}.

\begin{theorem}
\label{theorem1:product-selection}
Let $X$ be a non-discrete space and $\kappa$ a regular cardinal such that $a(X)<\kappa$.
Then $X\times S$ does not admit a separately continuous weak selection for every stationary subset $S$ of $\kappa$.
\end{theorem}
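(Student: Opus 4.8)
The plan is to argue by contradiction and to replace the counting behind Theorem \ref{corollary101:product-selection} by a pressing-down argument, so that stationarity buys more than the bare inequality $\psi(X)\le a(Y)$ does. Suppose that $X\times S$ admits a separately continuous weak selection $\sigma$, and let $\preceq\,=\,\preceq_{\sigma}$ be the induced selection relation; by separate continuity $\tau_{\preceq}\subseteq\tau$, so every half-interval $(\leftarrow,p)_{\preceq}$ and $(p,\rightarrow)_{\preceq}$ is open in the product $X\times S$. Since $a(X)<\kappa$, I first fix a non-isolated point $x_{0}\in X$ whose local structure is witnessed by a family of fewer than $\kappa$ open sets, together with a second point $z\in X\setminus\{x_{0}\}$. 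The inequality $a(X)<\kappa$ will be spent precisely to keep the number of possible $X$-neighborhoods of $x_{0}$ that can occur below $\kappa$.

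Next I would pass from $S$ to a more tractable stationary set and then build a regressive map. The points of $S$ that are isolated from the left form a non-stationary set: such an $\alpha$ comes with some $h(\alpha)<\alpha$ for which $(h(\alpha),\alpha]\cap S=\{\alpha\}$, so if these points were stationary, Fodor's lemma would make $h$ constant on a stationary set, forcing a strictly smaller element of that set into an interval $(h(\alpha),\alpha]$ that was supposed to meet $S$ only in $\alpha$ — impossible. Hence the set $L$ of points of $S$ that are left limits of $S$ is stationary, and each $\alpha\in L$ has elements of $S$ arbitrarily close below it. For $\alpha\in L$, separate continuity shows that the $\preceq$-position of the column point $(x_{0},\alpha)$ relative to $(z,\alpha)$ is decided on a whole box: there are an open $W_{\alpha}\ni x_{0}$ in $X$ and an ordinal $\gamma_{\alpha}<\alpha$ such that the order between $(x,\beta)$ and $(z,\alpha)$ is the same for all $x\in W_{\alpha}$ and all $\beta\in(\gamma_{\alpha},\alpha]\cap S$. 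Colouring $L$ by which of the two orderings occurs and keeping a stationary monochromatic piece, and then recording $\gamma_{\alpha}$ together with the index of a fixed witness for $W_{\alpha}$ drawn from the family of size $a(X)<\kappa$, defines a regressive map on this piece.

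Now I would press down. By Fodor's lemma the assignment $\alpha\mapsto\gamma_{\alpha}$ is constant, equal to some $\gamma^{*}$, on a stationary set $T$; since fewer than $\kappa=|T|$ witnesses for the $X$-neighborhood are available and $\kappa$ is regular, I may shrink $T$ to a stationary set on which every $W_{\alpha}$ contains one fixed neighborhood $U^{*}\ni x_{0}$. This produces a single uniform region: for all $\alpha\in T$, all $x\in U^{*}$, and all $\beta\in(\gamma^{*},\alpha]\cap S$, the pair $\{(x,\beta),(z,\alpha)\}$ is ordered in one fixed direction. The final step is to collide this uniform horizontal information with the behaviour of $\sigma$ along the vertical slices $\{x_{0}\}\times S$ and $\{z\}\times S$ — each itself a separately continuous weak selection on a stationary set of ordinals, hence, after one more pressing-down, agreeing with the usual order of $\kappa$ on sufficiently close pairs over a stationary subset of $T$ — and with the non-isolation of $x_{0}$ inside $U^{*}$, so as to exhibit finitely many points of $U^{*}\times T$ whose $\preceq$-relations violate either antisymmetry or totality of the selection relation. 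I expect this last collision to be the main obstacle: the bookkeeping must be arranged so that the two elements of $T$ fed back into the uniform region, the fixed direction of the colour, and the slice orders are genuinely incompatible rather than merely consistent, and this is exactly the point at which the regularity of $\kappa$ and the non-isolation of $x_{0}$ must be used together. Deleting the pressing-down steps, the same scheme recovers Theorem \ref{corollary101:product-selection}, confirming that the result is an extension of it.
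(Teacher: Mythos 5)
First, for calibration: the paper does not actually prove this theorem — it is quoted from \cite{motooka:2016} — so there is no internal proof to compare against, and your attempt has to stand on its own. It does not yet do so, for two concrete reasons. The first is a misuse of the hypothesis $a(X)<\kappa$. The approaching number gives you a \emph{set} $A\subset X\setminus\{x_0\}$ with $|A|<\kappa$ and $x_0\in\overline{A}$; it does not bound the number of open neighborhoods of $x_0$ (that would be the character, which is not assumed small — indeed $\psi(x_0,X)$ may well exceed $\kappa$). Hence the step ``since fewer than $\kappa$ witnesses for the $X$-neighborhood are available \dots\ I may shrink $T$ to a stationary set on which every $W_\alpha$ contains one fixed neighborhood $U^*\ni x_0$'' is unjustified: the $W_\alpha$ produced by separate continuity can be $\kappa$ pairwise distinct neighborhoods of $x_0$ with no common open core. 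What $a(X)<\kappa$ really buys is that each $W_\alpha$ meets the fixed set $A$, so by regularity of $\kappa$ you may stabilize a single \emph{point} $a^*\in A$ lying in every $W_\alpha$ for $\alpha$ in a stationary set — a much weaker conclusion, which in particular removes the ``non-isolation of $x_0$ inside $U^*$'' that your endgame relies on.

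The second and more serious gap is that the contradiction is never derived, as you yourself acknowledge. The intermediate claim that $\preceq_\sigma$ restricted to the slices $\{x_0\}\times S$ and $\{z\}\times S$ must ``agree with the usual order of $\kappa$ on sufficiently close pairs over a stationary set'' is unproved and not obviously true: each slice is a copy of the suborderable space $S$, which carries many continuous weak selections (the reversed order, for one). Worse, the uniform configuration you do establish — all of $W\times((\gamma^*,\alpha]\cap S)$ lying on one fixed side of the single column point $(z,\alpha)$ for all $\alpha$ in a stationary $T$ — is perfectly consistent: a selection relation that places the entire slab $U\times S$ on one side of $\{z\}\times S$ realizes it, so no amount of bookkeeping with one comparison point $z$ can produce a clash. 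A genuine contradiction must exhibit one pair forced into both orders by two different open boxes (compare the proof of Example \ref{exotic}, where $(x_0,y_0)\prec_\sigma(x_1,y_1)$ and $(x_1,y_1)\prec_\sigma(x_0,y_0)$ are both extracted after a double $\Delta$-system stabilization); here that requires comparing $(x_0,\alpha)$ against the whole approaching set $A\times\{\alpha\}$, or against vertical predecessors $(x_0,\beta)$ with $\beta\in S\cap\alpha$, followed by a second, interleaved pressing-down. Your preliminary steps (the set of left-limit points of $S$ is stationary, boxes from $\tau_{\preceq_\sigma}\subset\tau$, Fodor on $\alpha\mapsto\gamma_\alpha$) are correct and are indeed the standard ingredients, but the argument stops exactly where the content of the theorem begins.
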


The following theorem was established recently by Gutev \cite[Corollary 5.3]{gutev:2016}.
\begin{theorem}
\label{thm:Gutev}
The square $X\times X$ of a regular countably compact space $X$ has a continuous weak selection if and only if $X$ is zero-dimensional and metrizable. 
\end{theorem}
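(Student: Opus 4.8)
The statement is a biconditional, so the plan is to prove the two implications separately, the reverse one being essentially routine and the forward one carrying the real content.

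\emph{Sufficiency.} Suppose $X$ is zero-dimensional and metrizable. Then $X$ is strongly zero-dimensional and hence carries a compatible non-Archimedean (ultra-)metric $d$; consequently $X\times X$ carries the ultrametric $\rho((x_1,x_2),(y_1,y_2))=\max\{d(x_1,y_1),d(x_2,y_2)\}$, so it suffices to show that every non-Archimedean metric space $(Z,\rho)$ admits a continuous weak selection. For this one uses that the $\rho$-balls of each fixed radius partition $Z$ into clopen pieces, nested across radii, forming a tree of clopen sets; fixing a linear order on the children of every node and declaring $\sigma(\{x,y\})$ to be the point lying in the earlier of the two pieces that first separate $x$ and $y$ yields a weak selection whose continuity is immediate from the ultrametric inequality. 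Applying this to $Z=X\times X$ gives the required selection; note that no orderability of the square is needed here.

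\emph{Necessity.} Let $X$ be regular and countably compact and let $\sigma$ be a continuous weak selection on $X\times X$. If $X$ is finite it is trivially zero-dimensional and metrizable, so assume $X$ is infinite; an infinite countably compact space is non-discrete. Putting $Y=X$ in Theorem \ref{tot:disc} shows that $X$ is totally disconnected, and putting $Y=X$ in Theorem \ref{corollary101:product-selection} gives $\psi(X)\le a(X)$. I would then read off from the definition of $a$ (Section \ref{section:notation}) that $a(X)=\omega$ for a countably compact $X$, so that $\psi(X)=\omega$; since a point that is a $G_\delta$ in a regular countably compact space is automatically a point of first countability — a decreasing sequence of open sets $U_n$ chosen by regularity with $\overline{U_{n+1}}\subset U_n$ and $\bigcap_n\overline{U_n}=\{x\}$ is a local base, because a point chosen outside a fixed neighborhood would, by countable compactness, cluster at a point of $\bigcap_n\overline{U_n}=\{x\}$, a contradiction — the space $X$ is first countable.

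The passage from here to metrizability is the heart of the argument and the step I expect to be the main obstacle, because first countability, total disconnectedness and countable compactness are far from sufficient: the ordinal space $[0,\omega_1)$ enjoys all three yet is not metrizable. The decisive extra input is that the selection lives on the \emph{square}. Indeed, for every closed $C\subset X$ the restriction of $\sigma$ to $X\times C\subset X\times X$ is again a continuous, hence separately continuous, weak selection, so if $X$ contained a closed copy of a stationary subset $S$ of a regular uncountable cardinal $\kappa$ we would obtain a separately continuous weak selection on $X\times S$, contradicting Theorem \ref{theorem1:product-selection} (here $a(X)=\omega<\kappa$). Thus $X$ contains no closed copy of any such $S$; in particular none of $[0,\omega_1)$. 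The plan is then to upgrade ``first countable, regular, countably compact, with no stationary copies'' to genuine \emph{compactness}, and to exploit that $X\times X$ is then compact: a continuous weak selection on a compact space forces orderability by the van Mill--Wattel theorem \cite{mill:1981}, so $X\times X$ becomes a compact orderable space. The remaining task is the metrization lemma that a totally disconnected compact space whose \emph{square} is orderable must be metrizable — here the square is used decisively, since orderability of $X\times X$ is rigid enough to exclude non-metrizable totally disconnected compacta such as the two-arrows space. Once $X$ is compact and metrizable, total disconnectedness yields zero-dimensionality and the proof is complete. The two places where real work is required are thus the compactness step and the square-orderability metrization lemma; the remaining implications assemble routinely from Theorems \ref{tot:disc}, \ref{corollary101:product-selection} and \ref{theorem1:product-selection}.
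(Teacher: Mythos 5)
First, a point of comparison: the paper does not prove Theorem \ref{thm:Gutev} at all --- it is quoted from Gutev \cite{gutev:2016} (his Corollary 5.3) --- so there is no internal proof to measure yours against; I can only assess your argument on its own terms. Your sufficiency direction is essentially correct (and can be shortened: a zero-dimensional countably compact metrizable space is compact metrizable, hence it and its square embed into the Cantor set, and suborderable spaces admit continuous weak selections). Your opening moves in the necessity direction are also sound: $a(X)=\omega$ for an infinite countably compact space, so Theorems \ref{tot:disc} and \ref{corollary101:product-selection} give total disconnectedness and $\psi(X)=\omega$, first countability follows by the standard regularity argument, and Theorem \ref{theorem1:product-selection} correctly excludes copies of stationary sets.

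The genuine gap is the compactness step, and the route you propose for it cannot work as stated. You intend to deduce compactness of $X$ from the conjunction ``first countable, regular, countably compact, contains no copy of a stationary subset of an uncountable regular cardinal,'' but this conjunction does not imply compactness: under Jensen's axiom $\diamondsuit$ the Ostaszewski space is first countable, locally compact, zero-dimensional, countably compact and non-compact, and being hereditarily separable it contains no copy of a stationary set (every countable subset of a stationary set has closure inside a bounded initial segment, so stationary sets are never separable). The exclusion of stationary copies yields paracompactness only in the presence of orderability-type structure (Engelking--Lutzer, Bula, Balogh--Rudin), so you must first establish that $X$ itself is suborderable. That is the missing idea: $X$ inherits a continuous weak selection from $X\times X$, and a countably compact (indeed pseudocompact) space with a continuous weak selection is suborderable \cite{artico-marconi-pelant-rotter-tkachenko:2002}; then Theorem \ref{theorem0:product-selection} together with your exclusion of stationary copies gives paracompactness, and paracompact plus countably compact gives compactness. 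Your second acknowledged gap --- that an infinite compact space with an orderable square is metrizable --- is a real theorem (it is Treybig's theorem on products that are continuous images of compact ordered spaces, applied after van Mill--Wattel \cite{mill:1981}), but you neither identify nor prove it. As written, then, the proposal is a plan with two load-bearing steps missing, one of them aimed in a direction that provably cannot succeed from the hypotheses you assemble.
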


Countable compactness of $X$ cannot be dropped in this theorem; see Remark \ref{remark:c-cpt-in-Gutev's-thm}.

The purpose of this paper is to study 
the existence of continuous weak selections and related properties in \eqref{four:properties} for filter spaces and 
product spaces.

In Sections \ref{section:Weak orderablity and suborderability of filter spaces} and \ref{uniform:filter}, we discuss properties in \eqref{four:properties} for filter spaces.
Some counterexamples of filter spaces will be given in Section \ref{section:examples}.
In Section \ref{section:produts-of-filter-spaces}, we give sufficient conditions for the weak orderability of products of filter spaces.
In Section \ref{section:necessary-conditions}, we prove a necessary condition for the existence of separately continuous weak selection on product spaces
which generalizes \cite[Theorem 3.1]{garcia-miyazaki-nogura:2013}; see Theorem \ref{theorem:necessary-cond-hered-para}.
Some open questions are listed in Section \ref{section:questions}.

\section{Notations}
\label{section:notation}

A subset $C$ of a linearly ordered set $(X,\le)$ is said to be {\em cofinal\/} in $(X,\le)$ provided that for every $x\in X$, there exists $c\in C$ such that $x\le c$. 

Every ordinal $\alpha$ is considered as a linearly ordered topological space consisting of ordinals less than $\alpha$. 
A cardinal is
an ordinal that cannot be mapped onto a smaller 
ordinal
bijectively.
For a cardinal $\kappa$, the least cardinal bigger than $\kappa$ is denoted by $\kappa^+$.
The first infinite ordinal (cardinal) and the first uncountable one are 
denoted by $\omega$ and $\omega_{1}$, respectively. 
Recall that the \textit{cofinality} $\cf(\kappa)$ of a cardinal $\kappa$ is the smallest cardinal $\sigma$ such that there exists a map
$f:\sigma\to\kappa$ such that the set $\{f(\alpha):\alpha<\sigma\}$ is cofinal in $\kappa$.
A cardinal $\kappa$ is {\em regular\/} if $\cf(\kappa)=\kappa$.
The cardinality of a set $A$ is denoted by $|A|$.

For a subset $A$ of a space $X$, we use $\overline{A}$ to denote the closure of $X$ in $X$.
For a space $X$ and a non-isolated point $p \in X$, the cardinal 
\begin{align*}
	a(p,X)=\min\{\kappa : p\in \overline{A}\text{ for some }A\subset X\setminus \{p\}\text{ with }|A|\le\kappa\}
\end{align*} 
is called the \textit{approaching number} of $p$ in $X$ \cite{garcia-miyazaki-nogura:2013}. It was introduced in \cite{garcia-gutev-nogura-sanchis-tomita:2002} under the name of selection approaching number.

For a space $X$ and $p \in X$, let $t(p,X)$ denote the {\em tightness of $p$ in $X$}, that is, the smallest infinite cardinal number $\kappa$ with the property that if $p \in \overline{A}$ where $A \subset X$, then there exists $B\subset A$ such that $p \in \overline{B}$ and $|B|\leq \kappa$.
Clearly, 
\begin{equation}
\label{a:and:t}
a(p,X)\le t(p,X)
\text{ for every non-isolated point }p\in X
\end{equation}
  but the equality does not hold in general; see \cite[Section 4]{garcia-gutev-nogura-sanchis-tomita:2002}.

It is convenient to introduce the cardinal 
\begin{align*}
a(X) =\min\{a(p,X) : p\text{ is a non-isolated point of }X\},
\end{align*}
which we call the {\em approaching number of $X$\/}.

It easily follows from \eqref{a:and:t} that
\begin{equation}
\label{a:t}
a(X)\le t(X),
\end{equation}
  where 
$t(X)=\sup\{t(p,X): p\in X\}$ is the {\em tightness\/} of $X$.

A point $p$ in a space $X$ is said to be a \textit{$G_{\kappa}$-point} if it is the intersection of $\kappa$-many open sets in $X$, and the cardinal 
\begin{align*}
	\psi(p,X)&=\min\{\kappa : p\text{ is a }G_{\kappa}\text{-point in }X\}
\end{align*}
is called the \textit{pseudo-character} of $p$ in $X$.
The \textit{pseudo-character} $\psi (X)$ of $X$ 
is defined by
\begin{align*}
\psi(X) &=\sup\{\psi(p,X) : p\in X\}. 
\end{align*}

By $\mathbb{Z}$ we denote the set of integers with the usual order.

For undefined notations and terminology, we refer the reader to \cite{engelking:1989} or \cite{kunen:1983}.

\section{Various forms of orderability for filter spaces}
\label{section:Weak orderablity and suborderability of filter spaces}

In this section, we consider properties in \eqref{four:properties} for filter spaces. We start with 
two comments
on the reversibility of some implications in \eqref{four:properties}.

\begin{remark}
	\label{lemma105:product-selection}
\begin{enumerate}[{\rm (i)}]
\item 
It was proved in \cite[Theorem 3.7]{garcia-miyazaki-nogura-tomita:2013} that {\em every suborderable filter space is orderable\/}.

\item
Applying \cite[Theorem 2.6 and Proposition 4.1]{gutev-nogura:2010} and the fact that every point in $X_p \setminus \{p\}$ is isolated, we  see that {\em every 
separately continuous weak selection on a filter space $X_p$ is continuous\/}.
\end{enumerate}
\end{remark}

In view of the above remark, for filter spaces, the implications in \eqref{four:properties} are simplified as follows:
\begin{equation}
\label{four:properties:for:filter:spaces}
\text{orderable}\to\text{weakly orderable}
\to\text{admits a continuous weak selection}.
\end{equation}

We also consider the following notions.
\begin{definition}[\cite{garcia-gutev-nogura-sanchis-tomita:2002}]
For a space $X$ and $p \in X$, $X$ is  said to be \emph{(weakly) $p$-orderable}
if it is (weakly) orderable 
by some linear order $\preceq$ such that $p$ is $\preceq$-maximal, or equivalently, $\preceq$-minimal.
\end{definition}

A family $\mathcal{P}$ of subsets of $X$ is said to be \emph{nested} if  $P \subset Q$ or $Q \subset P$ for every $P, Q \in \mathcal{P}$.

\begin{theorem}[{\cite[Corollary 5.5]{garcia-gutev-nogura-sanchis-tomita:2002}}]
\label{lemma:filter-sp-suff-cond-wo}
A filter space $X_p$ is weakly $p$-orderable  if and only if 
there exists a nested subfamily  $\mathcal{P}$ of $p$ satisfying $\bigcap\mathcal{P} = \emptyset$.
\end{theorem}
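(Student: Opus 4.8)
The plan is to reduce the statement about weak $p$-orderability to a purely combinatorial condition on a linear order of the set $X$, and then verify the two implications against that condition. First I would observe that in a filter space $X_p$ every point of $X$ is isolated, so for any selection relation $\preceq$ the inclusion $\tau_\preceq \subseteq \tau$ imposes no constraint away from $p$: every subset of $X$ is already open. Normalizing so that $p$ is $\preceq$-maximal (permissible by the ``equivalently $\preceq$-minimal'' clause of the definition, via order reversal), the only sets in the generating subbase of $\tau_\preceq$ that contain $p$ are the final segments $(y,\rightarrow)_\preceq=\{z\in X:y\prec z\}\cup\{p\}$ with $y\in X$, since $(\leftarrow,x)_\preceq$ can never contain the maximum $p$. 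Writing $U_y=\{z\in X:y\prec z\}$ for the tail of $y$ inside $X$, the condition $\tau_\preceq\subseteq\tau$ is therefore equivalent to requiring $U_y\in p$ for every $y\in X$. Hence $X_p$ is weakly $p$-orderable if and only if the set $X$ carries a linear order all of whose proper final segments belong to $p$, and I would make this equivalence the pivot of the argument.

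For the direction from weak $p$-orderability to the nested family, I would take a linear order on $X$ with every tail $U_y$ in $p$ and simply set $\mathcal P=\{U_y:y\in X\}$. Each $U_y$ lies in $p$ by assumption; the family is nested because the tails of a linear order form a chain under reverse inclusion (if $y\preceq y'$ then $U_{y'}\subseteq U_y$); and $\bigcap\mathcal P=\emptyset$ because no point lies in its own tail, as $z\notin U_z$. This direction is essentially immediate.

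The substantive direction is the converse. Given a nested $\mathcal P\subseteq p$ with $\bigcap\mathcal P=\emptyset$, I would manufacture the order from the ``depth'' of points in the chain $\mathcal P$. For $x\in X$ put $S_x=\{P\in\mathcal P:x\in P\}$ and declare $x\preceq y$ iff $S_x\subseteq S_y$. Nestedness of $\mathcal P$ makes $\preceq$ a \emph{total} preorder: if neither $S_x\subseteq S_y$ nor $S_y\subseteq S_x$, then witnesses $P$ with $x\in P,\ y\notin P$ and $Q$ with $y\in Q,\ x\notin Q$ would violate the comparability of $P$ and $Q$. I would then refine $\preceq$ to a genuine linear order by breaking ties inside each class on which $S_x$ is constant with an arbitrary linear order, and placing $p$ on top so that it is $\preceq$-maximal.

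The key step, and the part I expect to be the main obstacle, is verifying that each tail $U_y$ still belongs to $p$ after this construction. Here I would use $\bigcap\mathcal P=\emptyset$ to pick $P\in\mathcal P$ with $y\notin P$; nestedness then forces $P\subsetneq Q$ for every $Q\in\mathcal P$ with $y\in Q$, whence every $z\in P$ satisfies $S_y\subsetneq S_z$ and therefore $y\prec z$, strictly and independently of the tie-breaking, precisely because the inclusion $S_y\subsetneq S_z$ is strict. Thus $P\subseteq U_y$, and upward closure of the filter yields $U_y\in p$. Combined with the opening reduction, this produces the desired weak $p$-ordering. The only delicate points are the totality of the preorder and the observation that strict inclusion of the $S$-sets makes the tie-breaking irrelevant; everything else is bookkeeping.
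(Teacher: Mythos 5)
Your proof is correct, but the substantive (``if'') direction takes a genuinely different route from the paper's own argument. The paper first extracts from $\mathcal{P}$ a well-ordered, strictly decreasing subchain $\{P_\alpha:\alpha<\gamma\}$ with empty intersection (Fact~\ref{fact:partition-wo}), partitions $X$ into the discrete pieces $W_\alpha=(\bigcap_{\beta<\alpha}P_\beta)\setminus P_\alpha$, and then invokes the gluing Lemma~\ref{lemma103:product-selection} to concatenate arbitrary orders on the pieces with $p$ placed on top. You instead work with the whole nested family at once: the trace $S_x=\{P\in\mathcal{P}:x\in P\}$ gives a total preorder (your totality argument via two incomparable witnesses is exactly the right use of nestedness), you refine it arbitrarily on its classes, and the key verification succeeds because for $P\in\mathcal{P}$ with $y\notin P$ one gets $P\subsetneq Q$ for every $Q\in\mathcal{P}$ containing $y$, hence $S_y\subsetneq S_z$ for all $z\in P$, so $P\subseteq U_y$ regardless of tie-breaking and $U_y\in p$ by upward closure. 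Your opening reduction is also sound: with $p$ made $\preceq$-maximal, the only subbasic $\tau_\preceq$-sets containing $p$ are the tails $(y,\rightarrow)_\preceq$ with $y\in X$, so $\tau_\preceq\subseteq\tau$ is equivalent to $U_y\in p$ for all $y\in X$; and your ``only if'' direction, taking $\mathcal{P}=\{U_y:y\in X\}$, coincides with the paper's. What your route buys is a shorter, self-contained construction that avoids transfinite recursion and the selection-gluing machinery; what the paper's route buys is reuse of Lemma~\ref{lemma103:product-selection} and Fact~\ref{fact:partition-wo}, which it needs anyway for the product results of Section~\ref{section:produts-of-filter-spaces}.
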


This theorem was obtained as a corollary of {\cite[Theorem 5.1]{garcia-gutev-nogura-sanchis-tomita:2002}}; see Remark \ref{remark:filter-sp-ord}  below. 
We will give another proof of Theorem \ref{lemma:filter-sp-suff-cond-wo}
in the end of Section \ref{section:produts-of-filter-spaces}.

\begin{lemma}
\label{element:in:the:intersection}
Let $\mathcal{P}$ be a nested family of subsets of a set $X$ such that $\bigcap\mathcal{P} =\emptyset$.
If $\mathcal{P}'\subset \mathcal{P}$ and 
$\bigcap \mathcal{P}'\neq\emptyset$, then 
$P \subsetneq \bigcap\mathcal{P}'$
for some $P\in\mathcal{P}$.
\end{lemma}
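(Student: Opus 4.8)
The plan is to exhibit the desired $P$ as a single, carefully chosen member of $\mathcal{P}$ that already sits strictly below the whole intersection $\bigcap\mathcal{P}'$. Write $I=\bigcap\mathcal{P}'$ and recall that by hypothesis $I\neq\emptyset$, so I may fix a point $x\in I$. The crucial observation is that, since $\bigcap\mathcal{P}=\emptyset$, this point $x$ cannot belong to every member of $\mathcal{P}$; hence there exists $P\in\mathcal{P}$ with $x\notin P$. I would take this $P$ as the candidate and then verify that $P\subsetneq I$.

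Next I would use the nestedness of $\mathcal{P}$ to pin down the position of $P$ relative to $\mathcal{P}'$. Fix an arbitrary $Q\in\mathcal{P}'$. Since $\mathcal{P}'\subset\mathcal{P}$, both $P$ and $Q$ lie in $\mathcal{P}$, so they are comparable: either $P\subset Q$ or $Q\subset P$. The second alternative is impossible, because it would yield $x\in I\subset Q\subset P$, contradicting the choice $x\notin P$. Therefore $P\subset Q$. As $Q\in\mathcal{P}'$ was arbitrary, this gives $P\subset\bigcap\mathcal{P}'=I$, and finally $x\in I\setminus P$ witnesses that the inclusion is proper, i.e.\ $P\subsetneq I$, which is exactly the conclusion.

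The only delicate point — and the step I would guard most carefully — is the direction of the comparison in the previous paragraph: the whole argument hinges on excluding the alternative $Q\subset P$, and this is precisely where the chosen point $x$ does its work, forcing $P\subset Q$ for every $Q\in\mathcal{P}'$ and thereby pushing $P$ below the intersection. Everything else is routine bookkeeping of inclusions, so I expect no further machinery to be required.
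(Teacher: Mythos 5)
Your proposal is correct and follows essentially the same argument as the paper: fix $x\in\bigcap\mathcal{P}'$, use $\bigcap\mathcal{P}=\emptyset$ to find $P\in\mathcal{P}$ with $x\notin P$, and then use nestedness to rule out $Q\subset P$ for each $Q\in\mathcal{P}'$, so that $P\subset\bigcap\mathcal{P}'$ with $x$ witnessing strictness. No gaps.
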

\begin{proof}
Let $\mathcal{P}'\subset \mathcal{P}$ with  
$\bigcap \mathcal{P}'\neq\emptyset$ and fix 
$x \in \bigcap \mathcal{P}'$.
Since $\bigcap\mathcal{P} =\emptyset$, we have $x \not\in  P$ for some $P\in\mathcal{P}$. 
If $P'\in\mathcal{P}'$, then $x\in P'\setminus P$, so $P'\not\subset P$. Since $P,P'\in\mathcal{P}$ and $\mathcal{P}$ is nested, $P\subset P'$. Since this holds for every $P'\in\mathcal{P}'$, we get $P\subset \bigcap\mathcal{P}'$.
Finally, $P\neq \bigcap\mathcal{P}'$, as 
$x\in (\bigcap\mathcal{P}')\setminus P$.
\end{proof}

\begin{proposition}
\label{proposition:p-ord->psi<a}
Every weakly $p$-orderable filter space $X_p$
satisfies
$\psi(X_p) \leq \kappa \le a(X_p)$,
where $$\kappa =\min\{|\mathcal{P}| : \mathcal{P} \text{ is a nested subfamily of } p \text{ satisfying } \bigcap\mathcal{P} =\emptyset \}.$$
(The cardinal $\kappa$ is well-defined by Theorem \ref{lemma:filter-sp-suff-cond-wo}.)
\end{proposition}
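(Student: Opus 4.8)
My plan is to evaluate both $\psi(X_p)$ and $a(X_p)$ at the unique non-isolated point $p$ and to translate the two cardinals into combinatorics of the filter $p$. Since every point of $X=X_p\setminus\{p\}$ is isolated, we have $a(X_p)=a(p,X_p)$ (the minimum is over the single non-isolated point) and $\psi(X_p)=\psi(p,X_p)$ (the isolated points contribute $1$, while $\psi(p,X_p)$ is infinite because finite intersections of members of the filter $p$ are non-empty). As the basic open neighbourhoods of $p$ are precisely the sets $P\cup\{p\}$ with $P\in p$, these reduce to
\begin{align*}
\psi(p,X_p)&=\min\{|\mathcal{Q}|:\mathcal{Q}\subseteq p,\ \textstyle\bigcap\mathcal{Q}=\emptyset\},\\
a(p,X_p)&=\min\{|A|:A\subseteq X,\ A\cap P\neq\emptyset\text{ for all }P\in p\},
\end{align*}
where the second line uses that $p\in\overline{A}$ if and only if $A$ meets every member of $p$. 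With this dictionary the inequality $\psi(X_p)\le\kappa$ is immediate: a witnessing nested family $\mathcal{P}\subseteq p$ with $|\mathcal{P}|=\kappa$ and $\bigcap\mathcal{P}=\emptyset$ is in particular a subfamily of $p$ with empty intersection, so $\{P\cup\{p\}:P\in\mathcal{P}\}$ is a family of $\kappa$ open neighbourhoods of $p$ whose intersection is $\{p\}$, exhibiting $p$ as a $G_\kappa$-point.

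The substantive inequality is $\kappa\le a(X_p)$. Here I would invoke Theorem \ref{lemma:filter-sp-suff-cond-wo} to fix a nested subfamily $\mathcal{P}_0\subseteq p$ with $\bigcap\mathcal{P}_0=\emptyset$, and I would fix a set $A\subseteq X$ realising $a(X_p)$, that is, with $|A|=a(X_p)$ and $A\cap P\neq\emptyset$ for every $P\in p$. For each $a\in A$, since $\bigcap\mathcal{P}_0=\emptyset$, I can choose $P_a\in\mathcal{P}_0$ with $a\notin P_a$. The family $\mathcal{P}=\{P_a:a\in A\}$ is then a nested subfamily of $p$ with $|\mathcal{P}|\le|A|=a(X_p)$, so it remains only to verify $\bigcap\mathcal{P}=\emptyset$; once this is done, the definition of $\kappa$ gives $\kappa\le|\mathcal{P}|\le a(X_p)$.

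The heart of the argument, and the step I expect to be the main obstacle, is proving $\bigcap\mathcal{P}=\emptyset$, since the passage from $\mathcal{P}_0$ to the possibly far smaller subfamily indexed by $A$ must be controlled. I would argue by contradiction: suppose $z\in\bigcap_{a\in A}P_a$. Using $\bigcap\mathcal{P}_0=\emptyset$ again, pick $P_z\in\mathcal{P}_0$ with $z\notin P_z$; since $P_z\in p$ and $A$ meets every member of $p$, there is $a\in A\cap P_z$. Now compare $P_a$ and $P_z$ using that $\mathcal{P}_0$ is nested: if $P_z\subseteq P_a$, then $a\in P_z\subseteq P_a$ contradicts $a\notin P_a$, while if $P_a\subseteq P_z$, then $z\in P_a\subseteq P_z$ contradicts $z\notin P_z$. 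Either way we reach a contradiction, so $\bigcap\mathcal{P}=\emptyset$, completing the proof. (Alternatively, one can produce $\mathcal{P}$ directly from a weak $p$-ordering $\preceq$ by setting $P_x=(x,\rightarrow)_{\preceq}\cap X$ for $x\in A$ and using $\preceq$-maximality of $p$ to see that each $P_x\in p$ and $\bigcap_{x\in A}P_x=\emptyset$; but the filter-combinatorial version above is shorter and relies only on Theorem \ref{lemma:filter-sp-suff-cond-wo}.)
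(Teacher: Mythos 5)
Your proof is correct, and for the substantive inequality $\kappa\le a(X_p)$ it takes a route that is recognizably dual to, but genuinely distinct from, the paper's. The paper argues contrapositively: it fixes an arbitrary $A\subset X$ with $|A|<\kappa$, chooses $P_a\in\mathcal{P}$ with $a\notin P_a$ for each $a\in A$, uses the \emph{minimality} of $\kappa$ to conclude that the nested family $\mathcal{P}'=\{P_a:a\in A\}$ has non-empty intersection, and then invokes Lemma \ref{element:in:the:intersection} to extract a single $P\in\mathcal{P}$ with $P\subset\bigcap\mathcal{P}'$; since $A\cap\bigcap\mathcal{P}'=\emptyset$ and $P\in p$, this shows $p\notin\overline{A}$. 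You instead fix a minimal set $A$ witnessing $a(X_p)$ (so $A$ meets every member of $p$), build the same family $\{P_a:a\in A\}$, and prove \emph{directly} that its intersection is empty via the two-case nestedness comparison of $P_a$ and $P_z$; the definition of $\kappa$ as a minimum then gives $\kappa\le|A|=a(X_p)$. Your version dispenses with both Lemma \ref{element:in:the:intersection} and the appeal to minimality of $\kappa$ inside the argument, replacing them with the single fact that a set realizing $a(X_p)$ must meet every filter set; the paper's version buys a reusable auxiliary lemma (which it also needs for Fact \ref{fact:partition-wo}). The treatment of $\psi(X_p)\le\kappa$ coincides with the paper's, which simply calls it immediate; your explicit reduction of $\psi(p,X_p)$ and $a(p,X_p)$ to filter combinatorics is a harmless elaboration.
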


\begin{proof}
The inequality $\psi(X_p)\leq \kappa$ is immediate from the definitions of  $X_p$, $\psi(X_p)$ and $\kappa$.

To prove the inequality $a(X_p)\geq \kappa$, it suffices to fix  $A\subset X$ with $|A|<\kappa$ and show that $p\not\in\overline{A}$.
Since $\bigcap\mathcal{P} = \emptyset$,
for
each $a \in A$, there exists $P_a \in \mathcal{P}$ such that 
$a \notin P_a$.
Note that the family $\mathcal{P}'=\{P_a:a\in A\}\subset\mathcal{P}$ is nested, as a subfamily of the nested family $\mathcal{P}$. Since $|\mathcal{P}'|\le |A|<\kappa$, 
from the minimality of $\kappa$ we conclude that 
$\bigcap \mathcal{P}' \ne \emptyset$.
Applying Lemma \ref{element:in:the:intersection}, we can find 
$P\in\mathcal{P}$ such that $P\subset \bigcap\mathcal{P}'$.
Since $a\not\in P_a$ for every $a\in A$, we have $A\cap \bigcap\mathcal{P}'=A\cap \bigcap_{a\in A}{P_a}=\emptyset$.
This
shows that $A \cap P = \emptyset$, and hence $p \notin \overline{A}$.
\end{proof}

\begin{remark}
The converse of Proposition \ref{proposition:p-ord->psi<a} does not hold in general. Indeed, there exists a filter space $Z_p$ such that $\psi(Z_p)\leq a(Z_p)$, yet  $Z_p$ does not admit a continuous weak selection; see Example \ref{exotic}.
\end{remark}

For a filter $p$ on a set $X$, let $\| p\| $ denote the cardinal $\min \{ |P| :P \in p\} $ following \cite[p.144]{comfort-negrepontis:1974}.
A straightforward proof of the next proposition is omitted. 
\begin{proposition}
\label{basic:proposition:about:|p|}
$\max\{ a(X_p), \psi(X_p) \} \leq \| p\|  \leq |X_p|$
for every filter $p$ on a set $X$.
\end{proposition}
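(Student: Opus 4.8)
The plan is to exploit that $p$ is the only non-isolated point of $X_p$, which collapses the extrema defining $a(X_p)$ and $\psi(X_p)$ onto $p$ itself, and then to produce explicit witnesses from a single smallest member of $p$. First I would record two preliminary observations. Since $p$ is free, $\bigcap p=\emptyset$, and consequently every member of $p$ is infinite: a finite $P\in p$ together with finitely many members $Q_x$ ($x\in P$) avoiding the respective points of $P$ would intersect to $\emptyset$, contradicting $\emptyset\notin p$; hence $\|p\|$ is infinite. Moreover, as every point of $X$ is isolated in $X_p$ (so a $G_1$-point, and irrelevant to the supremum once $\|p\|\ge\omega$), we have $a(X_p)=a(p,X_p)$ and $\psi(X_p)=\psi(p,X_p)$. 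Thus the proposition reduces to the three inequalities $a(p,X_p)\le\|p\|$, $\psi(p,X_p)\le\|p\|$, and the trivial $\|p\|\le|X_p|$. Throughout I would fix $P_0\in p$ with $|P_0|=\|p\|$.

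For $a(p,X_p)\le\|p\|$ I would simply take $A=P_0$. Since the neighborhoods of $p$ are exactly the sets $P\cup\{p\}$ with $P\in p$, the condition $p\in\overline{A}$ is equivalent to $A\cap P\neq\emptyset$ for all $P\in p$; and because $p$ is a filter, $P\cap P_0\in p$ is nonempty, whence $A\cap P\supset P\cap P_0\neq\emptyset$. As $|A|=\|p\|$, this yields the bound.

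For $\psi(p,X_p)\le\|p\|$ I would use freeness to separate each point of $P_0$ from $p$: for every $x\in P_0$ pick $Q_x\in p$ with $x\notin Q_x$, which is possible since $\bigcap p=\emptyset$. Then the open sets $P_0\cup\{p\}$ and $\{Q_x\cup\{p\}:x\in P_0\}$ have intersection equal to $\{p\}$, because any $y\in X$ lying in the intersection would belong to $P_0$ yet avoid its own $Q_y$, a contradiction. This realizes $\{p\}$ as an intersection of $\|p\|$ many open sets, giving $\psi(p,X_p)\le\|p\|$. Finally $\|p\|\le|X_p|$ holds because any $P\in p$ is a subset of $X\subset X_p$. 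The only genuine subtlety is the pseudo-character step, where freeness of $p$ is precisely what is needed to pin $\{p\}$ down with only $\|p\|$ many open sets; the infinitude of $\|p\|$ is likewise what makes the cardinal arithmetic $|P_0|+|P_0|=|P_0|$ work and what renders the isolated points harmless to $\psi(X_p)$.
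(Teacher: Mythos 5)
Your proof is correct; the paper itself omits the argument, declaring it ``straightforward,'' so there is no authorial proof to diverge from. Your verification --- reducing $a(X_p)$ and $\psi(X_p)$ to the single non-isolated point $p$, witnessing $a(p,X_p)\le\|p\|$ by a minimal-size $P_0\in p$, and pinning down $\{p\}$ with the $\|p\|$-many open sets $P_0\cup\{p\}$ and $Q_x\cup\{p\}$ ($x\in P_0$) using freeness of $p$ --- is exactly the routine computation the authors had in mind, with all the small points (infinitude of $\|p\|$, the need to include $P_0\cup\{p\}$ itself in the $G_\kappa$ witness) handled properly.
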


\begin{proposition}
\label{corollary:dispersion-character}
Let $X_p$ be a filter space such that $a(X_p)=\| p\|=\kappa $.
Then there exists a nested subfamily  $\{P_\alpha : \alpha < \kappa \} \subset p$ satisfying $P_\alpha \subsetneq \bigcap_{\beta<\alpha} P_\beta$ for every  $\alpha < \kappa$ and $\bigcap_{\alpha < \kappa} P_\alpha = \emptyset$.
In particular, $X_p$ is weakly $p$-orderable by Theorem \ref{lemma:filter-sp-suff-cond-wo}. 
\end{proposition}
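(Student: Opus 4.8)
The plan is to build the chain by the most hands-on route possible: fix a single smallest member of $p$ and strip off its points one index at a time. The point of doing it this way is that it sidesteps the only genuine danger in such a transfinite construction, namely that at a limit stage $\alpha$ the partial intersection $\bigcap_{\beta<\alpha}P_\beta$ might fall out of $p$ (a filter need not be closed under intersections of fewer than $\kappa$ members), which would make it impossible to continue strictly downward inside $p$.

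First I would invoke the hypothesis $\|p\|=\kappa$ to fix a member $P^*\in p$ with $|P^*|=\kappa$ (the minimum defining $\|p\|$ is attained), and enumerate it without repetitions as $P^*=\{x_\gamma:\gamma<\kappa\}$. For each $\alpha<\kappa$ I simply put $P_\alpha=P^*\setminus\{x_\gamma:\gamma\le\alpha\}$. Everything about nestedness, strictness and the empty intersection is then pure bookkeeping. Since the removed initial segments increase strictly with $\alpha$, a direct computation gives, for every $\alpha$ with $0<\alpha<\kappa$, the identity $\bigcap_{\beta<\alpha}P_\beta=P^*\setminus\{x_\gamma:\gamma<\alpha\}$; the combinatorial core here is $\bigcup_{\beta<\alpha}\{x_\gamma:\gamma\le\beta\}=\{x_\gamma:\gamma<\alpha\}$, which holds at successor and limit $\alpha$ alike. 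Because the enumeration is injective, $x_\alpha$ still lies in $\bigcap_{\beta<\alpha}P_\beta$ while $x_\alpha\notin P_\alpha$, so $P_\alpha=\bigl(\bigcap_{\beta<\alpha}P_\beta\bigr)\setminus\{x_\alpha\}\subsetneq\bigcap_{\beta<\alpha}P_\beta$; the case $\alpha=0$ reads $P_0\subsetneq X$ and is immediate. Finally $\bigcap_{\alpha<\kappa}P_\alpha=P^*\setminus\{x_\gamma:\gamma<\kappa\}=\emptyset$.

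The single step that uses the other hypothesis — and the place where any weaker design would stall — is checking $P_\alpha\in p$ for every $\alpha<\kappa$; this is exactly what replaces an appeal to completeness of $p$ at limit stages. Here I would observe that $|\{x_\gamma:\gamma\le\alpha\}|=|\alpha+1|<\kappa=a(X_p)$ for $\alpha<\kappa$, so this set cannot have $p$ in its closure; by the definition of the approaching number there is $Q\in p$ with $Q\cap\{x_\gamma:\gamma\le\alpha\}=\emptyset$, and then $Q\cap P^*\in p$ is contained in $P^*\setminus\{x_\gamma:\gamma\le\alpha\}=P_\alpha$, whence $P_\alpha\in p$ by upward closure. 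The design of taking each $P_\alpha$ to be $P^*$ minus an initial segment is precisely what makes every partial intersection of the chain literally equal to $P^*$ minus a set of size $<\kappa$, so the approaching-number hypothesis applies uniformly; in particular the argument goes through verbatim whether or not $\kappa$ is regular.

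With the chain constructed, $\{P_\alpha:\alpha<\kappa\}$ is a nested subfamily of $p$ with empty intersection, so the concluding ``in particular'' is immediate from Theorem \ref{lemma:filter-sp-suff-cond-wo}. I expect the only (and very mild) obstacle to be the limit-stage membership $P_\alpha\in p$, which the explicit ``initial-segment'' form of the $P_\alpha$ is engineered to dissolve.
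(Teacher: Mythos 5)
Your proof is correct and is essentially the paper's own argument: both enumerate a minimal-cardinality member of $p$ as $\{x_\gamma:\gamma<\kappa\}$, take the $P_\alpha$ to be that set minus an initial segment of the enumeration, and use $a(X_p)=\kappa$ to find $Q\in p$ disjoint from the (size $<\kappa$) removed segment so that $Q\cap P^*\subset P_\alpha$ gives $P_\alpha\in p$. The only difference is a harmless shift of index (you remove $\{x_\gamma:\gamma\le\alpha\}$ where the paper removes $\{x_\gamma:\gamma<\alpha\}$).
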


\begin{proof}
Let 
$P=\{x_\alpha : \alpha < \kappa  \}$
be a faithfully enumerated element of $p$ witnessing the equality 
$\| p\|=\kappa $.
For every $\alpha < \kappa$, let $P_\alpha = \{ x_\beta :  \alpha \leq \beta < \kappa \}$.
Then $P_\alpha \in p$ for $\alpha < \kappa$.
Indeed, since $\alpha < \kappa = a(X_p)$, we have 
$p \notin \overline{\{ x_\gamma : \gamma  < \alpha \}}$ in $X_p$, and  there exists $Q \in p$ with 
$\{ x_\gamma : \gamma < \alpha\} \cap Q =\emptyset$.
Then $Q\cap P \subset P_\alpha $ and hence $P_\alpha \in p$. 
Thus $\{P_\alpha:\alpha<\kappa \}$ is the required nested subfamily of $p$.
\end{proof}

\begin{remark}
\label{remark:converse-a=delta->nested}
There exists a weakly $p$-orderable filter space $Z_p$ such that $a(Z_p) < \|p\|$;
see Example \ref{example:psi<a<card}.
\end{remark}

Proposition  \ref{corollary:dispersion-character} is applicable to 
all ultrafilters.

\begin{proposition}
\label{proposition:ultrafilter-dispersion-ch}
For every ultrafilter $p$ on a set $X$, 
we have $a(X_p) =\| p\| $.
\end{proposition}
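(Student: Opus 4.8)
The plan is to prove the two inequalities $a(X_p) \le \|p\|$ and $\|p\| \le a(X_p)$ separately. The first is already available to us: it is a special case of the inequality $\max\{a(X_p),\psi(X_p)\} \le \|p\|$ recorded in Proposition \ref{basic:proposition:about:|p|}, which holds for an arbitrary filter $p$. Thus the entire content of the statement lies in the reverse inequality, and it is here that the hypothesis that $p$ is an \emph{ultra}filter must enter.

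The key step is to identify, for an ultrafilter $p$, exactly which subsets $A$ of $X$ satisfy $p \in \overline{A}$. First I would unwind the definition of the topology on $X_p$: since the basic neighborhoods of $p$ are the sets $P \cup \{p\}$ with $P \in p$, and since $A \subset X_p \setminus \{p\} = X$, the condition $p \in \overline{A}$ is equivalent to saying that $A \cap P \neq \emptyset$ for every $P \in p$. Then I would invoke the defining dichotomy of an ultrafilter: for each $A \subset X$, either $A \in p$ or $X \setminus A \in p$. If $A \notin p$, then $X \setminus A \in p$ while $A \cap (X \setminus A) = \emptyset$, so $A$ misses a member of $p$ and hence $p \notin \overline{A}$. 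Conversely, if $A \in p$, then $A \cap P \in p$ for every $P \in p$ because $p$ is closed under finite intersections, whence $A \cap P \neq \emptyset$ since $\emptyset \notin p$; this gives $p \in \overline{A}$. This establishes that for an ultrafilter $p$ one has $p \in \overline{A}$ if and only if $A \in p$.

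Finally, since $p$ is the unique non-isolated point of $X_p$, we have $a(X_p) = a(p,X_p) = \min\{|A| : A \subset X,\ p \in \overline{A}\}$, and by the equivalence just established this equals $\min\{|A| : A \in p\} = \|p\|$, which completes the argument.

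Because the only non-routine ingredient is the ultrafilter dichotomy, I do not expect a genuine obstacle here; the one point requiring care is that the hypothesis that $p$ is an ultrafilter is used \emph{only} in the direction showing $A \notin p \Rightarrow p \notin \overline{A}$ (an arbitrary filter yields just $a(X_p) \le \|p\|$, and the reverse inequality can fail, as Remark \ref{remark:converse-a=delta->nested} indicates). Accordingly, I would make sure the dichotomy is applied precisely at that step.
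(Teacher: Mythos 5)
Your proposal is correct and follows essentially the same route as the paper: the inequality $a(X_p)\le\|p\|$ is quoted from Proposition \ref{basic:proposition:about:|p|}, and the reverse inequality is obtained by applying the ultrafilter dichotomy to a set $A$ with $A\notin p$ to conclude $X\setminus A\in p$ and hence $p\notin\overline{A}$. The only cosmetic difference is that you package this as the full equivalence $p\in\overline{A}\Leftrightarrow A\in p$, of which the paper uses just the one direction it needs.
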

 
\begin{proof}
Let $p$ be an ultrafilter on a set $X$.
Since $a(X_p) \leq \| p\| $ 
by Proposition \ref{basic:proposition:about:|p|},
it suffices to show $a(X_p) \geq \| p\| $.
Let $A$ be a set with $|A|<\| p\| $.
Then we have $A \notin p$. Since $p$ is an ultrafilter, $X\setminus A \in p$. This shows that $p \notin \overline{A}$ in $X_p$. Thus we have $a(X_p) \geq \| p\| $.
\end{proof}

From Propositions \ref{corollary:dispersion-character} and \ref{proposition:ultrafilter-dispersion-ch}, we get the following

\begin{corollary}[{\cite[Corollary 3.4]{garcia-miyazaki-nogura-tomita:2013}}]
\label{ultrafilter:is:weakly:orderable}
For every ultrafilter $p$ on a set $X$, the filter space $X_p$ is weakly orderable.
\end{corollary}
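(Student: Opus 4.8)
The goal is to show that for every ultrafilter $p$ on a set $X$, the filter space $X_p$ is weakly orderable. The plan is to reduce this to the weak $p$-orderability established in the preceding results. First I would invoke Proposition \ref{proposition:ultrafilter-dispersion-ch}, which asserts that $a(X_p)=\|p\|$ for every ultrafilter $p$. This equality is exactly the hypothesis required to apply Proposition \ref{corollary:dispersion-character}.

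Next I would feed this into Proposition \ref{corollary:dispersion-character} with $\kappa = a(X_p)=\|p\|$. That proposition produces a nested subfamily $\{P_\alpha : \alpha<\kappa\}\subset p$ with strictly decreasing intersections and $\bigcap_{\alpha<\kappa}P_\alpha=\emptyset$, and moreover concludes that $X_p$ is weakly $p$-orderable. Since weak $p$-orderability is by definition a form of weak orderability (by a linear order in which $p$ is $\preceq$-maximal or $\preceq$-minimal), the weak $p$-orderability of $X_p$ immediately yields that $X_p$ is weakly orderable, which is the desired conclusion.

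Thus the entire argument is a two-line chain: Proposition \ref{proposition:ultrafilter-dispersion-ch} verifies the hypothesis $a(X_p)=\|p\|$ of Proposition \ref{corollary:dispersion-character}, and the latter delivers weak $p$-orderability, hence weak orderability. There is essentially no obstacle remaining at this level, since all the substantive work has already been carried out in the two cited propositions; the corollary is purely a matter of combining them. The only point requiring a moment's care is the observation that weak $p$-orderability formally implies weak orderability, but this is immediate from the definition of weak $p$-orderability as weak orderability by a linear order with $p$ extremal.
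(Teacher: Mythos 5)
Your proposal is correct and follows exactly the paper's own derivation: the paper obtains this corollary precisely by combining Proposition~\ref{proposition:ultrafilter-dispersion-ch} (which gives $a(X_p)=\|p\|$ for an ultrafilter $p$) with Proposition~\ref{corollary:dispersion-character} (which yields weak $p$-orderability from that equality). The only minor remark is that the paper also notes an alternative route via Proposition~\ref{proposition:ultra-weak-ord}, but your chosen chain is the paper's primary argument.
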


This corollary also follows 
from the next result of independent interest.

\begin{proposition}
\label{proposition:ultra-weak-ord}
If $p$ is an ultrafilter on $X$,
then every linear order on $X$ can be extended to a linear order $\preceq$ on $X_p$ such that $\tau_\preceq$ is coarser than the topology of $X_p$.
\end{proposition}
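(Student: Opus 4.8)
The plan is to insert the point $p$ at the cut of the linearly ordered set $(X,\le)$ that is singled out by the ultrafilter $p$, and then to verify directly that every subbasic order-ray becomes open in $X_p$.

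First I would exploit the ultrafilter hypothesis to split $X$ into a lower and an upper part. For each $x\in X$ the three sets $\{z\in X:z<x\}$, $\{x\}$ and $\{z\in X:x<z\}$ partition $X$; since $p$ is free we have $\{x\}\notin p$, and since $p$ is an ultrafilter exactly one of the two remaining sets belongs to $p$. This allows me to define
$$L=\{x\in X:\{z\in X:x<z\}\in p\},\qquad U=\{x\in X:\{z\in X:z<x\}\in p\},$$
so that $L$ and $U$ partition $X$. A one-line monotonicity argument shows that $L$ is downward closed: if $x\in L$ and $x'<x$, then $\{z\in X:x'<z\}\supseteq\{z\in X:x<z\}\in p$, whence $x'\in L$. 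Consequently $(L,U)$ is a cut of $(X,\le)$, and I let $\preceq$ be the order on $X_p$ that agrees with $\le$ on $X$ and places $p$ in this cut, i.e.\ $x\prec p$ for $x\in L$ and $p\prec x$ for $x\in U$. It is routine to check that $\preceq$ is a linear order extending $\le$.

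The heart of the proof is to show $\tau_\preceq\subset\tau$, where $\tau$ denotes the topology of $X_p$; for this it suffices to check that each subbasic ray $(\leftarrow,x)_\preceq$ and $(x,\rightarrow)_\preceq$, for $x\in X_p$, lies in $\tau$. Every ray that does not contain $p$ is a subset of the discrete set $X$ and is therefore automatically open; in particular $(\leftarrow,p)_\preceq=L$ and $(p,\rightarrow)_\preceq=U$ are open. The only rays that require work are those containing $p$. For $x\in U$ we have $p\prec x$, and since $\preceq$ extends $\le$ the trace $(\leftarrow,x)_\preceq\cap X$ equals $\{z\in X:z<x\}$, which lies in $p$ by the definition of $U$; hence $(\leftarrow,x)_\preceq$ contains a basic neighbourhood of $p$ and is open. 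Symmetrically, for $x\in L$ the ray $(x,\rightarrow)_\preceq$ contains $p$ and satisfies $(x,\rightarrow)_\preceq\cap X=\{z\in X:x<z\}\in p$, so it is open as well. Thus all subbasic rays are open and $\tau_\preceq\subset\tau$.

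I expect the single real subtlety to be the placement of $p$. A naive attempt making $p$ the $\preceq$-maximum (or minimum) fails: it would force the final segment $\{z\in X:x<z\}$ to belong to $p$ for every $x\in X$, which is false in general. The ultrafilter hypothesis is exactly what produces the dichotomy partitioning $X$ into $L$ and $U$ so that the two families of rays through $p$ are simultaneously $p$-large; for a filter that is not an ultrafilter this dichotomy breaks down, which explains why the statement is restricted to ultrafilters. Everything else — that the inserted relation is a linear order and that the traces of the rays on $X$ are the stated initial and final segments — is a routine verification.
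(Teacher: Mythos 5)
Your proof is correct and follows essentially the same route as the paper's: you insert $p$ at the cut of $(X,\le)$ determined by the ultrafilter dichotomy (for each $x$, exactly one of $\{z:z<x\}$, $\{z:x<z\}$ lies in $p$) and then check that every order-ray is open in $X_p$. The only difference is that you spell out in more detail the verification that the resulting relation is a linear order (via the downward-closedness of $L$) and that the rays through $p$ contain basic neighbourhoods of $p$, which the paper leaves implicit.
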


\begin{proof}
Let $\leq$ be a linear order on $X$.
Define a linear order $\preceq$ on $X_p$ as follows.
For $x,y\in X$, let $x\preceq y$ if and only if $x\le y$.
For each $x\in X$, 
since $p$ is a free ultrafilter on $X$,
either $(\leftarrow,x)_\leq \in p$ or $(x,\rightarrow)_\leq \in p$;
we define $p\preceq x$ in the former case and $x\preceq p$ in the latter case.
Finally, we also let $p\preceq p$.
By our definition, both intervals
$(x,\rightarrow)_\preceq$ and $(\leftarrow,x)_\preceq$ are open in $X_p$ for every $x \in X_p$.
Thus, the topology $\tau_\preceq$ generated by $\preceq$ is coarser than that of $X_p$.	\end{proof}

The following proposition follows from \cite[Corollary 3.4 and Theorem 3.7]{garcia-miyazaki-nogura-tomita:2013}. We give a direct proof here for the sake of completeness.

\begin{proposition}
\label{proposition:ultra-not-subord}
If $p$ is an ultrafilter on $X$, then
$X_p$ is not (sub)orderable.
\end{proposition}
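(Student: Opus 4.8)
The plan is to reduce the statement to non-orderability and then derive a contradiction from a purely combinatorial feature of ultrafilters. First, by Remark \ref{lemma105:product-selection}~(i), every suborderable filter space is orderable, so the parenthetical ``(sub)'' costs nothing: it suffices to prove that $X_p$ is not orderable. I would argue by contradiction, assuming that $\tau_\preceq$ coincides with the topology of $X_p$ for some linear order $\preceq$ on the set $X_p$ (a linear order is in particular a selection relation, so the interval notation $(\leftarrow,p)_\preceq$, $(p,\rightarrow)_\preceq$ applies).

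Next I would exploit the defining property of an ultrafilter through the closure operator in $X_p$: for $S\subseteq X$ one has $p\in\overline{S}$ if and only if every member of $p$ meets $S$, i.e. if and only if $X\setminus S\notin p$, i.e. (since $p$ is an ultrafilter) if and only if $S\in p$. Applying this to the two sets $L=(\leftarrow,p)_\preceq$ and $R=(p,\rightarrow)_\preceq$, which partition $X$, shows that $p$ lies in the closure of exactly one of them; say $p\in\overline{L}$ and $p\notin\overline{R}$. Thus $p$ is approached from only one side in $\preceq$: since $p\notin\overline{R}$, the order structure forces $p$ either to be the $\preceq$-maximum or to possess an immediate successor. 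In either case the basic order-neighborhoods of $p$ may be taken to be the left intervals with a fixed right endpoint, and their traces on $X$ are exactly the sets $L_a=\{x\in L: a\prec x\}$ for $a\in L$. These form a base for the neighborhood filter of $p$, hence a base for the filter $p$ itself, and because $\preceq$ is linear this base is \emph{nested}.

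The crux, and the main obstacle, is the combinatorial fact that a free ultrafilter admits no nested base. To prove it I would pass to a strictly $\subseteq$-decreasing cofinal well-ordered subchain $\langle B_\alpha:\alpha<\kappa\rangle$ of the nested base, where $\kappa$ is the (regular, infinite) cofinality of the chain; the absence of a $\subseteq$-least base element, and hence $\bigcap_{\alpha<\kappa}B_\alpha=\emptyset$, follows from freeness of $p$. The differences $D_\alpha=B_\alpha\setminus B_{\alpha+1}$ are then nonempty (strict decrease) and pairwise disjoint (for $\alpha<\alpha'$ one has $B_{\alpha'}\subseteq B_{\alpha+1}$). Partitioning $\kappa$ into two cofinal subsets $S_0,S_1$, I would set $A=\bigcup_{\alpha\in S_0}D_\alpha$ and check that $A\notin p$ and $X\setminus A\notin p$: any $B_\beta$ contains $D_\alpha$ for cofinally many $\alpha$ in each of $S_0$ and $S_1$, so by disjointness no $B_\beta$ can be contained in $A$ or in its complement. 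This contradicts the ultrafilter property and completes the argument. The reduction and one-sidedness steps are routine; it is the verification of the no-nested-base claim that carries the real weight of the proof.
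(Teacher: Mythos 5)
Your argument is correct, and its combinatorial core --- splitting a cofinal chain into two disjoint cofinal pieces, observing that every neighborhood of $p$ must meet both while the ultrafilter must contain one of them --- is the same engine that drives the paper's proof. The packaging, however, is genuinely different. The paper works directly in the ambient linearly ordered space $(Y,\preceq)$ containing $X_p$ and builds two interleaved cofinal transfinite sequences $c_\beta \prec d_\beta \prec c_\gamma$ inside $I=(\leftarrow,p)_\preceq\cap X_p$; it never reduces to orderability and is self-contained. You instead first invoke Remark \ref{lemma105:product-selection}~(i) to reduce to the orderable case, convert the order data into a nested base for $p$ (essentially re-deriving one direction of the nested-base characterization of $p$-orderability recalled in Remark \ref{remark:filter-sp-ord}), and then isolate a clean, reusable combinatorial lemma: \emph{a free ultrafilter admits no nested base}. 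Your route buys modularity --- the lemma about nested bases is of independent interest and the topology disappears early --- at the cost of leaning on the external theorem behind Remark \ref{lemma105:product-selection}~(i), which the paper deliberately avoids (it states that it gives a direct proof ``for the sake of completeness''). Two small points you should make explicit: in the one-sidedness step, $p$ cannot have an immediate predecessor (otherwise $p$ would be isolated, contradicting freeness of the filter), so every trace $L_a=\{x\in L: a\prec x\}$ is indeed nonempty and belongs to $p$; and the well-ordered cofinal subchain of the nested base has no $\subseteq$-least element precisely because such an element would be contained in $\bigcap p=\emptyset$, so its cofinality $\kappa$ is an infinite regular cardinal, as your argument requires.
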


\begin{proof}

Suppose for contradiction that $p$ is an ultrafilter and $X_p$ is suborderable.
Take a linearly ordered topological space
$(Y,\preceq)$ such that $X_p$ is a subspace of $Y$.
Since $p$ is a free ultrafilter, either
$(\leftarrow,p)_\preceq\cap X_p \in p$ or 
$(p,\rightarrow)_\preceq\cap X_p \in p$.
Without loss of generality, we may assume 
$(\leftarrow,p)_\preceq\cap X_p \in p$.
Then $(\leftarrow,p]_\preceq\cap X_p$ is open in $X_p$.

Note that  $(x,p)_\preceq \cap X_p \ne \emptyset$ for every $x \in (\leftarrow,p)_\preceq$ since $(x,\rightarrow)_\preceq \cap X_p $ is a neighborhood of $p$ and $p$ is a non-trivial filter.

We claim that there exist an ordinal $\alpha$ and disjoint cofinal subsets
$\{c_\beta : \beta<\alpha \}$ and  $\{d_\beta : \beta<\alpha \}$ of $I=(\leftarrow,p)_\preceq\cap X_p$ such that $c_\beta \prec d_\beta \prec c_\gamma$ if $\beta <\gamma <\alpha$.
Indeed, take $c_0, d_0 \in I$ with $c_0 \prec d_0$, 
and assume that $c_\beta$ and $d_\beta$ has been taken for 
$\beta<\gamma$.
If $I  \subset \bigcup _{\beta <\gamma } (\leftarrow, d_\beta]_\preceq$, then $\{c_\beta : \beta<\gamma \}$ and  $\{d_\beta : \beta<\gamma \}$ are the required cofinal subsets of $I$ (if one takes $\alpha=\gamma$).
If $I \not \subset \bigcup _{\beta <\gamma } (\leftarrow, d_\beta]_\preceq$, then we can select  $c_\gamma , d_\gamma \in I \setminus  \bigcup _{\beta <\gamma } (\leftarrow, d_\beta]_\preceq$ so that $c_\gamma \prec d_\gamma$. 
According to this construction,
we have  $I  \subset \bigcup _{\beta <\alpha } (\leftarrow, d_\beta]_\preceq$ for some ordinal $\alpha < |I|^+$. Then  $\alpha$, 
$\{c_\beta : \beta<\alpha \}$ and  $\{d_\beta : \beta<\alpha \}$ are as required.

Let $C=\{c_\beta : \beta < \alpha\}$.
Then both $C$ and $I \setminus C$ are cofinal sets in $(I,\preceq)$.
Since $p$ is an ultrafilter,
either $C \in p$ or $I \setminus C  \in p$. 
If $C \in p$, then there exists $x \in Y$ such that $(x,p]_\preceq \cap X_p \subset C\cup \{p\}$, which contradicts the fact that 
$I \setminus C$ is a cofinal set in $I=(\leftarrow,p)_\preceq\cap X_p$.
Similarly, we also have a contradiction in the case when
 $I \setminus C  \in p$.
\end{proof}

\begin{example}
There exists a  weakly $p$-orderable filter space $X_p$ which is not orderable.
Indeed, let $X=\omega$ with the usual order $\leq$ and $p$ be an ultrafilter containing  the Fr\'echet filter
$\{A\subset X:|X\setminus A|<\omega \}$.
Then the order $\leq$ can be extended to
a linear order $\preceq$ on $X_p$ as in Proposition \ref{proposition:ultra-weak-ord}, so $X_p$ is weakly $p$-orderable by $\preceq$. On the other hand, $X_p$ is not orderable by Proposition \ref{proposition:ultra-not-subord}.
\end{example}

\begin{remark}
\label{remark:ord-weak-p-ord}
There exists an orderable filter space $Z_p$ which is not weakly $p$-orderable.
Indeed,  there exists an orderable filter space $Z_p$ such that $a(Z_p) =\omega < \omega_1 =\psi(Z_p)$; see Example \ref{example:a<psi<card}.
Then $Z_p$ is not weakly $p$-orderable by Proposition \ref{proposition:p-ord->psi<a}.
\end{remark}

The following diagram summarizes main results in this section
that hold for all filter spaces.

\begin{center} \medskip\hspace{1em}\xymatrix{
&  \text{$p$-orderable}\ar[r]\ar[d]&\text{orderable}\ar[d]\\
a=\|p\|\ar[r]&	\text{weakly $p$-orderable}\ar[r]\ar[d]&\text{weakly orderable}\ar[d]\\
& \psi\le a  & \text{admits a continuous weak selection}
  }
\\
\bigskip
Diagram 1. Implications that hold for filter spaces.
\end{center}

\section{The uniform filter $p^\kappa(X)$}
\label{uniform:filter}

For a set $X$ and an infinite cardinal $\kappa$ with $\kappa \leq |X|$, we follow  \cite[p.144]{comfort-negrepontis:1974} to denote by $p^\kappa(X)$ the filter $\{A\subset X:|X\setminus A|<\kappa \}$ on $X$.
For simplicity, we use $p^\kappa$ instead of $p^\kappa (X)$ if there is no confusion. 

The next proposition computes cardinal invariants $a$ and $\psi$ of the filter space $X_{p^\kappa}$ showing that they are independent of each other.

\begin{proposition}
\label{a:psi:in:p(kappa)}
Let $X$ be a set and $\kappa$  an infinite cardinal satisfying $\kappa\le |X|$.
\begin{enumerate}[{\rm (i)}]
\item \label{a:psi:in:p(kappa)-1}$a(X_{p^\kappa})=\kappa$ and $\|p^\kappa\|=|X|$. 
\item \label{a:psi:in:p(kappa)-2}
If $|X|=\kappa$, then $\psi(X_{p^\kappa})=\cf(\kappa)$.
\item \label{a:psi:in:p(kappa)-3}
If $|X|>\kappa$, then 
$\psi(X_{p^\kappa})=|X|$.
\end{enumerate}
\end{proposition}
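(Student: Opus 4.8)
The plan is to reduce all three computations to two elementary ingredients: a description of the open neighbourhoods of the point $p$, and standard cardinal arithmetic. Throughout I would use that a set $U \ni p$ is open in $X_{p^\kappa}$ precisely when $U = Q \cup \{p\}$ for some $Q \in p^\kappa$, that is, when $|X \setminus Q| < \kappa$; this is immediate from the definition of the filter topology together with the fact that every point of $X$ is isolated.

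For part \eqref{a:psi:in:p(kappa)-1} I would first observe that every $P \in p^\kappa$ satisfies $|X \setminus P| < \kappa \le |X|$, whence $|P| = |X|$ by infinite cardinal arithmetic; thus $\|p^\kappa\| = |X|$. For the equality $a(X_{p^\kappa}) = \kappa$, the key point is that for $A \subset X$ one has $p \notin \overline{A}$ if and only if some basic neighbourhood $Q \cup \{p\}$ misses $A$, i.e. $A \subset X \setminus Q$ for some $Q \in p^\kappa$, which happens exactly when $A$ is contained in a set of cardinality $< \kappa$, i.e. when $|A| < \kappa$. Hence the least cardinality of a set $A$ with $p \in \overline{A}$ is exactly $\kappa$.

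For parts \eqref{a:psi:in:p(kappa)-2} and \eqref{a:psi:in:p(kappa)-3} I would handle both cases at once by first reducing $\psi(X_{p^\kappa})$ to a covering number. Since every point of $X$ is isolated, $\psi(X_{p^\kappa})$ is controlled by the non-isolated point $p$, and using the description of neighbourhoods one computes $\bigcap_{i<\lambda}(Q_i \cup \{p\}) = \left(\bigcap_{i<\lambda} Q_i\right) \cup \{p\}$, so $p$ is a $G_\lambda$-point exactly when there are $Q_i \in p^\kappa$ ($i < \lambda$) with $\bigcap_{i<\lambda} Q_i = \emptyset$. Passing to the complements $B_i = X \setminus Q_i$, each of cardinality $< \kappa$, this says $\bigcup_{i<\lambda} B_i = X$. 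Therefore $\psi(X_{p^\kappa})$ equals the least number of subsets of $X$ of cardinality $< \kappa$ whose union is $X$.

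It remains to evaluate this covering number, which is the only real work. For the upper bounds: when $|X| = \kappa$ I would fix an increasing cofinal sequence $(\kappa_\alpha)_{\alpha < \cf(\kappa)}$ in $\kappa$ and an enumeration $X = \{x_\beta : \beta < \kappa\}$, and cover $X$ by the $\cf(\kappa)$ initial segments $\{x_\beta : \beta < \kappa_\alpha\}$, each of size $|\kappa_\alpha| < \kappa$; when $|X| > \kappa$, the $|X|$ singletons already cover $X$. The lower bounds are the main obstacle, and both rest on the definition of cofinality. If $X = \bigcup_{i<\lambda} B_i$ with $|B_i| < \kappa$, then $|X| \le \sum_{i<\lambda} |B_i|$. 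In case \eqref{a:psi:in:p(kappa)-2}, if $\lambda < \cf(\kappa)$ then $\{|B_i| : i < \lambda\}$ is a family of fewer than $\cf(\kappa)$ cardinals below $\kappa$, hence bounded by some $\mu < \kappa$, so the union has cardinality at most $\max(\lambda, \mu) < \kappa$, contradicting $|X| = \kappa$; thus $\lambda \ge \cf(\kappa)$. In case \eqref{a:psi:in:p(kappa)-3}, from $|X| \le \lambda \cdot \kappa = \max(\lambda, \kappa)$ together with $|X| > \kappa$ we must have $\lambda \ge |X|$. Combining the bounds yields $\psi(X_{p^\kappa}) = \cf(\kappa)$ in case \eqref{a:psi:in:p(kappa)-2} and $\psi(X_{p^\kappa}) = |X|$ in case \eqref{a:psi:in:p(kappa)-3}.
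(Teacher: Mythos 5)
Your proof is correct and follows essentially the same route as the paper: both reduce $\psi(X_{p^\kappa})$ to the least number of sets of cardinality $<\kappa$ needed to cover $X$ and then finish with elementary cardinal arithmetic, and both compute $a(X_{p^\kappa})$ and $\|p^\kappa\|$ directly from the description of the neighbourhoods of $p^\kappa$. The only immaterial variation is in part (iii), where you bound $|X|\le\lambda\cdot\kappa=\max(\lambda,\kappa)$ directly, whereas the paper invokes the regularity of the successor cardinals $\lambda^+$ for $\kappa\le\lambda<|X|$ and takes a supremum.
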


\begin{proof}
(\ref{a:psi:in:p(kappa)-1})
The fact that $\|p^\kappa\|=|X|$ follows from $|X|\ge\kappa$.
Let
$A\subset X$ and $|A|=\kappa$. Then
$A \cap P \ne \emptyset$ for every $P \in p^\kappa$,
which shows that $p^\kappa \in \overline{A}$,
and hence $a(X_{p^\kappa}) \leq \kappa$. 	
On the other hand, for every  $B\subset X$ with $|B|<\kappa$,
we have $X\setminus B\in p^\kappa$ and $B\cap (X\setminus B)=\emptyset$, which shows $p^\kappa\notin \overline{B}$.
Thus, $a(X_{p^\kappa}) \geq \kappa$.

(\ref{a:psi:in:p(kappa)-2}) 
It is well known that
$\cf (\kappa) = \min \{|\mathcal{A}| : \ \bigcup \mathcal{A} =\kappa$ and $|A|<\kappa$ for all $A\in\mathcal{A}\}$;
see, for example, \cite[Lemma 3.6]{jech:1997}.
This and $|X|= \kappa$ imply 
\begin{align*}
\psi(X_{p^\kappa})&=\psi(p^\kappa,X_{p^\kappa})=
\min\{|\mathcal{U}| : \mathcal{U}\subset p^\kappa \text{ and } \bigcap\mathcal{U}=\emptyset\}\\
&=
 \min\{|\mathcal{A}|: \bigcup\mathcal{A}=X
\text{ and } |A|<\kappa
\text{ for all } A\in\mathcal{A}
\}= \cf(\kappa).
\end{align*}

(\ref{a:psi:in:p(kappa)-3})
Assume $|X|>\kappa$ and let 
$\lambda$ be a cardinal satisfying $\kappa\le\lambda<|X|$.
For every $\mathcal{U} \subset p^\kappa$
with $\bigcap\mathcal{U}=\emptyset$,
we have  $\lambda^+\leq|X|=|\bigcup_{U\in \mathcal{U}}(X\setminus U)|$.
Since $\lambda^+$ is a regular cardinal and 
$|X\setminus U| <\kappa\leq \lambda $ for every $U \in \mathcal{U}$,
we have $|\mathcal{U}|\geq \lambda^+$.
Hence,
$\psi(X_{p^\kappa}) \geq  \sup \{\lambda ^+ : \kappa\le \lambda < |X| \} =|X|$.
This and Proposition \ref{basic:proposition:about:|p|} imply $\psi(X_{p^\kappa})=|X|$.
\end{proof}

\begin{proposition}
\label{theorem102:product-selection}
For an infinite cardinal $\kappa$ and 
a set $X$ satisfying  $\kappa\leq |X|$, 
the following conditions are equivalent:
\begin{enumerate}[{\rm (a)}]
\item $X_{p^\kappa}$ is weakly $p^\kappa$-orderable;
\item
$X_{p^\kappa}$ admits a (separately) continuous weak selection;
\item
$|X|=\kappa$.
\end{enumerate}
\end{proposition}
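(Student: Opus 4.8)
The plan is to establish the cyclic chain $(a)\Rightarrow(b)\Rightarrow(c)\Rightarrow(a)$, where the first and last implications are short consequences of earlier results and the real content is in $(b)\Rightarrow(c)$. For $(c)\Rightarrow(a)$: if $|X|=\kappa$ then Proposition~\ref{a:psi:in:p(kappa)}(\ref{a:psi:in:p(kappa)-1}) gives $a(X_{p^\kappa})=\kappa=|X|=\|p^\kappa\|$, so Proposition~\ref{corollary:dispersion-character} applies and shows that $X_{p^\kappa}$ is weakly $p^\kappa$-orderable. For $(a)\Rightarrow(b)$: a weakly $p^\kappa$-orderable space is weakly orderable, hence admits a continuous weak selection by \cite[Lemma 7.5.1]{micheal:1951}, and a continuous weak selection is in particular separately continuous (see also Remark~\ref{lemma105:product-selection}(ii)).

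For the crux $(b)\Rightarrow(c)$ the plan is to rule out $|X|>\kappa$ directly. Let $\sigma$ be a separately continuous weak selection on $X_{p^\kappa}$ and $\preceq=\preceq_\sigma$ the associated selection relation, a total antisymmetric relation on $X$; writing $x\to y$ for $x\prec y$, this makes $\preceq$ a tournament on $X$. Split $X=L\cup R$ with $L=\{x\in X:x\prec p\}$ and $R=\{x\in X:p\prec x\}$. Separate continuity means $\tau_\preceq\subset\tau$, and I would extract its local consequences as follows: for $x\in L$ the half-interval $(x,\rightarrow)_\preceq$ is a $\tau$-open neighborhood of $p$, so it contains a basic neighborhood $P\cup\{p\}$ with $P\in p^\kappa$; then $x\prec y$ for every $y\in P$, and since $|X\setminus P|<\kappa$ the point $x$ has fewer than $\kappa$ $\prec$-predecessors in $X$, i.e.\ in-degree $<\kappa$. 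Symmetrically, each $x\in R$ has out-degree $<\kappa$. If $|X|>\kappa$, then $\max\{|L|,|R|\}\geq\kappa^+$, so one of $L,R$ carries a tournament on at least $\kappa^+$ vertices all of whose in-degrees (respectively out-degrees) are $<\kappa$.

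The main obstacle is thus the combinatorial fact that \emph{no tournament on a set of size $>\kappa$ has all out-degrees $<\kappa$} (equivalently all in-degrees $<\kappa$, by reversing every edge), which I would isolate as a lemma and prove by a transfinite greedy construction. Assuming all out-degrees are $<\kappa$ on a vertex set of size $\geq\kappa^+$, choose distinct vertices $v_\alpha$ for $\alpha\leq\kappa$ recursively so that $v_\alpha\to v_\beta$ whenever $\beta<\alpha$; at stage $\alpha\leq\kappa$ the vertices that must be avoided lie in $\bigcup_{\beta<\alpha}(\{v_\beta\}\cup\{u:v_\beta\to u\})$, a set of size at most $|\alpha|\cdot\kappa\leq\kappa<\kappa^+$, so a suitable $v_\alpha$ exists. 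Then $v_\kappa$ dominates all $\kappa$ vertices $\{v_\beta:\beta<\kappa\}$, contradicting that its out-degree is $<\kappa$. Applying the lemma to whichever of $L,R$ has size $\geq\kappa^+$ contradicts the degree bounds above, so $|X|=\kappa$, which is $(c)$.
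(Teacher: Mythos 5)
Your proposal is correct, and its skeleton --- the cycle (a) $\Rightarrow$ (b) $\Rightarrow$ (c) $\Rightarrow$ (a), with (a) $\Rightarrow$ (b) coming from \eqref{four:properties} and (c) $\Rightarrow$ (a) from Propositions \ref{a:psi:in:p(kappa)} and \ref{corollary:dispersion-character} --- is exactly the paper's. The only divergence is in (b) $\Rightarrow$ (c): the paper disposes of this step with the remark that it ``can be proved by the same argument as in \cite[Proposition 3]{douwen:1990}'', whereas you supply that argument in full. Your version is sound: separate continuity makes every half-interval $(x,\rightarrow)_{\preceq_\sigma}$ and $(\leftarrow,x)_{\preceq_\sigma}$ open, so whichever of them contains $p^\kappa$ must contain a member of the filter $p^\kappa$, which gives every $x\prec_\sigma p^\kappa$ fewer than $\kappa$ predecessors and every $x$ with $p^\kappa\prec_\sigma x$ fewer than $\kappa$ successors; and your tournament lemma (no tournament on more than $\kappa$ vertices has all out-degrees below $\kappa$, proved by the transfinite greedy construction of a chain $v_\alpha$, $\alpha\le\kappa$, each dominating all earlier terms) correctly rules out $|X|>\kappa$ once one of $L$, $R$ has size at least $\kappa^+$. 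This is essentially van Douwen's counting argument made explicit rather than a new method, so what your write-up buys is self-containedness; isolating the degree bound as a purely combinatorial statement about tournaments is a clean way to package the key step that the paper leaves to a citation.
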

\begin{proof}
The implication (a) $\Rightarrow$ (b) follows from \eqref{four:properties}, as weakly $p^\kappa$-orderable spaces are weakly orderable.
The implication (b) $\Rightarrow$ (c) 
can be proved by the same argument as in \cite[Proposition 3]{douwen:1990}.
To establish the implication (c) $\Rightarrow$ (a), assume that
$|X|=\kappa$. Then $a(X_{p^\kappa})=\|p^\kappa\|=\kappa$ by  Proposition \ref{a:psi:in:p(kappa)} (\ref{a:psi:in:p(kappa)-1}),
so (a) follows from Proposition \ref{corollary:dispersion-character}.
\end{proof}

\begin{proposition}
\label{proposition:filter-sp-regular-orderable}
For an infinite cardinal $\kappa$ and 
a set $X$ satisfying  $\kappa\leq |X|$, 
the following conditions are equivalent:
\begin{enumerate}[{\rm (a)}]
\item
$X_{p^\kappa}$ is $p^\kappa$-orderable;
\item $X_{p^\kappa}$ is orderable;
\item
$|X|=\kappa$ and $\kappa$
is regular.
\end{enumerate}
\end{proposition}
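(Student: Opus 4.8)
The plan is to establish the three implications (a) $\Rightarrow$ (b) $\Rightarrow$ (c) $\Rightarrow$ (a) cyclically. The implication (a) $\Rightarrow$ (b) is immediate, since a $p^\kappa$-ordering of $X_{p^\kappa}$ is in particular an ordering: one simply forgets that $p^\kappa$ is the maximal element.

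For (b) $\Rightarrow$ (c) I would first obtain $|X|=\kappa$ for free. Orderability implies weak orderability, hence the existence of a continuous weak selection by \eqref{four:properties}, so $|X|=\kappa$ by the implication (b) $\Rightarrow$ (c) of Proposition \ref{theorem102:product-selection}. It then remains to prove that $\kappa$ is regular. Fix a linear order $\preceq$ on $X_{p^\kappa}$ inducing its topology. Since $p^\kappa$ is the unique non-isolated point and every point of $X$ is isolated, $p^\kappa$ can have an immediate neighbour on at most one side; hence $p^\kappa$ is a one-sided limit, and after possibly reversing $\preceq$ I may assume that $L=(\leftarrow,p^\kappa)_\preceq\cap X$ is non-empty with $\sup L=p^\kappa$. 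The key step is to show that the cofinality of the linearly ordered set $(L,\preceq)$ equals $\kappa$. On one hand $\cf(L)\le|L|\le|X|=\kappa$. On the other hand, any cofinal subset $C$ of $L$ satisfies $p^\kappa\in\overline{C}$ in the order topology, so $|C|\ge a(p^\kappa,X_{p^\kappa})=a(X_{p^\kappa})=\kappa$ by the definition of the approaching number together with Proposition \ref{a:psi:in:p(kappa)}~(\ref{a:psi:in:p(kappa)-1}); thus $\cf(L)\ge\kappa$. Therefore $\cf(L)=\kappa$, and since the cofinality of any linear order is a regular cardinal, $\kappa$ is regular.

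For (c) $\Rightarrow$ (a), assume $|X|=\kappa$ with $\kappa$ regular and construct an order on $X_{p^\kappa}$ with $p^\kappa$ maximal whose order topology is exactly the filter topology. The point $p^\kappa$ will be placed on top as the maximum, and the real work is to order $X$ below it as a \emph{discrete} linear order (every point having an immediate predecessor and successor) of cofinality $\kappa$ all of whose proper initial segments have size $<\kappa$. When $\kappa=\omega$ I would simply take $X$ to have order type $\omega$, so that $X_{p^\kappa}$ becomes the convergent sequence $\omega+1$. When $\kappa>\omega$ I would identify $X$ with $\kappa\times\mathbb{Z}$ ordered lexicographically: every element $(\alpha,n)$ has immediate neighbours $(\alpha,n\pm1)$, so all points of $X$ are isolated; the initial segment below $(\alpha,n)$ has cardinality $\max(|\alpha|,\aleph_0)<\kappa$; and regularity of $\kappa$ guarantees that every subset of size $<\kappa$ has bounded first coordinate, hence is bounded above in $X$. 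Consequently the traces on $X$ of the order-neighbourhoods $(a,p^\kappa]_\preceq$ of $p^\kappa$ are exactly the sets with complement of size $<\kappa$, i.e.\ exactly the members of the filter $p^\kappa$; this shows that $\tau_\preceq$ coincides with the topology of $X_{p^\kappa}$, so $X_{p^\kappa}$ is $p^\kappa$-orderable.

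I expect the main obstacle to lie in the construction for (c) $\Rightarrow$ (a): it is tempting to order $X$ in the order type of the ordinal $\kappa$, but this introduces spurious non-isolated points at limit ordinals, so one must instead use a genuinely discrete order such as $\kappa\times\mathbb{Z}$ and check that regularity of $\kappa$ is precisely what makes small sets bounded. The regularity half of (b) $\Rightarrow$ (c) is conceptually the crux but technically short, resting on the two facts that the approaching number detects the least size of a set converging to $p^\kappa$ and that cofinalities of linear orders are automatically regular.
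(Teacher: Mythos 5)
Your proposal is correct, and for two of the three implications it matches the paper: (a)~$\Rightarrow$~(b) is trivial, $|X|=\kappa$ in (b)~$\Rightarrow$~(c) comes from Proposition~\ref{theorem102:product-selection} exactly as in the paper, and your construction for (c)~$\Rightarrow$~(a) (order type $\omega$ when $\kappa=\omega$, and $\kappa\times\mathbb{Z}$ with the lexicographical order and $p^\kappa$ adjoined as maximum when $\kappa>\omega$, with regularity used to show that sets of size $<\kappa$ are order-bounded) is precisely the paper's. Where you genuinely diverge is the regularity half of (b)~$\Rightarrow$~(c). The paper derives $a(X_{p^\kappa})\le\psi(X_{p^\kappa})$ from the identity $\psi=t$ valid in orderable spaces (citing Engelking) together with the inequality \eqref{a:t}, concludes $\psi(X_{p^\kappa})=\kappa$, and compares this with the computation $\psi(X_{p^\kappa})=\cf(\kappa)$ from Proposition~\ref{a:psi:in:p(kappa)}~(\ref{a:psi:in:p(kappa)-2}). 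You instead argue directly on a compatible linear order: since $p^\kappa$ is the unique non-isolated point it is a limit from at least one side, the segment $L$ below it (after possibly reversing the order) has cofinality exactly $\kappa$ --- at most $\kappa$ because $|L|\le|X|=\kappa$, and at least $\kappa$ because any cofinal subset of $L$ clusters at $p^\kappa$ and hence has size at least $a(X_{p^\kappa})=\kappa$ by Proposition~\ref{a:psi:in:p(kappa)}~(\ref{a:psi:in:p(kappa)-1}) --- and the cofinality of a linearly ordered set without a maximum is a regular cardinal. Both arguments are sound; yours is more elementary and self-contained in that it avoids the $\psi=t$ citation and part~(\ref{a:psi:in:p(kappa)-2}) of Proposition~\ref{a:psi:in:p(kappa)}, at the cost of invoking the (standard) fact that cofinalities of linear orders are regular, whereas the paper's route recycles cardinal-function computations it has already established.
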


\begin{proof}
(a) $\Rightarrow$ (b) is obvious.
To establish
(b) $\Rightarrow$ (c), 
assume that
$X_{p^\kappa}$ is orderable.
Since orderable spaces have a continuous weak selection,
Proposition \ref{theorem102:product-selection}
implies that $|X|=\kappa$.
Since $X_{p^\kappa}$ is orderable,
$\psi (X_{p^\kappa})=t(X_{p^\kappa})$; see \cite[3.12.4 (d)]{engelking:1989}. Furthermore,
$a(X_{p^\kappa})\le t(X_{p^\kappa})$ holds by \eqref{a:t}, which 
gives
$a(X_{p^\kappa})\le \psi (X_{p^\kappa})$. 
Since $\psi(X_{p^\kappa})\le |X|=\kappa$ by Proposition 
\ref{basic:proposition:about:|p|} and 
$a(X_{p^\kappa})=\kappa$ by Proposition \ref{a:psi:in:p(kappa)} (\ref{a:psi:in:p(kappa)-1}), we conclude that $\psi(X_{p^\kappa})=\kappa$.
On the other hard, $\psi(X_{p^\kappa})=\cf(\kappa)$ by 
Proposition \ref{a:psi:in:p(kappa)} 
(\ref{a:psi:in:p(kappa)-2}). This  shows that $\kappa$ is regular.

To show (c) $\Rightarrow$ (a), assume that $|X|=\kappa$ and $\kappa$ is regular.
If $\kappa =|X|= \omega$, then $X_{p^\kappa}$ is homeomorphic to the ordinal space $\omega+1$ by a homeomorphism mapping $p^\kappa$ to $\omega$, an hence $X_{p^\kappa}$ is $p^\kappa$-orderable.
From now on, we assume that $\kappa =|X|> \omega$.

Let $(Y,\leq)$ be the ordered subset $(\kappa \times \mathbb{Z})\cup \{(\kappa, 0)\}$ of the ordered set $(\kappa+1) \times \mathbb{Z}$ with the lexicographical order.
Let $Y$ be equipped with the order topology induced by $\leq$.
Then  $\{(y,\rightarrow)_\leq : y \in  Y\setminus \{ (\kappa,0)\}\}$ is a neighborhood base of the point $(\kappa, 0)$, and 
 every $y\in Y\setminus \{ (\kappa,0)\}$ is an isolated point of $Y$ satisfying $|Y \setminus (y,\rightarrow)|<\kappa$. 
 
 Since $|Y|=\kappa =|X_{p^\kappa}|$, we can take a bijection $f: X_{p^\kappa} \to Y$ such that 
 $f(p^\kappa) =(\kappa, 0)$. 
 Since $|Y \setminus (y,\rightarrow)|<\kappa$ for every $y\in Y\setminus \{ (\kappa,0)\}$, $f$ is continuous. 
 Since $\kappa$ is regular, for every $A\subset X$ with $|A|<\kappa$ there exists $y\in Y\setminus \{ (\kappa,0)\}$ 
 such that $f(a)<y$ for each $a \in A$. This shows that  $f$ is a homeomorphism. Hence $X_{p^\kappa}$ is orderable.
\end{proof}

\begin{remark}
\label{remark:filter-sp-ord}
A subfamily $\mathcal{P}$ of a filter $p$ on a set $X$ is called a  \textit{base for $p$} if for every $P \in p$ there exits $Q \in \mathcal{P}$ such that $Q \subset P$.
In \cite[Theorem 5.1]{garcia-gutev-nogura-sanchis-tomita:2002}, 
it was proved that 
a filter space $X_p$ is $p$-orderable if and only if $p$ has a nested base  $\mathcal{P}$ for $p$ satisfying $\bigcap\mathcal{P} = \emptyset$.
The implication
(c) $\Rightarrow$ (a) of Proposition \ref{proposition:filter-sp-regular-orderable} also follows from this theorem.
Indeed,
enumerate $X=\{ x_\alpha : \alpha < \kappa\}$ and let 
$P_\alpha =\{ x_\beta : \alpha \leq \beta <\kappa\}$ for $\alpha <\kappa$.
Then the family $\mathcal{P}=\{P_\alpha: \alpha<\kappa\}$ is a nested base of $p^\kappa$ such that $\bigcap\mathcal{P}=\emptyset$.
\end{remark}

\section{``Three'' examples of filter spaces}
\label{section:examples}

The following lemma is certainly known. We include its proof only for the reader's convenience.
\begin{lemma}
\label{discrete:lemma}
Every infinite discrete space $D$ is orderable by each of the two linear orders $\le_1$ and $\le_2$ on $D$  such that
$D$ has no $\le_1$-minimal element, while $D$ has a $\le_2$-minimal element.
\end{lemma}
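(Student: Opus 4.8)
The plan is to reduce the lemma to constructing two explicit linear orders with discrete order topology on a set of the same cardinality as $D$, and then to transport them to $D$ along a bijection (which is an order isomorphism, hence a homeomorphism of the induced order topologies). The underlying observation is that, for a linear order $\preceq$ on a set $L$, the order topology $\tau_\preceq$ is discrete precisely when every non-$\preceq$-maximal element has an immediate $\preceq$-successor and every non-$\preceq$-minimal element has an immediate $\preceq$-predecessor; in that situation each singleton $\{x\}$ coincides with an open $\preceq$-interval (one-sided at an extreme point) and is therefore open. This is why a naive well-ordering of $D$ will not do: a well-order of uncountable type has limit points (for instance the ordinal $\omega$ sitting inside $|D|$ is not isolated), and the role of a doubly infinite fibre $\mathbb{Z}$ is exactly to supply immediate successors and predecessors everywhere.

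For the order $\le_1$ without a minimal element, I would fix an arbitrary linear order $\sqsubseteq$ on $D$ (available by the well-ordering theorem) and equip $D\times\mathbb{Z}$ with the lexicographic order $\preceq$, comparing first coordinates by $\sqsubseteq$ and second coordinates by the usual order on $\mathbb{Z}$. Each element $(d,n)$ then has immediate predecessor $(d,n-1)$ and immediate successor $(d,n+1)$, so $\{(d,n)\}=((d,n-1),(d,n+1))_\preceq$ is open and $\tau_\preceq$ is discrete; moreover, since every fibre $\{d\}\times\mathbb{Z}$ lacks a least element, $(D\times\mathbb{Z},\preceq)$ has no $\preceq$-minimal element. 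Because $D$ is infinite we have $|D\times\mathbb{Z}|=|D|$, so any bijection $D\to D\times\mathbb{Z}$ pulls $\preceq$ back to a linear order $\le_1$ on $D$ whose order topology is discrete and which has no minimal element.

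For the order $\le_2$ with a minimal element, I would run the same construction but attach a bottom copy of $\omega$: take the ground set $\omega\sqcup(D\times\mathbb{Z})$, place all of $\omega$ below all of $D\times\mathbb{Z}$, order $\omega$ by its usual order, and order $D\times\mathbb{Z}$ lexicographically as above. The element $0$ of the bottom copy of $\omega$ is now the global minimum and, crucially, it has the immediate successor $1$, so $\{0\}$ is open; every other element still has an immediate predecessor and an immediate successor, so the order topology is again discrete. Since $|\omega\sqcup(D\times\mathbb{Z})|=|D|$, a bijection transports this order to the required $\le_2$ on $D$, which this time possesses a minimal element.

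The only genuine obstacle is topological rather than order-theoretic: one has to verify that the two order topologies really are discrete, that is, that no element is a one- or two-sided limit. The delicate case is the minimum of $\le_2$, whose isolation rests on its having an immediate successor; this is precisely the property that the simplest alternative --- prepending a single least element directly to $D\times\mathbb{Z}$ --- would destroy, since the bottom fibre $\{d\}\times\mathbb{Z}$ has no least element and such a prepended point would be a one-sided limit. Checking these immediate-neighbour conditions at every point is routine once the orders are set up as above.
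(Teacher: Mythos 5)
Your proof is correct and takes essentially the same route as the paper: both arguments build the lexicographic order on a product with $\mathbb{Z}$ and transport it to $D$ along a bijection, and your copy of $\omega$ glued below $D\times\mathbb{Z}$ is order-isomorphic to the half-ray $[(0,0),\rightarrow)_{\leq_l}$ of $\mu\times\mathbb{Z}$ that the paper uses for $\le_2$. Your explicit check that the order topologies are discrete via immediate successors and predecessors is a detail the paper leaves implicit.
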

\begin{proof}
Let $\mu=|D|$, and let $\leq_l$ be the lexicographical order on the product set $\mu \times \mathbb{Z}$. 

Since $D$ is infinite, $|D|=|\mu\times\mathbb{Z}|$ holds, and so 
there exists a bijection $f:D\to \mu\times\mathbb{Z}$.
Now we can define the required order $\le_1$ on $D$ by $x\le_1 y$ if and only if $f(x)\le_l f(y)$ for all $x,y\in D$.

Since the ordered subset $[(0, 0) ,\to)_{\leq_l}$ of $(\mu \times \mathbb{Z}, \leq_l)$ has size $\mu=|D|$, we can fix a bijection
$g:D\to [(0, 0) ,\to)_{\leq_l}$
and define 
the required order $\le_2$ on $D$ by $x\le_2 y$ if and only if $g(x)\le_l g(y)$ for all $x,y\in D$.
\end{proof}

\begin{example}
\label{example:a<psi<card}
Let $\kappa$, $\lambda$ and $\mu$ be infinite cardinals such that $\lambda$ is regular and $\kappa \leq \lambda \leq \mu$.
Then there exits a weakly orderable filter space $Z_p$ such that
$a(Z_p) =\kappa$, $\psi (Z_p)=\lambda$ and $|Z_p|=\mu$.
If, moreover, $\kappa$ is regular, 
then $Z_p$ can be taken to be an orderable space. 
\end{example}

\begin{proof}
Fix pairwise disjoint sets $X$, $Y$, $D$ such that
$|X|=\kappa$, $|Y|=\lambda$ and $|D|=\mu$.
Let $p^\kappa=\{A \subset X: |X\setminus A|<\kappa \}$ and $p^\lambda=\{B \subset Y: |Y\setminus B|<\lambda \}$
be the corresponding filters on $X$ and $Y$ respectively, and
let $p$ be the filter on $Z=X\cup Y\cup D$
generated by $\{ A \cup B : A \in p^\kappa, \, B\in p^\lambda\}$. 
We claim that $Z_p$ is the required filter space.

Since $p \in \overline{X}$ in $Z_p$, we have $a(Z_p)\leq |X|=\kappa$. For every $C \subset Z$ with $|C|<\kappa$, we have 
$X\setminus C \in p^\kappa$ and $Y\setminus C\in p^\lambda$,
which shows that $p \notin \overline{C}$. Therefore,
$a(Z_p)=\kappa$.
Since $\lambda$ is a regular cardinal and $\kappa \leq \lambda$, we have $\psi(Z_p)=
\max\{\psi (X_{p^\kappa}), \psi(Y_{p^\lambda}) \}=\lambda$ by Proposition \ref{a:psi:in:p(kappa)} (\ref{a:psi:in:p(kappa)-2}).
From $\kappa \leq \lambda \leq \mu$, we have $|Z_p|=\mu$.

Let us show that $Z_p$ is weakly orderable.
By Proposition \ref{theorem102:product-selection}, $X_{p^\kappa}$ is weakly $p^\kappa$-orderable by some linear order $\le_X$ such that $p^\kappa$ is $\leq_X$-maximal.
Since
$\lambda$ is regular,
Proposition \ref{proposition:filter-sp-regular-orderable} implies that
$Y_{p^\lambda}$ is $p^\lambda$-orderable by some linear order $\le_Y$  such that $p^\lambda$ is $\leq_Y$-minimal.
If $Y_{p^\lambda}$ has no $\leq_Y$-maximal element,
then we take as $\le_D$ the linear order $\le_1$ (on $D$) defined in Lemma \ref{discrete:lemma}.
If $Y_{p^\lambda}$ has a $\leq_Y$-maximal element, 
then we take as $\le_D$ the linear order $\le_2$ (on $D$) defined in Lemma \ref{discrete:lemma}.
Then $\leq_D$ induces the discrete topology on $D$, and $D$ has a $\leq_D$-minimal element if and only if $Y_{p^\lambda}$ has a $\leq_Y$-maximal element.

Let $\preceq$ be the linear order on $Z\cup\{p\}$ such that 
the restrictions of $\preceq$ to $X$, $Y$ 
and $D$ coincide with $\leq_X$, $\leq_Y$ and $\leq _D$, respectively,
and $x\prec p\prec y \prec d$ for every $x \in  X$, $y \in Y$ and $d \in D$.
Then the order topology induced by $\preceq$ is coarser than the topology of $Z_p$, which shows that $Z_p$ is weakly orderable.

If $\kappa$ is regular, then $X_{p^\kappa}$ is orderable by $\leq_X$; see Proposition \ref{proposition:filter-sp-regular-orderable}.
Therefore,
$Z_p$ is orderable by $\preceq$ because $D$ has a $\leq_D$-minimal element if and only if $Y_{p^\lambda}$ has a $\leq_Y$-maximal element.
\end{proof}

 \begin{example}
\label{example:psi<a<card}
Let $\kappa$, $\lambda$ and $\mu$ be infinite cardinals such that $\kappa$ is regular and $\kappa \leq \cf(\lambda) \leq \lambda \leq \mu \leq \nu$.
Then there exits a weakly $p$-orderable filter space $Z_p$ such that
$\psi(Z_p) =\kappa$, $a (Z_p)=\lambda$, $\| p\| =\mu$ and $|Z_p|=\nu$.
\end{example}
\begin{proof}
Let $X=\kappa \times \mu$
and 
let $D$ be a set disjoint from $X$ such that
$|D|=\nu$.
Let $Z=X\cup D$.
Consider the filter  $p$ on $Z$ generated by $\{((\kappa \setminus \beta) \times \mu ) \setminus A : \beta < \kappa ,\, A\subset X ,\, |A|<\lambda\}$.
Then $Z_p$ is the desired filter space.

For each $\beta <\kappa$, let $P_\beta = (\kappa\setminus\beta) \times \mu$.
Then $\{P_\beta : \beta < \kappa \}$ is a nested subfamily of $p$ such that $\bigcap_{\beta < \kappa} P_\beta = \emptyset$, 
so $Z_p$ is weakly $p$-orderable 
by Theorem \ref{lemma:filter-sp-suff-cond-wo}.
Furthermore,
$\psi (Z_p) \leq \kappa$ by Proposition \ref{proposition:p-ord->psi<a}.

Suppose that $\psi (Z_p) < \kappa$. Then there exists
$\mathcal{U} \subset p$ 
such that
$\bigcap\mathcal{U}=\emptyset$ and $|\mathcal{U}|<\kappa$.
For every $U\in \mathcal{U}$, take $\beta_U < \kappa$ and $A_U \subset X$ so that $|A_U|<\lambda$ and $((\kappa \setminus \beta_U ) \times \mu ) \setminus A_{U} \subset U$.
Since $|\mathcal{U}|<\kappa\le\cf(\lambda)$ and $|A_U|<\lambda$
for every $U\in\mathcal{U}$, 
the set $A=\bigcup_{U \in \mathcal{U}} A_U $
satisfies
$|A|<\lambda$. Since $\lambda\le\mu$, there exists 
$\gamma<\mu$ such that $(\kappa\times\{\gamma\})\cap A=\emptyset$.
Since $|\mathcal{U}|<\kappa=\cf(\kappa)$ and $\beta_U<\kappa$ for every $U\in\mathcal{U}$, we have
$\beta=\sup\{\beta_U : U \in \mathcal{U} \}<\kappa$.
Now $(\beta+1,\gamma)\in ((\kappa \setminus \beta_U ) \times \mu ) \setminus A_{U} \subset U$ for every $U\in\mathcal{U}$,
in contradiction with $\bigcap\mathcal{U}=\emptyset$.
This contradiction shows that $\psi (Z_p)=\kappa$. 

Let $C=\kappa \times \lambda$.
Then $|C| = \lambda$ since $\kappa \leq \lambda$, 
and $p \in \overline{C}$ 
since $|C\cap (\kappa \setminus \beta) \times \mu|=\lambda$ for every $\beta <\kappa$. 
Hence $a(Z_p) \leq |C|=\lambda$.
If $S \subset Z$ with $|S|< \lambda$,
then $X\setminus S \in p$, which implies $p \notin \overline{S}$.
This shows that $a(Z_p) \geq \lambda$, and hence $a(Z_p)=\lambda$.

Clearly, $\| p\| =\mu$.
Finally,
since $\kappa \leq \lambda \leq \mu \leq \nu$, we have $|Z_p|=\nu$.
\end{proof}  
   
\begin{example}
\label{exotic}
Let $\kappa$  be an infinite cardinal such that $\omega_1 \leq \kappa$.
Then there exists a filter space $Z_p$ such that $\psi(Z_p)=\omega_1$, $a(Z_p)=\kappa$ and $Z_p$ does not admit a (separately) continuous weak selection.
\end{example}

\begin{proof}
Let $X$ and $Y$ be sets such that $|X|=\omega_1$ and $|Y|=(2^\kappa)^+$. Define
$Z=X\times Y$, and let
$\pi : Z \to Y$ be the projection.
Let 
$p$ be the filter on $Z$ generated by the family 
$\{P_0 \times P_1 : P_0 \in p^{\omega},\, P_1 \in p^\kappa \}$, where  $p^{\omega}  = \{A \subset X : |X \setminus A|<\omega \}$ and $p^\kappa =\{ B \subset Y : |Y\setminus B| < \kappa \}$.
We show that $Z_p$ is the required filter space.

Note that
$P_x = (X\setminus \{x\}) \times Y\in p$
for every $x \in X$ and $\bigcap_{x \in X} P_x = \emptyset$, 
so
$\psi(Z_p)\leq |X|=\omega_1$.
To show $\psi(Z_p)\geq \omega_1$, let $\mathcal{U} \subset p$ with $\bigcap\mathcal{U} =\emptyset$. For each $U \in \mathcal{U}$, take $A_U \subset X$ and $B_U \subset Y$ such that $|A_U|<\omega$, $|B_U|<\kappa$  and $(X\setminus A_U) \times (Y \setminus B_U) \subset U$.
Since $\bigcap\mathcal{U}=\emptyset$, we have $\bigcup_{U\in \mathcal{U}}A_U=X$ or $\bigcup_{U\in\mathcal{U}}B_U =Y$,
which implies $|\mathcal{U}|\geq \omega_1$ since $|X|=\omega_1\leq \kappa<|Y|$.
Hence $\psi(Z_p)\geq |\mathcal{U}|\ge\omega_1$.

Let us show that $a(Z_p)=\kappa$.
Let $C \subset Z$ with $|C|<\kappa$.
Then the set $P=X  \times (Y \setminus \pi (C))$ is contained in $p$ and $C \cap P =\emptyset$, which shows that 
$p\not\in\overline{C}$. This implies
$a(Z_p) \geq \kappa$.
Next, let $K\subset Y$ with $|K|=\kappa$. Then $(X\times K) \cap P \ne\emptyset$ for each $P \in p$; that is, $p\in\overline{X\times K}$. Since $\omega_1\leq \kappa$, we have $a(Z_p)\leq |X \times K| =\kappa$.

It remains to show that $Z_p$ does not admit a separately continuous weak selection.
Suppose for contradiction that $\sigma$ 
is a separately continuous weak selection on 
$Z_p$.
Define
\begin{equation}
\label{L_x:R_x}
L_x=\pi((\leftarrow,p)_{\preceq_\sigma}\cap(\{x\}\times Y))
\text{  and  } 
R_x=\pi((p,\rightarrow)_{\preceq_\sigma}\cap(\{x\}\times Y))
\end{equation}
for each $x \in X$, and 
\begin{equation}
\label{L:L_x}
L=\{x \in X:|L_x|=|Y|\}
\text{  and  } 
R=\{x \in X:|R_x|=|Y|\}.
\end{equation}
Since $L_x\cup R_x=Y$ for each $x \in X$, either $|L|=\omega_1$ or $|R|=\omega_1$.
Without loss of generality, we may assume $|L|=\omega_1$.

For each $x \in L$ and $y \in L_x$, we have $(x,y) \prec_\sigma p$ by \eqref{L_x:R_x}, and since $\sigma$ is separately continuous,
there exist $A_{x,y} \subset X$ and $B_{x,y} \subset Y$ such that 
$|A_{x,y}|<\omega$, $|B_{x,y}|<\kappa$ and
\begin{equation}
\label{eq:7}
(X\setminus A_{x,y})\times (Y\setminus B_{x,y}) \subset ((x,y), \rightarrow)_{\preceq_\sigma}.
\end{equation}

Fix
$x\in L$. Since $|\{A_{x,y} : y \in L_x \}|\leq |X^{<\omega}|=\omega_1<|Y|=|L_x|$ by \eqref{L:L_x}
and $|Y|$ is regular, there exist $Y_x \subset L_x$ and $A_x\subset X$ such that $|Y_x|=|Y|$ and 
\begin{equation}
\label{eq:*}
A_{x,y} =A_x
\text{ for every }y \in Y_x.
\end{equation} 
(For a set $X$ and a cardinal $\tau$, we use $X^{<\tau}$ to denote the set of all subsets of $X$ of cardinality smaller than $\tau$.)
Since $|Y_x|=|Y|=(2^\kappa)^+$, we have $\lambda^{<\kappa}<|Y_x|$ for each $\lambda <|Y_x|$. Thus, we can apply the $\Delta$-system lemma
\cite[Chapter II, Theorem 1.6]{kunen:1983} to the family $\{B_{x,y} :y \in Y_x \}$ to  find
$S_x \subset Y_x$ and $B_x \subset Y$ such that $|S_x|=|Y_x|=|Y|$ and 
\begin{equation}
\label{eq:**}
B_{x,y} \cap B_{x,y'}=B_x
\text{ for all }
y,y' \in S_x
\text{ with }y\ne y'.
\end{equation}
Finally,
note that $|A_x|<\omega$ and $|B_x|<\kappa$. 

Applying the $\Delta$-system lemma again to the family $\{A_x :x \in L \}$ of finite subsets of $X$, we get $T\subset L$ and $A\subset X$ such that $|T|=\omega_1$ and 
\begin{equation}
\label{eq:***}
A_x \cap A_{x'}=A
\text{ for all }
x, x' \in T
\text{ with }x\ne x'.
\end{equation}
Since $|A|<\omega <\omega_1=|L|$,
we can find $x_0 \in L\setminus A$.
Since
$|\bigcup_{x \in T}B_{x}|\leq \kappa <|Y|=|L_{x_0}|$,
we can fix  $y_0 \in L_{x_0}\setminus \bigcup_{x \in T}B_{x}$.

Since $x_0 \notin A$, it follows from \eqref{eq:***} that $x_0$ is contained in at most one member of the family $\{A_x: x\in T\}$, which implies  
$|\{x \in T : x_0 \notin A_x\}|=\omega_1
>|A_{x_0,y_0}|
$. Hence there exists $x_1 \in T\setminus A_{x_0,y_0}$ such that $x_0 \notin A_{x_1}$.
Since $y_0 \notin \bigcup_{x \in T}B_{x}$ and $x_1 \in T$,
we have $y_0 \notin B_{x_1}$.
It follows from this and \eqref{eq:**} that
$y_0$ is contained in at most one member of the family $\{B_{x_1,y}: y\in S_{x_1} \}$, which implies
$|\{y \in S_{x_1} : y_0 \notin B_{x_1, y}\}|=|S_{x_1}|
=|Y|>|B_{x_0,y_0}|$. Therefore, we can choose 
$y_1 \in S_{x_1} \setminus B_{x_0,y_0}$ such that 
$y_0 \notin B_{x_1,y_1}$.
Since $y_1 \in S_{x_1}\subset Y_{x_1}$, 
we have
$x_0 \notin A_{x_1} = A_{x_1,y_1}$ by \eqref{eq:*}.

Since $x_0 \in L$ and $y_0 \in L_{x_0}$, 
$(x_1,y_1) \in (X\setminus A_{x_0,y_0} )\times (Y\setminus B_{x_0,y_0}) \subset ((x_0,y_0),\rightarrow)_{\preceq_\sigma}$ by \eqref{eq:7}, which implies $(x_0,y_0) \prec_\sigma (x_1,y_1)$. 
On the other hand,
since $x_1 \in T \subset L$ and $y_1 \in Y_{x_1} \subset L_{x_1}$,
$(x_0,y_0) \in (X\setminus A_{x_1,y_1} )\times (Y\setminus B_{x_1,y_1}) \subset ((x_1,y_1),\rightarrow)_{\preceq_\sigma}$
 by \eqref{eq:7}, 
so $(x_1,y_1) \prec_\sigma (x_0,y_0)$, giving a contradiction. 
\end{proof}
  
\section{Weak orderability of products of filter spaces}
\label{section:produts-of-filter-spaces}

In this section, we establish sufficient conditions for the 
weak orderability of products of filter spaces. 

For a selection relation $\preceq_\sigma$ on a space $X$ and $A, B \subset X$,
we write $A \prec_\sigma B$ if $x\prec_\sigma y$ for every $x \in A$ and $y \in B$.

We start with the following lemma generalizing \cite[Lemma 2.4]{garcia-miyazaki-nogura:2013}.

\begin{lemma}
\label{lemma103:product-selection}
Let $\gamma$ be an ordinal and 
 $Z$ a space with a point $p\in Z$ and a family $\{W_\alpha : \alpha < \gamma \}$ of subsets of $Z$ 
satisfying the following conditions:
\begin{enumerate}[{\rm (i)}]
\item \label{103:1} each $W_\alpha$ is  open in $Z$,
\item \label{103:3} $W_\alpha\cap W_\beta=\emptyset$ whenever $\alpha,\beta<\gamma$ and $\alpha\not=\beta$,
\item \label{103:4} $\bigcup_{\alpha<\gamma} W_\alpha=Z\setminus\{p\}$,
\item \label{103:5} $p\not\in \overline{\bigcup_{\beta<\alpha}W_{\beta}}$
for every $\alpha<\gamma$,
\item \label{103:2} each $W_\alpha$ admits a continuous weak selection $\sigma_\alpha$.
\end{enumerate}

Then $Z$ 
has a continuous weak selection.
Moreover, if each $W_\alpha$ is weakly orderable, then
$Z$ is weakly $p$-orderable.  
\end{lemma}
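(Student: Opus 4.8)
The plan is to build a single selection relation (equivalently, a weak selection) on $Z$ out of the local selections $\sigma_\alpha$ by ordering the ``blocks'' $W_\alpha$ according to their index, placing $p$ on top, and verifying that the resulting weak selection is continuous at the only problematic point, namely wherever $p$ is involved. First I would define a total antisymmetric relation $\preceq$ on $Z$ as follows: on each $W_\alpha$ let $\preceq$ agree with $\preceq_{\sigma_\alpha}$; for $x \in W_\alpha$ and $y \in W_\beta$ with $\alpha < \beta$ set $x \prec y$ (so the blocks are linearly ordered by their indices, lower index meaning ``smaller''); and declare $x \prec p$ for every $x \in Z \setminus \{p\}$, making $p$ the $\preceq$-maximum. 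This $\preceq$ is total and antisymmetric, hence it is a selection relation and determines a unique weak selection $\sigma$ on $Z$.

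The heart of the argument is continuity of $\sigma$, and by the general theory of selection topologies it suffices to check that the half-intervals $(\leftarrow, z)_\preceq$ and $(z, \rightarrow)_\preceq$ behave correctly, i.e.\ that $\sigma$ is continuous at each pair $\{a,b\}$. The key steps: (a) For a pair $\{a,b\}$ with both points in the \emph{same} block $W_\alpha$, continuity follows from continuity of $\sigma_\alpha$ together with the fact that $W_\alpha$ is open in $Z$ (condition \eqref{103:1}), so a basic Vietoris neighborhood inside $W_\alpha$ is also one in $Z$. (b) For a pair in \emph{distinct} blocks $W_\alpha, W_\beta$ with $\alpha < \beta$, disjointness \eqref{103:3} lets me separate $a$ and $b$ by the open sets $W_\alpha$ and $W_\beta$; on that Vietoris neighborhood $\sigma$ is constantly equal to the point landing in $W_\alpha$, so it is locally constant and hence continuous there. (c) The genuinely delicate case is a pair $\{a, p\}$ with $a \in W_\alpha$. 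Here $\sigma(\{a,p\}) = a$, and I must produce a Vietoris neighborhood of $\{a,p\}$ on which $\sigma$ selects the non-$p$ coordinate. This is exactly where condition \eqref{103:5} is used: since $p \notin \overline{\bigcup_{\beta \le \alpha} W_\beta}$, I can choose an open neighborhood $U$ of $p$ disjoint from $\bigcup_{\beta \le \alpha} W_\beta$; then for the pair $\{a', p'\}$ with $a' \in W_\alpha$ (forced by shrinking into $W_\alpha$) and $p' \in U$, the point $p'$ lies in some block $W_{\beta'}$ with $\beta' > \alpha$ (or equals $p$), so $a' \prec p'$ and $\sigma$ correctly returns $a'$.

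The main obstacle I anticipate is the continuity verification in case (c) when the second coordinate of a nearby pair is itself an isolated point $p'$ of a higher block rather than $p$: I must ensure the chosen neighborhood of $p$ meets only blocks $W_{\beta}$ with $\beta > \alpha$, which is precisely guaranteed by \eqref{103:5}, and that within $W_\alpha$ the first coordinate's selected value is stable, which uses openness of $W_\alpha$ again. Once continuity is established, the ``moreover'' clause follows by upgrading the construction: if each $\sigma_\alpha$ comes from a linear order $\preceq_\alpha$ (which is exactly what weak orderability of $W_\alpha$ provides, since weak orderability means the topology refines some order topology), then the concatenated relation $\preceq$ above is not merely a selection relation but \emph{transitive} — transitivity within a block is inherited from $\preceq_\alpha$, transitivity across blocks from the linear order on indices, and $p$ being a global maximum causes no violation. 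Hence $\preceq$ is a genuine linear order with $p$ maximal, and the same neighborhood computations show $\tau_\preceq \subset \tau$, so $Z$ is weakly $p$-orderable. I would close by remarking that conditions \eqref{103:1}, \eqref{103:3}, and \eqref{103:4} make the $W_\alpha$ a partition of $Z \setminus \{p\}$ into clopen-in-$(Z\setminus\{p\})$ pieces, which is what lets the local orders glue without interference, while \eqref{103:5} is the sole ingredient controlling the behavior of the order topology at $p$.
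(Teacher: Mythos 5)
Your proposal is correct and follows essentially the same route as the paper's own proof: define the weak selection blockwise with $W_\beta\prec_\sigma W_\alpha$ for $\beta<\alpha$ and $p$ maximal, verify continuity via the three-case pairwise criterion of Gutev--Nogura (same block, distinct blocks, pair containing $p$), using condition (iv) exactly where you use it, and obtain the ``moreover'' clause by noting the glued relation is a linear order when each block is weakly orderable. No substantive differences.
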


\begin{proof}
It follows from (\ref{103:3}) and (\ref{103:4}) that $\{p\}\cup\{  W_\alpha: \alpha <\gamma \}$ is a partition of $Z$.
Thus, there exists a unique weak selection $\sigma$ on $Z$ satisfying the following conditions:
\begin{enumerate}[(a)]
\item 
$\sigma\restriction_{\mathcal{F}_2(W_\alpha)}=\sigma_\alpha$ 
for every $\alpha<\gamma$,
\item $W_\beta\prec_\sigma W_\alpha$ whenever $\beta<\alpha<\gamma$,
\item $W_\alpha\prec_\sigma \{p\}$ for all $\alpha<\gamma$.
\end{enumerate}

To show that $\sigma$ is 
continuous,
for fixed
$x,y\in Z$ with $x\prec_{\sigma} y$,
we need to find an open neighborhood $U_x$ of $x$
and 
an open neighborhood $U_y$ of $y$
such that
$U_x\prec_\sigma U_y$; see \cite[Theorem 2.6]{gutev-nogura:2010}.
We shall consider three cases.
	
{\em Case 1.\/}
{\sl $x,y\in W_{\alpha}$ for some $\alpha<\gamma$.}
Then $x\prec_{\sigma_\alpha} y$ by (a). Since $\sigma_\alpha$ 
is continuous by (\ref{103:2}),
there exist open subsets $U_{x}$ and $U_{y}$ of $W_{\alpha}$ such that $x\in U_{x},y\in U_{y}$ and $U_{x}\prec_{\sigma_{\alpha}}U_{y}$.
Applying (a) once again, we conclude that $U_{x}\prec_{\sigma}U_{y}$.
Since $W_\alpha$ is open in $Z$ by (i), both $U_x$ and $U_y$ are open in $Z$ as well. 	

{\em Case 2\/}. {\sl $x\in W_\beta$ and $y\in W_\alpha$ for 
some $\beta<\alpha<\gamma$\/}.
Then $W_\beta\prec_{\sigma} W_\alpha$ by (b).
Both $W_\alpha$ and $W_\beta$ are open in $Z$ by (\ref{103:1}), and
 we can let $U_x=W_\beta$ and $U_y=W_\alpha$.

{\em Case 3\/}. {\sl $x\in W_\alpha$ for some $\alpha<\gamma$ and $y=p$.\/}
It follows from (\ref{103:1}) and $x\in W_\alpha$ that $U_x=\bigcup_{\beta<\alpha+1} W_\beta$ is an open neighborhood of $x$ in $Z$.
By (\ref{103:5}), $U_y=Z\setminus \overline{\bigcup_{\beta<\alpha+1}W_{\beta}}
=Z\setminus\overline{U_x}$ is an open neighborhood of $y=p$ in $Z$.
Since $U_x=\bigcup_{\beta<\alpha+1} W_\beta$
and 
$U_y\subset\{p\}\cup \bigcup_{\beta\ge \alpha+1} W_\beta$, it follows from (b) and (c) that $U_x\prec_\sigma U_y$.

If each $W_\alpha$ is weakly orderable, then the same argument as above and \cite[Theorem 2.5]{gutev-nogura:2010} show that  $Z$ is weakly orderable by a linear order $\preceq$ and $p$ is $\preceq$-maximal.
\end{proof}

\begin{remark}
The condition (\ref{103:5}) in Lemma \ref{lemma103:product-selection} cannot be dropped.
Indeed, let $X=\{x_\alpha : \alpha < \omega_1 \}$ be a set with $|X|=\omega_1$ and $p^\omega=\{A \subset X : |X\setminus A|<\omega \}$.
For each $\alpha < \omega_1$, let $W_\alpha =\{x_\alpha \}$.
Then the point $p^\omega$ in the filter space $X_{p^\omega}$ and the family $\{W_\alpha: \alpha<\omega_1 \}$ satisfy the conditions (\ref{103:1})--(\ref{103:4}) and (\ref{103:2}) in Lemma \ref{lemma103:product-selection}, yet
$X_{p^\omega}$ does not have a continuous weak selection
by 
Proposition \ref{theorem102:product-selection}.
\end{remark}           

\begin{lemma}
\label{lemma104:product-selection}
Let $\gamma$ be an ordinal and let $X$ and $Y$ be spaces.
Assume that points $p\in X$ and $q \in Y$ and families $\mathcal{U}=\{U_{\alpha}:\alpha<\gamma\}$ of subsets of $X$ and $\mathcal{V}=\{V_{\alpha}:\alpha<\gamma\}$ of subsets of $Y$
satisfy the conditions {\rm (i)--(iv)} in Lemma \ref{lemma103:product-selection} and the following condition:
\begin{enumerate}[{\rm (i)'}]
\setcounter{enumi}{4}
\item both $U_\alpha\times Y$ and $X \times V_\alpha$ admit continuous weak selections for every $\alpha<\gamma$.
\end{enumerate}
Then $X\times Y$ admits a continuous weak selection.
Moreover, if both $U_\alpha\times Y$ and $X \times V_\alpha$
are weakly orderable for every $\alpha<\gamma$, then 
$X\times Y$ is weakly $(p,q)$-orderable.
\end{lemma}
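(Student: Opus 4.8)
The plan is to reduce everything to Lemma \ref{lemma103:product-selection}, applied to $Z=X\times Y$ with the distinguished point $(p,q)$. The essential difficulty is that the obvious open sets carrying continuous weak selections, namely $U_\alpha\times Y$ and $X\times V_\alpha$, overlap badly; so I must cut $X\times Y\setminus\{(p,q)\}$ into \emph{pairwise disjoint} open pieces, each lying inside one of these products so as to inherit a selection. For every $\alpha<\gamma$ I set
\[
G_\alpha=U_\alpha\times\Big(\{q\}\cup\bigcup_{\beta\ge\alpha}V_\beta\Big),\qquad
H_\alpha=\Big(\{p\}\cup\bigcup_{\beta>\alpha}U_\beta\Big)\times V_\alpha .
\]
The second factor of $G_\alpha$ equals $Y\setminus\bigcup_{\beta<\alpha}V_\beta$ and is open: by condition (iv) of Lemma \ref{lemma103:product-selection} for $\mathcal V$, the point $q$ has a neighbourhood $N$ missing $\bigcup_{\beta<\alpha}V_\beta$, and by condition (iii) this forces $N\subseteq\{q\}\cup\bigcup_{\beta\ge\alpha}V_\beta$, while every other point of this set lies in some open $V_\beta$. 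The first factor of $H_\alpha$ equals $X\setminus\bigcup_{\beta<\alpha+1}U_\beta$ and is open by the same argument applied to $\mathcal U$ at the successor index $\alpha+1$. Hence all $G_\alpha$ and $H_\alpha$ are open in $X\times Y$, with $G_\alpha\subseteq U_\alpha\times Y$ and $H_\alpha\subseteq X\times V_\alpha$.

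Next I would verify the partition properties. Using the disjointness (ii) and the covering (iii) of $\mathcal U$ and $\mathcal V$, a direct case analysis on the indices $\mu,\nu$ with $x\in U_\mu$ and $y\in V_\nu$, together with the two spine cases $x=p$ and $y=q$, shows that the family $\{G_\alpha:\alpha<\gamma\}\cup\{H_\alpha:\alpha<\gamma\}$ is pairwise disjoint and covers exactly $X\times Y\setminus\{(p,q)\}$; the decisive point is that \emph{excluding $U_\alpha$ from the first factor of $H_\alpha$} (the shift from $\beta\ge\alpha$ to $\beta>\alpha$) is precisely what separates $G_\alpha$ from $H_\alpha$. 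I then list this family as a single transfinite sequence, level by level, placing $G_\alpha$ and then $H_\alpha$ before every piece of level $\alpha'>\alpha$ (formally, re-index $\gamma\times\{0,1\}$ under the lexicographic order). To obtain condition (iv) of Lemma \ref{lemma103:product-selection} for this sequence I must show that $(p,q)$ lies outside the closure of each initial union, and these are of just two shapes, $\bigcup_{\beta<\alpha}(G_\beta\cup H_\beta)$ and $\bigcup_{\beta<\alpha}(G_\beta\cup H_\beta)\cup G_\alpha$. For the first I choose a product neighbourhood $N_X\times N_Y$ of $(p,q)$ with $N_X\subseteq\{p\}\cup\bigcup_{\beta\ge\alpha}U_\beta$ and $N_Y\subseteq\{q\}\cup\bigcup_{\beta\ge\alpha}V_\beta$, both available from condition (iv); for the second I strengthen $N_X$ to miss $U_\alpha$ as well, i.e.\ $N_X\subseteq\{p\}\cup\bigcup_{\beta>\alpha}U_\beta$, using condition (iv) at $\alpha+1$. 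In each case one checks that the chosen neighbourhood meets none of the listed pieces.

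Condition (v) of Lemma \ref{lemma103:product-selection} then holds because $G_\alpha$ and $H_\alpha$ are subspaces of $U_\alpha\times Y$ and $X\times V_\alpha$, which admit continuous weak selections by hypothesis, and a continuous weak selection restricts to one on every subspace (the Vietoris topology of $\mathcal{F}_2$ of a subspace coincides with the induced one). Thus Lemma \ref{lemma103:product-selection} produces a continuous weak selection on $X\times Y$. For the ``moreover'' statement, if each $U_\alpha\times Y$ and $X\times V_\alpha$ is weakly orderable, then so are the subspaces $G_\alpha$ and $H_\alpha$, since weak orderability passes to subspaces (the order topology induced on a subset is coarser than the subspace topology). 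The ``moreover'' part of Lemma \ref{lemma103:product-selection}, with distinguished point $(p,q)$, then yields a linear order on $X\times Y$ whose order topology is coarser and for which $(p,q)$ is maximal, i.e.\ $X\times Y$ is weakly $(p,q)$-orderable. The main obstacle is exactly the disjointification in the first paragraph: one must split off the vertical piece $G_\alpha$ from the horizontal piece $H_\alpha$ without destroying openness, and it is condition (iv) at the successor index $\alpha+1$ that simultaneously legitimizes the index shift in $H_\alpha$ and provides the separating neighbourhoods in the second-shape closure check.
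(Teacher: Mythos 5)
Your decomposition is exactly the paper's: your $G_\alpha$ and $H_\alpha$ are precisely the two pieces $U_\alpha\times(Y\setminus\bigcup_{\beta<\alpha}V_\beta)$ and $(X\setminus\bigcup_{\beta\le\alpha}U_\beta)\times V_\alpha$ into which the paper splits its set $W_\alpha=G_\alpha\cup H_\alpha$, and the resulting selection (lexicographic on levels, $G_\alpha$ before $H_\alpha$) is the same one the paper builds by applying Lemma~\ref{lemma103:product-selection} to $\{W_\alpha:\alpha<\gamma\}$ after checking that each $W_\alpha$, being the disjoint union of two open pieces each carrying a selection, carries one itself. Your only deviation is bookkeeping --- invoking Lemma~\ref{lemma103:product-selection} once on the doubled sequence indexed by $\gamma\times\{0,1\}$ rather than on $\{W_\alpha\}$ with a two-piece sub-argument --- so this is essentially the same proof and it is correct.
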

 
\begin{proof}
Note that each $U_\alpha$ and $V_\alpha$ are clopen subsets of $X$ and $Y$, respectively.
For every $\alpha < \gamma$, let
$W_\alpha =(U_\alpha \times (Y \setminus \bigcup _{\beta<\alpha}V_\beta)) \cup ((X\setminus \bigcup _{\beta < \alpha } U_\beta) \times V_\alpha).$
Then $\{W_\alpha : \alpha < \gamma \}$ is a pairwise disjoint family of open subsets of $X\times Y$ such that 
$\bigcup_{\alpha <\gamma }W_\alpha =X\times Y\setminus \{(p,q)\}$
and $(p,q) \notin \overline{\bigcup_{\beta<\alpha}W_{\beta}}$
for every $\alpha <\gamma$.

Let $\alpha < \gamma$ be arbitrary.
We
claim that $W_\alpha$
admits a continuous weak selection.
To show this, note that
$O_0=U_\alpha \times (Y \setminus \bigcup _{\beta<\alpha}V_\beta) $
and $O_1=(X\setminus \bigcup _{\beta \leq \alpha } U_\beta) \times V_\alpha$
are disjoint open subsets of $W_\alpha$ 
such that $W_\alpha =O_0 \cup O_1$.
Let 
$\sigma_0$ and $\sigma_1$ be continuous weak selections of $U_\alpha \times Y$ and $X\times V_\alpha$, respectively.
Then the weak selection $\sigma$ on $W_\alpha$
defined by
$O_0\preceq_\sigma O_1$ and 
$\sigma\restriction_{\mathcal{F}_2(O_i)}=\sigma_i$ 
for $i\in \{0,1\}$
is continuous.  
Thus, 
$X\times Y$ admits a continuous weak selection
by Lemma \ref{lemma103:product-selection}.
Furthermore,
if both $\sigma_0$ and $\sigma_1$ above are induced by linear orders,
then $W_\alpha$ is weakly orderable by the linear order $\preceq_\sigma$.
Hence $X\times Y$ is weakly $(p,q)$-orderable by Lemma \ref{lemma103:product-selection}. 
\end{proof}
 
\begin{lemma}
\label{lemma:filter-sp-suff-cond-prod-wo}
For $Z\in\{X,Y\}$, let $p_Z$ be a filter on a set $Z$ with a 
nested subfamily  $\{P^Z_\alpha : \alpha < \gamma \} \subset p_Z$ satisfying $P^Z_\alpha \subsetneq \bigcap_{\beta<\alpha} P^Z_\beta$ for every  $\alpha < \gamma$ and 
	$\bigcap_{\alpha < \gamma} P^Z_\alpha = \emptyset$.
	Then $X_{p_X}\times Y_{p_Y}$ is  weakly $(p_X,p_Y)$-orderable.
\end{lemma}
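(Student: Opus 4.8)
The plan is to apply Lemma \ref{lemma104:product-selection} to the filter spaces $X_{p_X}$ and $Y_{p_Y}$ (in the roles of $X$ and $Y$) with distinguished points $p_X$ and $p_Y$. The entire task then reduces to manufacturing, out of the given nested families, two families $\mathcal{U}=\{U_\alpha:\alpha<\gamma\}$ and $\mathcal{V}=\{V_\alpha:\alpha<\gamma\}$ of subsets of $X$ and $Y$ satisfying conditions (i)--(iv) of Lemma \ref{lemma103:product-selection} together with the extra hypothesis of Lemma \ref{lemma104:product-selection}. For $Z\in\{X,Y\}$ I would write $Q^Z_\alpha=\bigcap_{\beta<\alpha}P^Z_\beta$, so that $Q^Z_0=Z$ and, using the strict inclusions in the hypothesis, $Q^Z_{\alpha+1}=Q^Z_\alpha\cap P^Z_\alpha=P^Z_\alpha$; thus $\{Q^Z_\alpha:\alpha<\gamma\}$ is a decreasing transfinite sequence of filter-type sets. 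I then set $U_\alpha=Q^X_\alpha\setminus P^X_\alpha$ and $V_\alpha=Q^Y_\alpha\setminus P^Y_\alpha$, i.e.\ the successive ``shells'' of these sequences.

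Verifying (i)--(iv) for $\mathcal{U}$ in $X_{p_X}$ (the argument for $\mathcal{V}$ is identical) is largely bookkeeping. Each $U_\alpha$ is contained in the discrete part $X$, hence open, which gives (i). For (ii) and (iii), for each $x\in X$ let $\eta(x)=\min\{\alpha<\gamma:x\notin P^X_\alpha\}$, which exists precisely because $\bigcap_{\alpha<\gamma}P^X_\alpha=\emptyset$; since $\{P^X_\alpha\}$ is decreasing one checks that $x\in U_\alpha$ if and only if $\alpha=\eta(x)$, so the $U_\alpha$ partition $X=X_{p_X}\setminus\{p_X\}$. For (iv) one computes $\bigcup_{\beta<\alpha}U_\beta=\{x:\eta(x)<\alpha\}=X\setminus Q^X_\alpha$. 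The step requiring care — and what I regard as the crux of the argument — is that $p_X\notin\overline{X\setminus Q^X_\alpha}$, which holds exactly when $Q^X_\alpha$ is a neighbourhood of $p_X$, that is, when $Q^X_\alpha\in p_X$. This is where the strict inclusion $P^X_\alpha\subsetneq Q^X_\alpha$ is decisive: since $P^X_\alpha\in p_X$ and $P^X_\alpha\subseteq Q^X_\alpha$, upward closure of the filter forces $Q^X_\alpha\in p_X$ for \emph{every} $\alpha<\gamma$, and in particular at limit stages, where $Q^X_\alpha=\bigcap_{\beta<\alpha}P^X_\beta$ would otherwise have no reason to lie in $p_X$ (filters being closed only under finite intersections).

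It remains to establish the additional hypothesis of Lemma \ref{lemma104:product-selection}, namely that $U_\alpha\times Y_{p_Y}$ and $X_{p_X}\times V_\alpha$ are weakly orderable. Since $U_\alpha$ sits inside the discrete part of $X_{p_X}$, the space $U_\alpha\times Y_{p_Y}$ is homeomorphic to the topological sum $\bigsqcup_{x\in U_\alpha}Y_{p_Y}$ of copies of $Y_{p_Y}$; and $Y_{p_Y}$ is itself weakly orderable by Theorem \ref{lemma:filter-sp-suff-cond-wo}, because $\{P^Y_\alpha:\alpha<\gamma\}$ is a nested subfamily of $p_Y$ with empty intersection. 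A topological sum of weakly orderable spaces is again weakly orderable — linearly order the index set arbitrarily, concatenate the given orders, and observe that each summand is clopen, so the resulting ray topology stays coarser than the sum topology — which yields weak orderability of $U_\alpha\times Y_{p_Y}$, and symmetrically of $X_{p_X}\times V_\alpha$. With (i)--(iv) and the extra hypothesis verified, the ``Moreover'' clause of Lemma \ref{lemma104:product-selection} gives that $X_{p_X}\times Y_{p_Y}$ is weakly $(p_X,p_Y)$-orderable, as required.
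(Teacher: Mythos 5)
Your proof is correct and follows essentially the same route as the paper: the same ``shells'' $U_\alpha=(\bigcap_{\beta<\alpha}P^X_\beta)\setminus P^X_\alpha$ and $V_\alpha=(\bigcap_{\beta<\alpha}P^Y_\beta)\setminus P^Y_\alpha$, the same verification of conditions (i)--(iv) of Lemma \ref{lemma103:product-selection} (the paper checks the closure condition via $\bigcup_{\beta<\alpha}U_\beta\cap P^X_\alpha=\emptyset$ with $P^X_\alpha\in p_X$, which is the same observation as your $Q^X_\alpha\in p_X$), and the same clopen-partition argument for the weak orderability of $U_\alpha\times Y_{p_Y}$ before invoking Lemma \ref{lemma104:product-selection}. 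The only cosmetic difference is that you spell out the role of Theorem \ref{lemma:filter-sp-suff-cond-wo} and of the inclusion $P^X_\alpha\subset\bigcap_{\beta<\alpha}P^X_\beta$ at limit stages, which the paper leaves implicit.
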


\begin{proof} 
For $Z \in \{X,Y\}$ and $\alpha<\gamma$,
let $U_\alpha^Z = (\bigcap_{\beta <\alpha} P^Z_\beta) \setminus P^Z_\alpha $, where $U_0^Z=Z \setminus P^Z_0  $.
For $Z \in \{X,Y\}$, we claim that
the point $p_Z \in Z_{p_Z}$ and 
the family $\{U_\alpha^Z :\alpha < \gamma  \}$, 
satisfy the conditions 
(\ref{103:1})--(\ref{103:5}) in Lemma \ref{lemma103:product-selection} and (v)' in Lemma \ref{lemma104:product-selection}.
Indeed, (\ref{103:1}) and (\ref{103:3}) are clear.
(\ref{103:4}) follows from $\bigcap_{\alpha < \kappa} P_\alpha ^Z= \emptyset$.
For (\ref{103:5}), let $\alpha<\gamma$. 
Since $\bigcup_{\beta < \alpha}U_\beta^Z \cap P_\alpha^Z  = \emptyset $ and  $ P_\alpha ^Z\in p_Z$, we have $p_Z \notin \overline{\bigcup_{\beta <\alpha}U_\beta^Z}$.
It remains to show (\ref{103:2})'. Since $Y_{p_Y}$  is weakly orderable and 
$\{ \{x\} \times Y_{p_Y} : x \in U_\alpha^X \}$ is a partition into clopen subsets of $U^X_\alpha \times Y_{p_Y}$, 
the product
$U^X_\alpha \times Y_{p_Y}$ is weakly orderable for $\alpha<\gamma$.  
Similarly, $X_{p_X} \times U^Y_\alpha $ is weakly orderable for $\alpha< \gamma$. 
Thus, the conclusion follows from Lemma \ref{lemma104:product-selection}.
\end{proof}

The following fact follows from \cite[Proposition 5.4]{garcia-gutev-nogura-sanchis-tomita:2002}. We give a direct proof here for the sake of completeness.

\begin{fact}\label{fact:partition-wo}
For every nested family $\mathcal{P}$ of subsets of a set $X$ with 
$\bigcap\mathcal{P} =\emptyset$, there exist an 
ordinal $\gamma<|\mathcal{P}|^+$ and $\{P_\alpha :\alpha < \gamma  \} \subset \mathcal{P}$ such that 
$P_\alpha \subsetneq \bigcap_{\beta<\alpha} P_\beta$ 
for every  $\alpha < \gamma$ and $\bigcap_{\alpha < \gamma}P_\alpha = \emptyset$.  
\end{fact} 

\begin{proof}
The family is constructed by transfinite induction.
Since $\bigcap\mathcal{P}=\emptyset$, we can take
$P_0 \in \mathcal{P}$.
Assume $P_\alpha \in \mathcal{P}$ has been taken for each $\alpha< \delta$ in such a way that $P_\alpha \subsetneq \bigcap_{\beta<\alpha} P_\beta$ for every  $\alpha < \delta$.
If $\bigcap_{\alpha<\delta}P_\alpha = \emptyset$, then $\{P_\alpha :\alpha < \delta  \}$ is as required (if one takes $\gamma=\delta$).
Suppose that
$\bigcap_{\alpha<\delta}P_\alpha \ne \emptyset$.
Then we can use Lemma \ref{element:in:the:intersection}
to select $P_\delta\in \mathcal{P}$ such that
$P_\delta\subsetneq \bigcap_{\alpha<\delta}P_\alpha$.
According to this construction and $\bigcap \mathcal{P} = \emptyset$, we get $\bigcap_{\alpha<\gamma}P_\alpha = \emptyset$ for some $\gamma < |\mathcal{P}|^+$. 
\end{proof}

\begin{corollary}
\label{theorem:filter-sp-suff-cond-prod-wo}
If $X_p$ a weakly $p$-orderable filter space,
then $X_p \times X_p$ is weakly $(p,p)$-orderable.
\end{corollary}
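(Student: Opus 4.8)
The plan is to reduce Corollary \ref{theorem:filter-sp-suff-cond-prod-wo} to Lemma \ref{lemma:filter-sp-suff-cond-prod-wo} by supplying the nested, strictly-decreasing towers required as hypotheses there. Since $X_p$ is weakly $p$-orderable, Theorem \ref{lemma:filter-sp-suff-cond-wo} guarantees a nested subfamily $\mathcal{P}\subset p$ with $\bigcap\mathcal{P}=\emptyset$. This family need not be strictly decreasing nor indexed by an ordinal, so the first step is to invoke Fact \ref{fact:partition-wo}: it produces an ordinal $\gamma<|\mathcal{P}|^+$ and a subfamily $\{P_\alpha:\alpha<\gamma\}\subset\mathcal{P}$ satisfying $P_\alpha\subsetneq\bigcap_{\beta<\alpha}P_\beta$ for every $\alpha<\gamma$ and $\bigcap_{\alpha<\gamma}P_\alpha=\emptyset$. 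This is exactly the shape of tower demanded in the hypothesis of Lemma \ref{lemma:filter-sp-suff-cond-prod-wo}.

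Next I would apply Lemma \ref{lemma:filter-sp-suff-cond-prod-wo} with the \emph{same} space on both coordinates. Concretely, set $X=Y$ (as sets), $p_X=p_Y=p$, and take $P^X_\alpha=P^Y_\alpha=P_\alpha$ for each $\alpha<\gamma$. The common family $\{P_\alpha:\alpha<\gamma\}$ satisfies the strict-decrease and empty-intersection conditions just established, so all hypotheses of Lemma \ref{lemma:filter-sp-suff-cond-prod-wo} are met. The lemma then yields directly that $X_{p}\times X_{p}$ is weakly $(p,p)$-orderable, which is the desired conclusion.

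The argument is essentially a bookkeeping reduction, so there is no genuine obstacle; the only point requiring a moment's care is recognizing that Lemma \ref{lemma:filter-sp-suff-cond-prod-wo} is stated for two \emph{a priori} distinct filter spaces $X_{p_X}$ and $Y_{p_Y}$, and verifying that nothing in its hypotheses or conclusion is disturbed by collapsing both factors to the single space $X_p$. In particular, the clause in that lemma's proof invoking weak orderability of the factor $Y_{p_Y}$ (used to weakly order $U^X_\alpha\times Y_{p_Y}$) is available here because $Y_{p_Y}=X_p$ is weakly $p$-orderable, hence weakly orderable by the implications in \eqref{four:properties:for:filter:spaces}. With that observation the diagonal specialization is legitimate and the corollary follows at once.

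\begin{proof}
Since $X_p$ is weakly $p$-orderable, Theorem \ref{lemma:filter-sp-suff-cond-wo} provides a nested subfamily $\mathcal{P}\subset p$ with $\bigcap\mathcal{P}=\emptyset$. By Fact \ref{fact:partition-wo}, there exist an ordinal $\gamma<|\mathcal{P}|^+$ and a subfamily $\{P_\alpha:\alpha<\gamma\}\subset\mathcal{P}$ such that $P_\alpha\subsetneq\bigcap_{\beta<\alpha}P_\beta$ for every $\alpha<\gamma$ and $\bigcap_{\alpha<\gamma}P_\alpha=\emptyset$. Applying Lemma \ref{lemma:filter-sp-suff-cond-prod-wo} with $X=Y$, $p_X=p_Y=p$ and $P^X_\alpha=P^Y_\alpha=P_\alpha$ for each $\alpha<\gamma$, we conclude that $X_p\times X_p$ is weakly $(p,p)$-orderable.
\end{proof}
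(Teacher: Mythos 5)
Your proposal is correct and follows exactly the paper's own route: the paper's proof is the one-line citation of Theorem \ref{lemma:filter-sp-suff-cond-wo}, Fact \ref{fact:partition-wo} and Lemma \ref{lemma:filter-sp-suff-cond-prod-wo}, which is precisely the reduction you carry out. You have merely made the diagonal specialization explicit, which is a harmless (and arguably helpful) elaboration.
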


\begin{proof}
It follows from  Theorem \ref{lemma:filter-sp-suff-cond-wo}, Fact \ref{fact:partition-wo} and Lemma \ref{lemma:filter-sp-suff-cond-prod-wo}.
\end{proof}

\begin{corollary}
\label{theorem4:product-selection}
If $X_{p}$ and $Y_{q}$ are filter spaces such that $a(X_{p})=\| p\| =a(Y_{q})=\| q\| $,
then $X_{p}\times Y_{q}$ is weakly $(p,q)$-orderable.
\end{corollary}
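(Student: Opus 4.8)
The plan is to reduce Corollary \ref{theorem4:product-selection} to the already-established Lemma \ref{lemma:filter-sp-suff-cond-prod-wo} by producing, for each of the two filters, a nested subfamily of the required strictly decreasing type with empty intersection. The hypothesis is precisely $a(X_p)=\|p\|$ and $a(Y_q)=\|q\|$, which is exactly the hypothesis of Proposition \ref{corollary:dispersion-character}. So the first step is to apply Proposition \ref{corollary:dispersion-character} to $X_p$ with $\kappa=a(X_p)=\|p\|$, obtaining a nested family $\{P^X_\alpha:\alpha<\kappa\}\subset p$ with $P^X_\alpha\subsetneq\bigcap_{\beta<\alpha}P^X_\beta$ for every $\alpha<\kappa$ and $\bigcap_{\alpha<\kappa}P^X_\alpha=\emptyset$.

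The next step is to do the same for $Y_q$. Here I must be careful about the indexing: the hypothesis only gives $a(X_p)=\|p\|$ and $a(Y_q)=\|q\|$ separately, and a priori these two cardinals could differ. However, the hypothesis chains all four equalities together as $a(X_p)=\|p\|=a(Y_q)=\|q\|$, so in fact $\|p\|=\|q\|=\kappa$ for a single cardinal $\kappa$, and both strictly decreasing nested families can be taken to have the same length $\gamma=\kappa$. This common length is what Lemma \ref{lemma:filter-sp-suff-cond-prod-wo} requires: its ordinal $\gamma$ indexes both $\{P^X_\alpha:\alpha<\gamma\}$ and $\{P^Y_\alpha:\alpha<\gamma\}$ simultaneously. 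Thus applying Proposition \ref{corollary:dispersion-character} to $Y_q$ gives $\{P^Y_\alpha:\alpha<\kappa\}\subset q$ with the analogous strict-decrease and empty-intersection properties.

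With both families in hand and sharing the common index set $\kappa$, the final step is to invoke Lemma \ref{lemma:filter-sp-suff-cond-prod-wo} directly: taking $\gamma=\kappa$, $p_X=p$, $p_Y=q$, and the two families just produced, the lemma yields that $X_p\times Y_q$ is weakly $(p,q)$-orderable, which is exactly the conclusion. The whole argument is therefore a short composition of two applications of Proposition \ref{corollary:dispersion-character} feeding into one application of Lemma \ref{lemma:filter-sp-suff-cond-prod-wo}.

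The only genuine subtlety, and the step I would watch most carefully, is the matching of the index sets: Lemma \ref{lemma:filter-sp-suff-cond-prod-wo} is stated with a single ordinal $\gamma$ governing both filters, so it is essential to observe that the four-fold equality in the hypothesis forces $\|p\|=\|q\|$, making $\kappa$ a common value. If one only knew $a(X_p)=\|p\|$ and $a(Y_q)=\|q\|$ with possibly unequal values, the lemma as stated would not apply verbatim and one would need either a variant of the lemma allowing different lengths or an argument padding the shorter family. Since the stated hypothesis rules this out, no such complication arises, and the proof is essentially a one-line deduction once the three cited results are assembled.
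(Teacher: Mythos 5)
Your proposal is correct and follows exactly the paper's own proof: both apply Proposition \ref{corollary:dispersion-character} to each factor with the common cardinal $\kappa=a(X_p)=\|p\|=a(Y_q)=\|q\|$ and then invoke Lemma \ref{lemma:filter-sp-suff-cond-prod-wo} with $\gamma=\kappa$. Your observation about the need for a common index length is the right point to watch, and the four-fold equality in the hypothesis handles it just as you say.
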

\begin{proof}
Let $X_{p}$ and $Y_{q}$ be filter spaces such that $a(X_{p})=\| p\| =a(Y_{q})=\| q\|=\kappa$.
By Proposition \ref{corollary:dispersion-character},
for  $Z \in \{ X,Y\}$,
there exists a nested subfamily  $\{P^Z_\alpha : \alpha < \kappa \} \subset p_Z$ satisfying 
$P^Z_\alpha \subsetneq \bigcap_{\beta<\alpha} P^Z_\beta$ for every  $\alpha < \kappa$ and 
$\bigcap_{\alpha < \kappa} P^Z_\alpha = \emptyset$.
Now the conclusion follows from Lemma
\ref{lemma:filter-sp-suff-cond-prod-wo}.
\end{proof}

\begin{corollary}
\label{corollary4:product-selection}
If $p$ and $q$ are filters on a set $X$ such that $a(X_{p})=a(X_{q})=|X|$,
then $X_{p}\times X_{q}$ is weakly $(p,q)$-orderable.
\end{corollary}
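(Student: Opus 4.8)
The plan is to reduce Corollary~\ref{corollary4:product-selection} to Corollary~\ref{theorem4:product-selection} by verifying that the hypothesis $a(X_p)=a(X_q)=|X|$ forces the stronger four-way equality $a(X_p)=\|p\|=a(X_q)=\|q\|$ required there. First I would recall from Proposition~\ref{basic:proposition:about:|p|} the general inequalities $\max\{a(X_p),\psi(X_p)\}\le\|p\|\le|X_p|$ and, crucially, $\|p\|\le|X|$ (as a filter on $X$ consists of subsets of $X$, each of cardinality at most $|X|$). Thus $a(X_p)\le\|p\|\le|X|$, and the same chain holds with $q$ in place of $p$.

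Now the assumption $a(X_p)=|X|$ pins down the whole chain. Since $|X|=a(X_p)\le\|p\|\le|X|$, every inequality collapses to an equality, giving $a(X_p)=\|p\|=|X|$. The identical argument applied to $q$ yields $a(X_q)=\|q\|=|X|$. Putting these together, all four cardinals $a(X_p)$, $\|p\|$, $a(X_q)$, $\|q\|$ equal $|X|$, so in particular $a(X_p)=\|p\|=a(X_q)=\|q\|$.

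With this equality in hand, Corollary~\ref{theorem4:product-selection} applies directly to the filter spaces $X_p$ and $X_q$ (taking $Y=X$ and $q$ as the second filter), and its conclusion is precisely that $X_p\times X_q$ is weakly $(p,q)$-orderable, which is what we want. I do not anticipate any genuine obstacle here: the entire content is the observation that $a(X_p)=|X|$ saturates the $\|p\|\le|X|$ bound, so no separate verification of the existence of the nested families is needed---that work is already encapsulated in Corollary~\ref{theorem4:product-selection} via Proposition~\ref{corollary:dispersion-character} and Lemma~\ref{lemma:filter-sp-suff-cond-prod-wo}. The only point demanding a line of care is the clean justification that $\|p\|\le|X|$, which is immediate from the definition $\|p\|=\min\{|P|:P\in p\}$ together with $P\subset X$.
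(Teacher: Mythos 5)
Your proposal is correct and follows essentially the same route as the paper: both squeeze $a(X_p)\le\|p\|\le|X|$ (via Proposition \ref{basic:proposition:about:|p|}) against the hypothesis $a(X_p)=|X|$ to obtain $a(X_p)=\|p\|=|X|=a(X_q)=\|q\|$, and then invoke Corollary \ref{theorem4:product-selection}. No issues.
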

\begin{proof}
From the assumption of our corollary and Proposition \ref{basic:proposition:about:|p|}, we have 
 $a(X_{p})=\| p\| =|X|=a(Y_{q})=\| q\|$, and the conclusion follows from Corollary \ref{theorem4:product-selection}.
\end{proof}

From
Proposition \ref{proposition:ultrafilter-dispersion-ch} and
Corollary \ref{theorem4:product-selection}, we obtain
the following

\begin{corollary}
\label{proposition5:product-selection}
If $p$ and $q$ are  ultrafilters on a set $X$ with $\|p\| =\|q\|$,
then $X_{p}\times X_{q}$ is weakly $(p,q)$-orderable.
\end{corollary}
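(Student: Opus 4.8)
The plan is to reduce Corollary~\ref{proposition5:product-selection} to Corollary~\ref{theorem4:product-selection}, which already handles products of filter spaces whose approaching numbers and minimal member-sizes all coincide. The key observation is that for ultrafilters, the approaching number is completely determined by $\|\cdot\|$, so the hypothesis $\|p\|=\|q\|$ forces all four cardinals in the hypothesis of Corollary~\ref{theorem4:product-selection} to agree.

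First I would invoke Proposition~\ref{proposition:ultrafilter-dispersion-ch}, which asserts that $a(X_r)=\|r\|$ for every ultrafilter $r$ on $X$. Applying this with $r=p$ gives $a(X_p)=\|p\|$, and applying it with $r=q$ gives $a(X_q)=\|q\|$. Combining these two equalities with the standing assumption $\|p\|=\|q\|$, I obtain the chain
\begin{equation*}
a(X_p)=\|p\|=\|q\|=a(X_q).
\end{equation*}
In particular all four quantities are equal to a single cardinal, which is exactly the hypothesis $a(X_p)=\|p\|=a(X_q)=\|q\|$ required by Corollary~\ref{theorem4:product-selection} (taking $Y=X$ and $q$ in the role of the second filter).

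With that hypothesis verified, Corollary~\ref{theorem4:product-selection} applies directly and yields that $X_p\times X_q$ is weakly $(p,q)$-orderable, which is precisely the conclusion. So the entire proof is a two-line deduction: quote Proposition~\ref{proposition:ultrafilter-dispersion-ch} twice to translate the ultrafilter hypothesis $\|p\|=\|q\|$ into the equal-approaching-number condition, then cite Corollary~\ref{theorem4:product-selection}.

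I do not anticipate any genuine obstacle here, since all the substantive work has already been done upstream: the combinatorial construction of the nested descending family lives in Proposition~\ref{corollary:dispersion-character}, and the topological argument that such families produce weak $(p,q)$-orderability lives in Lemma~\ref{lemma:filter-sp-suff-cond-prod-wo} (via Corollary~\ref{theorem4:product-selection}). The only thing to be careful about is the bookkeeping of which cardinal plays which role, namely that $\|p\|=\|q\|$ together with the two instances of Proposition~\ref{proposition:ultrafilter-dispersion-ch} collapse the four cardinals $a(X_p),\|p\|,a(X_q),\|q\|$ into one; this is the single place where the ultrafilter assumption is essential, since for a general filter the inequality $a(X_r)\le\|r\|$ from Proposition~\ref{basic:proposition:about:|p|} need not be an equality.
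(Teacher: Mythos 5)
Your proof is correct and follows exactly the paper's own route: the paper derives this corollary directly from Proposition~\ref{proposition:ultrafilter-dispersion-ch} (which gives $a(X_p)=\|p\|$ and $a(X_q)=\|q\|$) combined with Corollary~\ref{theorem4:product-selection}. Your bookkeeping of the four cardinals is precisely the intended deduction.
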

   
Our next theorem shows that
the assumption
$a(X_p)=|X_p|$ in Corollary \ref{corollary4:product-selection}
is necessary in a certain sense.
          
\begin{theorem}
\label{theorem5:product-selection}
Let $X$ be a non-discrete space with $a(X)<|X|$.
Then:
\begin{enumerate}[{\rm (i)}]
\item
There exists a weakly orderable filter space $Y_q$  such that $a(Y_q) =a(X)$, $|Y_q|=|X|$ and 
$X\times Y_q$ does not admit a separately continuous weak selection.
\item 
If $|X|$ is regular, then there exists an orderable filter space
$Y_q$ such that $a(Y_q)=|Y_q|=|X|$ 
and $X\times Y_q$ does not admit a separately continuous weak selection.
\end{enumerate}
\end{theorem}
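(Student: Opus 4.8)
The plan is to reduce both parts to the pseudo-character obstruction of Theorem~\ref{corollary101:product-selection}, exploiting the coordinate-swapping homeomorphism $X\times Y_q\cong Y_q\times X$. Since $X$ is non-discrete, Theorem~\ref{corollary101:product-selection} applied to the product $Y_q\times X$ shows that if $Y_q\times X$ admits a separately continuous weak selection, then $\psi(Y_q)\le a(X)$; as the swap map is a homeomorphism, the same conclusion holds for $X\times Y_q$. Hence it suffices to produce, in each case, a filter space $Y_q$ with the prescribed values of $a(Y_q)$ and $|Y_q|$ together with the strict inequality $\psi(Y_q)>a(X)$, which then rules out any separately continuous weak selection on $X\times Y_q$.

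A preliminary observation is that $a(X)$ is an infinite cardinal: if $p$ is a non-isolated point of the Hausdorff (hence $T_1$) space $X$ and $p\in\overline{A}$ with $A\subset X\setminus\{p\}$, then $A$ cannot be finite, so $a(p,X)\ge\omega$ and therefore $a(X)\ge\omega$. In particular $a(X)^+$ is a regular successor cardinal, and the hypothesis $a(X)<|X|$ yields $a(X)^+\le|X|$.

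For part (i) I would feed the triple $\kappa=a(X)$, $\lambda=a(X)^+$, $\mu=|X|$ into Example~\ref{example:a<psi<card}: here $\lambda=a(X)^+$ is regular and $\kappa\le\lambda\le\mu$ holds since $a(X)<a(X)^+\le|X|$. This produces a weakly orderable filter space $Y_q$ with $a(Y_q)=a(X)$, $\psi(Y_q)=a(X)^+$ and $|Y_q|=|X|$. Since $\psi(Y_q)=a(X)^+>a(X)$, the reduction above shows that $X\times Y_q$ admits no separately continuous weak selection, as required. For part (ii), with $|X|$ regular, I would instead take a set $Y$ with $|Y|=|X|$ and let $q=p^{|X|}(Y)$ be the uniform filter; since $|Y|=|X|$ is regular, Proposition~\ref{proposition:filter-sp-regular-orderable} makes $Y_q$ orderable, while Proposition~\ref{a:psi:in:p(kappa)} gives $a(Y_q)=|X|=|Y_q|$ and $\psi(Y_q)=\cf(|X|)=|X|$. (Alternatively one may apply Example~\ref{example:a<psi<card} with $\kappa=\lambda=\mu=|X|$.) Again $\psi(Y_q)=|X|>a(X)$, so the same reduction finishes the argument.

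The only genuine subtlety, and hence the step I would check most carefully, is the application of Theorem~\ref{corollary101:product-selection} in the ``wrong'' coordinate: one must use that it is the non-discrete factor whose approaching number appears on the right-hand side, so that swapping the factors converts the bound on the pseudo-character of the first factor into the inequality $\psi(Y_q)\le a(X)$ that we wish to contradict. Everything else is bookkeeping, namely verifying the cardinal inequalities needed to invoke Example~\ref{example:a<psi<card} (the regularity of $a(X)^+$ together with $a(X)^+\le|X|$ for part (i), and the regularity of $|X|$ for part (ii)).
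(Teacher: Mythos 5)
Your proposal is correct and follows essentially the same route as the paper: both parts are reduced to Theorem~\ref{corollary101:product-selection} (applied to $Y_q\times X$ with the non-discrete factor $X$ in the second coordinate) by producing $Y_q$ with $\psi(Y_q)>a(X)$ via Example~\ref{example:a<psi<card}, which is exactly the paper's argument, with your treatment merely making the coordinate swap and the verification $a(X)\ge\omega$ explicit. Your direct use of the uniform filter $p^{|X|}(Y)$ in part (ii) is only a cosmetic variant of the paper's choice (Example~\ref{example:a<psi<card} with $\kappa=\lambda=\mu=|X|$), which you also note.
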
 
\begin{proof}
There exists a weakly orderable filter space $Y_q$ satisfying 
$a(Y_q)=a(X)$, $\psi(Y_q)=a(X)^+$ and $|Y_q|=|X|$; see Example \ref{example:a<psi<card}.
By Theorem \ref{corollary101:product-selection} and $\psi(Y_q) >a(X)$, $X\times Y_q$ does not admit a separately continuous weak selection, which proves (i).
For (ii), it suffices to take a filter space $Y_q$ such that 
$a(Y_q)=\psi(Y_q)=|Y_q|=|X|$; see Example \ref{example:a<psi<card}.
  \end{proof}   

On the other hand, 
our next remark demonstrates that
the inequality $a(Z_p)<|Z_p|$ is not an obstacle for the square $Z_p\times Z_p$ of a filter space $Z_p$ to have a continuous weak selection.

\begin{remark}
For a filter space $Z_p$ from Example \ref{example:psi<a<card},
$Z_p \times Z_p$ is weakly $(p,p)$-orderable by 
Corollary \ref{theorem:filter-sp-suff-cond-prod-wo}.
\end{remark}

\begin{remark}
\label{remark:c-cpt-in-Gutev's-thm}
In Theorem \ref{thm:Gutev}, the assumption that $X$ is countably compact cannot be dropped. Indeed, by Example \ref{example:psi<a<card}, there exists a weakly $p$-orderable filter space $Z_p$ such that $\psi(Z_p)=\omega_1$.
By 
Corollary \ref{theorem:filter-sp-suff-cond-prod-wo}, $Z_p\times Z_p$ is weakly $(p,p)$-orderable (and thus, weakly orderable), while $Z_p$ is not metrizable since $\psi(Z_p)=\omega_1$.
\end{remark}

By applying Lemma \ref{lemma103:product-selection} and Fact \ref{fact:partition-wo}, we can get 
an alternative
proof of Theorem \ref{lemma:filter-sp-suff-cond-wo}.

\begin{proof}[Another proof of Theorem \ref{lemma:filter-sp-suff-cond-wo}]
To show the ``if'' part, 
let  $\mathcal{P}$ be a nested subfamily of $p$ with $\bigcap\mathcal{P} = \emptyset$.
By  Fact \ref{fact:partition-wo}, there are an ordinal $\gamma$ and $\{P_\alpha :\alpha < \gamma  \} \subset \mathcal{P}$ such that 
$P_\alpha \subsetneq \bigcap_{\beta<\alpha} P_\beta$ 
for every  $\alpha < \gamma$ and $\bigcap_{\alpha < \gamma}P_\alpha = \emptyset$.  
For  $\alpha<\gamma$, let $W_\alpha= (\bigcap_{\beta <\alpha} P_\beta) \setminus P_\alpha $, where $W_0= 
X \setminus P_0  $.
Then the point $p \in X_{p}$ and the family $\{W_\alpha :\alpha < \gamma  \}$ satisfy the conditions (i)--(v) in Lemma \ref{lemma103:product-selection}.
Indeed, (\ref{103:1}) and (\ref{103:3}) are clear, and (\ref{103:4}) and (\ref{103:5}) can be shown by the same argument as in the proof of Lemma \ref{lemma:filter-sp-suff-cond-prod-wo}.
(\ref{103:2}) follows from the fact that each $W_\alpha$ is a discrete subspace of $X_p$.
Thus, $p$ and  $\{W_\alpha :\alpha < \gamma  \}$ satisfy the conditions (i)--(v) in Lemma \ref{lemma103:product-selection}, and hence $X_p$ is weakly $p$-orderable.
  
To show the ``only if'' part,
assume that $X_p$ is weakly $p$-orderable by a linear order $\preceq$ such that $p$ is $\preceq$-maximal.
Then $(x,p)_\preceq \in p$ since $(x,p]_\preceq$ is an open neighborhood of $p$. Thus $\{(x,p)_\preceq : x \in X \}$ is a nested subfamily of $p$ satisfying $\bigcap_{x \in X} (x,p)_\preceq =\emptyset$.
\end{proof}

\section{Separately continuous weak selections on products of suborderable spaces}
\label{section:necessary-conditions}

Let us 
recall
a
theorem of Bula \cite{bula:1986}
which 
generalizes
a result of Engelking and Lutzer~\cite{engelking-lutzer:1977}.

\begin{theorem}[{\cite[Corollary 4]{bula:1986}}]
\label{theorem0:product-selection}
Let $X$ be an image of a suborderable space under a closed continuous map.
Then $X$ is not hereditary paracompact if and only if it contains a subset which is homeomorphic to a stationary subset of an uncountable regular cardinal.
\end{theorem}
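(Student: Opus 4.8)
The plan is to prove the equivalence by establishing each direction separately, recognizing this as a theorem of Bula that I will treat as a self-contained citation-backed result. The statement asserts: for $X$ a closed continuous image of a suborderable space, $X$ fails to be hereditarily paracompact if and only if $X$ contains a homeomorphic copy of a stationary subset of an uncountable regular cardinal. The easy direction is ``if''. Stationary subsets of uncountable regular cardinals are the prototypical examples of non-paracompact spaces: if $S$ is stationary in a regular uncountable $\kappa$, then $S$ is countably compact but not compact (by stationarity, every continuous function to a countable discrete space is eventually constant, so $S$ is not Lindel\"of, yet it is countably compact), hence not paracompact. Since paracompactness is not inherited by arbitrary subspaces in general, the real content in this direction is that a subspace homeomorphic to such an $S$ witnesses the failure of \emph{hereditary} paracompactness directly, as $S$ itself is the non-paracompact subspace. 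So ``if'' reduces to the classical fact that stationary subsets of regular uncountable cardinals are not paracompact.

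The substantive direction is ``only if''. Here I would lean on the structure theory of generalized ordered (GO) spaces and their closed continuous images. First I would reduce to the suborderable case itself: by Bula's underlying analysis, the property of containing a stationary-set copy and the (hereditary) paracompactness behavior are controlled by a preimage argument, so the heart of the matter is understanding non-paracompact subspaces of orderable spaces. The key structural input is the Engelking--Lutzer characterization: a GO-space (equivalently, a suborderable space) is paracompact if and only if it contains no closed subspace homeomorphic to a stationary subset of a regular uncountable cardinal. The plan is to first prove the result for suborderable $X$ using Engelking--Lutzer, upgrading ``closed subspace'' to merely ``subspace'' by noting that within an ordered setting a non-paracompact subspace can be arranged to contain such a stationary copy as a relatively closed, hence homeomorphically embedded, piece.

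The transition from the suborderable case to the closed-continuous-image case is where Bula's contribution lies, and this will be the main obstacle in a ground-up proof. The difficulty is that closed continuous images can destroy the linear-order structure, so one cannot argue directly inside $X$; instead one must pull information back along the closed map $f\colon L \to X$, where $L$ is suborderable. The strategy would be: given that $X$ is not hereditarily paracompact, locate a non-paracompact subspace $A \subseteq X$, pull it back to $f^{-1}(A) \subseteq L$ (a subspace of a suborderable space, still suborderable), apply the Engelking--Lutzer characterization to extract a stationary copy $S$ inside $L$, and then show that the closed map $f$ transports $S$ to a homeomorphic copy inside $X$. The delicate point is controlling the fibers of $f$ over the stationary set: one must use the closedness of $f$ together with the countable compactness and the pressing-down (regressive function) properties of stationary sets to guarantee that $f$ restricts to a homeomorphism on a suitable stationary subset, rather than merely a continuous closed surjection that could collapse the order type.

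Since the paper explicitly frames this as a cited theorem (Corollary~4 of Bula~\cite{bula:1986}, generalizing Engelking--Lutzer~\cite{engelking-lutzer:1977}), I would expect the authors simply to invoke it; accordingly, the realistic ``proof'' here is a reference rather than an independent argument, and my reconstruction above is the sketch one would follow to verify the statement against the known GO-space machinery.
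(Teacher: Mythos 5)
The paper offers no proof of this statement at all: it is imported verbatim as \cite[Corollary 4]{bula:1986}, so the ``paper's approach'' is simply the citation, and your closing paragraph correctly identifies this. There is therefore nothing to compare at the level of actual argument, and had you stopped at ``invoke Bula,'' that would have matched the paper exactly.

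Since you did supply a sketch, however, one step in it is genuinely wrong. In the ``if'' direction you justify the non-paracompactness of a stationary subset $S$ of a regular uncountable $\kappa$ by claiming that $S$ is countably compact but not Lindel\"of. Stationary subsets of regular uncountable cardinals are in general \emph{not} countably compact: take $S\subset\omega_1$ stationary and co-stationary; the set $C$ of limit points of $S$ is a club, so we may pick $\alpha\in C\setminus S$ and an increasing sequence $(s_n)$ in $S$ with $\sup_n s_n=\alpha\notin S$, which is an infinite closed discrete subset of $S$. (Similarly, $\{\alpha<\omega_2:\cf(\alpha)=\omega_1\}$ is stationary in $\omega_2$ but contains closed discrete $\omega$-sequences.) So the inference ``countably compact $+$ not Lindel\"of $\Rightarrow$ not paracompact'' has a false premise here. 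The correct route to the classical fact that stationary sets are never paracompact is a direct pressing-down argument applied to a locally finite refinement of the cover $\{[0,\alpha)\cap S:\alpha<\kappa\}$ --- exactly the technique the paper deploys in the adjacent Lemma \ref{proposition2:product-selection} to show $a(S)<\kappa$. Your outline of the ``only if'' direction correctly locates the real difficulty (transporting a stationary copy back and forth along the closed map without collapsing it), but leaves that step entirely to Bula, which is consistent with treating the theorem as cited.
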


\begin{lemma}
\label{proposition2:product-selection}
Let $S$ be a stationary subset of an uncountable regular cardinal $\kappa$.
Then $a(S)<\kappa$.
\end{lemma}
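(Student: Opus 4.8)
The plan is to exhibit a single non-isolated point $p$ of $S$ that can be approached by a subset of $S$ of cardinality strictly less than $\kappa$; since $a(S)$ is the minimum of $a(p,S)$ taken over all non-isolated points $p$ of $S$, this will immediately yield $a(S)<\kappa$. First I would recall that, in the ordinal space $\kappa$, a point $p$ is non-isolated in the subspace $S$ precisely when $p$ is a limit ordinal with $\sup(S\cap p)=p$; equivalently, $p$ belongs to $S$ and to the derived set $S'$ consisting of all limit points of $S$ in $\kappa$.

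The key step is to locate such a $p$ with controllably small cofinality. I would first verify that $S'$ is a club in $\kappa$. It is closed because the set of limit points of any subset of an ordinal is closed in the order topology; and it is unbounded because $S$ is itself unbounded (every stationary subset of a regular cardinal is unbounded) and $\kappa$ is regular and uncountable. Concretely, given any $\alpha<\kappa$, one builds a strictly increasing $\omega$-sequence of elements of $S$ lying above $\alpha$, whose supremum is below $\kappa$ by regularity and is a limit point of $S$ above $\alpha$. Since $S$ is stationary, it meets every club, so $S\cap S'\neq\emptyset$, and we may fix some $p\in S\cap S'$.

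The crucial observation, which makes the cofinality of the chosen point automatically small, is that every ordinal $p<\kappa$ satisfies $\cf(p)\le p<\kappa$. Thus $p$ is a limit ordinal with $\cf(p)<\kappa$ and with $S\cap p$ cofinal in $p$. I would then select a subset $A\subset S\cap p$ that is cofinal in $p$ and of cardinality $\cf(p)$. By construction $A\subset S\setminus\{p\}$ with $|A|=\cf(p)<\kappa$, and $p\in\overline{A}$ since $A$ is cofinal in the limit ordinal $p$. Hence $a(p,S)\le\cf(p)<\kappa$, and therefore $a(S)\le a(p,S)<\kappa$, as required.

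I do not anticipate a serious obstacle here: the only points demanding care are the verification that $S'$ is a club (closedness, together with unboundedness derived from the regularity and uncountability of $\kappa$) and the elementary but decisive remark that no ordinal below $\kappa$ can have cofinality $\kappa$. Once these are in place, the stationarity of $S$ supplies the desired limit point $p$, and the bound on $a(S)$ follows directly from the definition of the approaching number.
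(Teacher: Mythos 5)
Your proof is correct, but it takes a genuinely different route from the paper's. The paper argues by contradiction: assuming $a(S)=\kappa$, every initial segment $[0,\alpha)_\le\cap S$ is closed in $S$, which yields a regressive function $h$ with $(h(\alpha),\alpha]_\le\cap S=\{\alpha\}$; the pressing down lemma then produces two points $\alpha_1<\alpha_2$ forced to coincide. You instead give a direct construction: the derived set $S'$ of limit points of $S$ is a club (closedness is automatic, unboundedness uses regularity and uncountability of $\kappa$), so stationarity yields $p\in S\cap S'$, and any cofinal $A\subset S\cap p$ of cardinality $\cf(p)\le p<\kappa$ witnesses $a(p,S)\le\cf(p)<\kappa$. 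All the steps check out, including the point that $p$ is genuinely non-isolated in $S$ so that $a(p,S)$ is defined. Your argument is lighter — it avoids Fodor's lemma entirely, using only that a stationary set meets the club of its own limit points — and it gives slightly more information, namely the explicit bound $a(S)\le\min\{\cf(p):p\in S\cap S'\}$, which can be as small as $\omega$. The paper's pressing-down argument is the more standard reflex for statements about stationary sets but uses strictly heavier machinery here.
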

\begin{proof}
Suppose for contradiction that $a(S)=\kappa$.
Let  $\alpha \in S$.
Then $[0,\alpha)_\leq\cap S$ is closed in $S$ since $|[0,\alpha)_\leq\cap S|<\kappa =a(S)$ by our assumption.
Therefore, there exists $h(\alpha )<\alpha$ such that 
$[0,\alpha)_\leq\cap S \cap (h(\alpha),\alpha]_\leq =\emptyset$.
Then $(h(\alpha),\alpha]_\leq\cap S=\{\alpha\}$.
This shows that there exits a map $h : S \to \kappa$ such that 
$h(\alpha) < \alpha $ and $(h(\alpha),\alpha]_\leq\cap S=\{\alpha\}$. 
By the pressing down lemma \cite[Chapter II, Lemma 6.15]{kunen:1983}, 
there exist a stationary subset 
$S'\subset S$ and an ordinal $\beta<\kappa$ such that $h(\alpha)=\beta$ for all $\alpha\in S'$.
We can take $\alpha_1,\alpha_2\in S'$ with $\beta < \alpha_1<\alpha_2$.
Then $\alpha_1 \in (\beta , \alpha_2] \cap S = (h(\alpha_2), \alpha_2] \cap S =\{\alpha_2\}$, which contradicts $\alpha_1 \ne \alpha_2$.
\end{proof}

The following theorem 
removes the superfluous assumption 
$a(X)\leq \omega$ 
from
\cite[Theorem 3.1]{garcia-miyazaki-nogura:2013}.

\begin{theorem}
\label{theorem:necessary-cond-hered-para}
Let $X$ be an image of a suborderable space under a closed continuous map such that $X\times Y$ admits a separately continuous weak selection for some non-discrete space $Y$. Then $X$ is hereditarily paracompact.
\end{theorem}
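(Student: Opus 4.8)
The plan is to argue by contradiction, using Bula's characterization (Theorem \ref{theorem0:product-selection}) to replace ``hereditary paracompactness'' by a concrete obstruction, and then to destroy that obstruction by a pressing-down argument. Suppose $X$ is not hereditarily paracompact. Since $X$ is a closed continuous image of a suborderable space, Theorem \ref{theorem0:product-selection} gives a subspace $S\subseteq X$ homeomorphic to a stationary subset $\tilde S$ of some uncountable regular cardinal $\kappa$. As the restriction of a separately continuous weak selection to a subspace is again one (each half-interval $(\leftarrow,z)_{\preceq_\sigma}$, $(z,\rightarrow)_{\preceq_\sigma}$ is open, so its trace on the subspace is open), the product $S\times Y$ inherits a separately continuous weak selection. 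Thus it suffices to prove the following: \emph{no product $S\times Y$ of a stationary subset $S$ of an uncountable regular cardinal $\kappa$ with a non-discrete space $Y$ admits a separately continuous weak selection.}

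Before the main argument I would extract the two cardinal inequalities that drive it. First, $S$ is non-discrete (a stationary subset of a regular uncountable cardinal meets its own set of limit points in a stationary set), so Lemma \ref{proposition2:product-selection} yields $a(S)<\kappa$. Second, applying Theorem \ref{corollary101:product-selection} to the product $Y\times S$ (whose second factor $S$ is non-discrete) gives $\psi(Y)\le a(S)<\kappa$. Hence the non-isolated point $q$ of $Y$ is a $G_{<\kappa}$-point: fix a pseudobase $\{U_i:i<\mu\}$ of open neighborhoods of $q$ with $\bigcap_{i<\mu}U_i=\{q\}$ and $\mu=\psi(q,Y)<\kappa$. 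This is the key gain over Theorem \ref{theorem1:product-selection}: that theorem handles the case $a(Y)<\kappa$ outright (take its non-discrete factor to be $Y$ and its stationary factor to be $S$), but we must cover arbitrary $Y$, and the role played there by a small approaching set of the non-stationary factor will here be played by the pseudobase $\{U_i\}$, whose size $\mu<\kappa=\cf(\kappa)$ is exactly what makes a stabilization possible.

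The core is then a pressing-down argument modeled on the proof of Example \ref{exotic}. After shrinking $\tilde S$ to the stationary set of its own limit points, separate continuity of $\sigma$ at each point $(\alpha,q)$, together with the fact that $q$ is not $\tau_{\preceq_\sigma}$-isolated in the fiber $\{\alpha\}\times Y$, produces a point $y_\alpha$ in that fiber with $(\alpha,y_\alpha)\prec_\sigma(\alpha,q)$ (or the symmetric inequality) and a basic neighborhood $\bigl((\eta_\alpha,\alpha]_{\le}\cap\tilde S\bigr)\times U_{i_\alpha}$ of $(\alpha,q)$ lying inside the appropriate open half-interval determined by $(\alpha,y_\alpha)$, where $\eta_\alpha<\alpha$ and $i_\alpha<\mu$. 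The map $\alpha\mapsto\eta_\alpha$ is regressive, so the pressing-down lemma \cite[Chapter~II, Lemma~6.15]{kunen:1983} stabilizes it to a constant $\eta^{*}$ on a stationary set; since $\mu<\cf(\kappa)$, a further pigeonhole stabilizes $i_\alpha$ to a constant $i^{*}$, yielding a single neighborhood $V^{*}=U_{i^{*}}$ of $q$ and a single bound $\eta^{*}$ that work simultaneously for stationarily many $\alpha$.

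Finally I would evaluate these stabilized neighborhoods at pairs $\alpha<\beta$ drawn from the stabilized stationary set (both above $\eta^{*}$), using that $q\in V^{*}$, to obtain comparisons such as $(\alpha,q)\prec_\sigma(\beta,y_\beta)$; combining this with the behaviour of $\sigma$ on the fiber $\tilde S\times\{q\}$ --- itself a separately continuous weak selection on a stationary set, which a second pressing-down shows coincides on a club with the natural order or its reverse --- should force two points each lying in the other's $\preceq_\sigma$-upper set, contradicting antisymmetry of the selection relation. The main obstacle is precisely this last combinatorial closure: the neighborhoods delivered by separate continuity reach only \emph{downward} in the ordinal coordinate, so a naive pairing gives only a one-sided inequality. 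Overcoming this requires orchestrating the accumulation side of $q$ in the fibers together with the order type of $\sigma$ on $\tilde S\times\{q\}$ so that the two inequalities point in opposite directions, exactly as the $\Delta$-system bookkeeping does in the proof of Example \ref{exotic}.
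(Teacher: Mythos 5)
Your proposal tracks the paper's proof exactly through the reduction: argue by contradiction, use Bula's theorem (Theorem \ref{theorem0:product-selection}) to find a copy of a stationary set $S$ in an uncountable regular cardinal $\kappa$ inside $X$, note that $S\times Y$ inherits a separately continuous weak selection, and invoke Lemma \ref{proposition2:product-selection} to get $a(S)<\kappa$. At that point the paper is done: it simply cites Theorem \ref{theorem1:product-selection} to conclude that $S\times Y$ admits no separately continuous weak selection. You instead judge that Theorem \ref{theorem1:product-selection} only covers the case $a(Y)<\kappa$ and set out to prove the non-existence statement for arbitrary non-discrete $Y$ by hand. (Your reading is the literal one: in Theorem \ref{theorem1:product-selection} the hypothesis $a(\cdot)<\kappa$ is imposed on the non-stationary factor, whereas the paper verifies it for the stationary factor $S$; this mismatch is worth querying, but it is emphatically not the route the paper takes.)

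The difficulty is that your replacement argument is not actually carried out, so as a proof it has a genuine gap. Two concrete points. First, the closing paragraph is a plan, not a derivation: ``should force two points each lying in the other's $\preceq_\sigma$-upper set'' and ``the main obstacle is precisely this last combinatorial closure \dots\ overcoming this requires orchestrating \dots'' name the missing step without supplying it, and that step --- turning two one-sided comparisons, both of which reach only downward in the ordinal coordinate, into a violation of antisymmetry --- is the entire content of such arguments. Second, the stabilization over the pseudobase is unsound: $\psi(q,Y)\le a(S)<\kappa$ yields a family $\{U_i:i<\mu\}$ of open sets with $\bigcap_{i<\mu}U_i=\{q\}$, which is a pseudobase, not a neighborhood base at $q$. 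Separate continuity at $(\alpha,q)$ produces a basic neighborhood $\bigl((\eta_\alpha,\alpha]_{\le}\cap\tilde S\bigr)\times V_\alpha$ with $V_\alpha$ an \emph{arbitrary} open neighborhood of $q$; there is no reason $V_\alpha$ contains, or may be replaced by, some $U_{i_\alpha}$, so the pigeonhole over $i<\mu$ has nothing to act on. What the pressing-down/$\Delta$-system machinery of Example \ref{exotic} actually needs is a set of size $<\kappa$ approaching $q$, i.e.\ $a(q,Y)<\kappa$ --- exactly the hypothesis of Theorem \ref{theorem1:product-selection} you were trying to dispense with. Either justify the application of Theorem \ref{theorem1:product-selection} as the paper does, or complete the combinatorial argument; as written the proposal does neither.
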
  

\begin{proof}
Suppose for contradiction that $X$ is not hereditarily paracompact. 
By Theorem \ref{theorem0:product-selection}, $X$ contains a subset which is homeomorphic to a stationary subset $S$ of an uncountable regular cardinal $\kappa$.
Since $X\times Y$ has a separately continuous weak selection for some non-discrete space $Y$ by our assumption,
its subspace $S\times Y$ also admits a separately continuous weak selection.
On the other hard, since $a(S)<\kappa$ by Lemma \ref{proposition2:product-selection}, 
Theorem \ref{theorem1:product-selection} implies that $S\times Y$ does not admit a separately continuous weak selection.
\end{proof}

From Theorems \ref{corollary101:product-selection}, \ref{tot:disc}
and \ref{theorem:necessary-cond-hered-para},
we get the following

\begin{corollary}
\label{corollary112:product-selection}
Let $X$ be a non-discrete image of a suborderable space under a closed continuous map. If $X\times X$ admits a separately continuous weak selection, then $X$ is hereditarily paracompact, totally disconnected 
and satisfies the inequality
$\psi(X)\leq a(X)$.
\end{corollary}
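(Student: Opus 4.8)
The plan is to observe that this corollary is a direct consequence of the three cited theorems, obtained by specializing the ``auxiliary'' non-discrete factor $Y$ to be $X$ itself. The key — and only — point that makes this work is that the hypothesis that $X$ is \emph{non-discrete} allows us to set $Y=X$ and still satisfy the standing requirement, common to all three theorems, that the second factor of the product be non-discrete.

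First I would verify this reduction explicitly. Since $X$ is non-discrete by hypothesis, the space $Y=X$ is a legitimate non-discrete factor, and $X\times X$ admits a separately continuous weak selection by assumption. With these two facts in hand, each of the three conclusions follows by a single invocation:
\begin{itemize}
\item Applying Theorem \ref{theorem:necessary-cond-hered-para} with $Y=X$ (using also that $X$ is a closed continuous image of a suborderable space) yields that $X$ is hereditarily paracompact;
\item Applying Theorem \ref{tot:disc} with $Y=X$ yields that $X$ is totally disconnected;
\item Applying Theorem \ref{corollary101:product-selection} with $Y=X$ yields $\psi(X)\le a(Y)=a(X)$.
\end{itemize}

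I do not expect any genuine obstacle here: all the substantive work has already been carried out in establishing Theorems \ref{corollary101:product-selection}, \ref{tot:disc} and \ref{theorem:necessary-cond-hered-para}, and the corollary merely harvests their consequences under the self-product hypothesis $X\times X$. The one thing worth stating carefully in the write-up is precisely why $Y=X$ is an admissible choice, namely that the non-discreteness assumption on $X$ supplies exactly the non-discreteness of the second factor demanded by each theorem; this is the hinge on which the whole argument turns, and it is the sole place where the standing hypothesis ``$X$ is non-discrete'' is used.
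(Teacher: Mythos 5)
Your proposal is correct and is precisely the argument the paper intends: the corollary is stated as an immediate consequence of Theorems \ref{corollary101:product-selection}, \ref{tot:disc} and \ref{theorem:necessary-cond-hered-para}, each applied with $Y=X$, where the non-discreteness of $X$ supplies the non-discrete second factor required by all three. No further comment is needed.
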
  

In the next two remarks we use an observation that every filter space is totally disconnected (even zero-dimensional) and hereditarily paracompact.

\begin{remark}
\label{question50:product-selection}
The converse of Corollary \ref{corollary112:product-selection} does not hold.
Indeed, there exists a hereditarily paracompact totally disconnected space $Z_p$ such that $\psi (Z_p) \leq a(Z_p)$, yet
$Z_p$ does not admit a separately continuous weak selection; see Example \ref{exotic}.
\end{remark} 

\begin{remark}
\label{example:necessary-cond-examples}
Hereditary paracompactness, total disconnectedness and the
inequality
$\psi(X) \leq a(X)$ are independent of each other in the realm of orderable spaces.
Indeed, 
$\omega_1$ is a totally disconnected non-(hereditarily) paracompact space such that $\psi(\omega_1)=a(\omega_1)=\omega$.
The real line $\mathbb{R}$ is a hereditarily paracompact non-totally disconnected space such that $\psi(\mathbb{R}) =a(\mathbb{R})=\omega$.
There is 
a totally disconnected, hereditarily paracompact 
orderable
space $Z_p$
satisfying
$a (Z_p) <\psi(Z_p)$; see Example \ref{example:a<psi<card}.
\end{remark}

\begin{corollary}
\label{corollary102:product-selection}
If $X$ is a non-discrete orderable space such that
$X\times X$ admits a separately continuous weak selection, then $\psi (X)=a(X)$.
\end{corollary}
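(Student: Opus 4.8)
The plan is to obtain the equality $\psi(X)=a(X)$ by proving the two inequalities $\psi(X)\le a(X)$ and $a(X)\le\psi(X)$ separately, each being supplied by a result already available in the excerpt.

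First I would observe that an orderable space is in particular suborderable (it is a subspace of itself), and hence it is trivially a closed continuous image of a suborderable space via the identity map. Therefore, since $X$ is non-discrete and $X\times X$ admits a separately continuous weak selection, Corollary~\ref{corollary112:product-selection} applies directly and yields $\psi(X)\le a(X)$.

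For the reverse inequality I would exploit orderability a second time. Since $X$ is orderable, one has $\psi(X)=t(X)$ by \cite[3.12.4~(d)]{engelking:1989}. Combining this with the general inequality $a(X)\le t(X)$ recorded in \eqref{a:t}, we obtain $a(X)\le t(X)=\psi(X)$. Putting the two inequalities together gives $\psi(X)=a(X)$, as desired.

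There is no genuine obstacle here: the entire content lies in recognizing that orderability simultaneously supplies the hypotheses of Corollary~\ref{corollary112:product-selection} (through the elementary fact that $X$ is a closed continuous image of the suborderable space $X$) and the equality $t(X)=\psi(X)$, after which the conclusion is immediate. The only point deserving a moment's care is this trivial verification that $X$ qualifies as a closed continuous image of a suborderable space, so that Corollary~\ref{corollary112:product-selection} is genuinely applicable; everything else is a direct combination of previously established inequalities.
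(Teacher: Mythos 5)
Your proof is correct and follows essentially the same route as the paper: the inequality $a(X)\le\psi(X)$ is obtained exactly as in the paper via $\psi(X)=t(X)$ and \eqref{a:t}, while $\psi(X)\le a(X)$ is the content of Theorem \ref{corollary101:product-selection} with $Y=X$, which is also the source of that inequality in Corollary \ref{corollary112:product-selection}. Your detour through Corollary \ref{corollary112:product-selection} (and the observation that $X$ is a closed continuous image of itself) is harmless but unnecessary, since Theorem \ref{corollary101:product-selection} applies directly without any suborderability hypothesis.
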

\begin{proof}
Since $X$ is orderable, 
$\psi (X)=t(X)$; see \cite[3.12.4 (d)]{engelking:1989}.
Since $a(X)\le t(X)$ by \eqref{a:t}, we get
$a(X)\le \psi(X)$.
The inverse inequality $\psi(X)\le a(X)$ follows from Theorem \ref{corollary101:product-selection}.
\end{proof}

\section{Questions}
\label{section:questions}

It is unclear if Theorem \ref{thm:Gutev} remains valid for separately continuous weak selections:

\begin{question}
If the square $X\times X$ of a regular countably compact space $X$ has a separately continuous weak selection, must $X$ be compact and metrizable?
\end{question}

It follows from 
Corollary \ref{ultrafilter:is:weakly:orderable} and Proposition
\ref{proposition:ultra-not-subord} that the first implication 
in \eqref{four:properties:for:filter:spaces} is not reversible for filter spaces. To the best of our knowledge,
it is unknown if the second implication in \eqref{four:properties:for:filter:spaces} is reversible for filter spaces.

\begin{question}
Is every filter space with a continuous weak selection  weakly orderable?
\end{question}

Every weakly $p$-orderable filter space $X_p$ is (trivially) weakly orderable and satisfies the inequality $\psi(X_p)\le a(X_p)$ by Proposition \ref{proposition:p-ord->psi<a}; see also Diagram 1.
One may wonder if these two properties characterize weak $p$-orderability of a filter space $X_p$.

\begin{question}
\label{question6:product-selection}
Is every weakly orderable filter space $X_{p}$ satisfying 
$\psi(X_p)\le a(X_p)$ weakly $p$-orderable?
\end{question}

It is unclear if the converse of Corollary 
\ref{corollary102:product-selection} holds.

\begin{question}
\label{square:of:lots:question}
Let $X$ be a hereditarily paracompact totally disconnected orderable space satisfying $\psi(X) = a(X)$. Does $X \times X$ admit a (separately) continuous weak selection?
\end{question}

\section*{Acknowledgments}

The authors would like to thank the referee for careful
reading of the manuscript and helpful comments.

\end{document}